\documentclass[12pt, a4paper, headings=standardclasses, openany, bibliography=totocnumbered]{amsart}

\usepackage[USenglish]{babel}
\usepackage{amsmath, amssymb, amsfonts, amsthm, mathtools, bbm}
\usepackage[style=alphabetic,backend=biber]{biblatex}
\usepackage{hyperref}
\usepackage{cleveref}
\usepackage{tikz-cd}
\usepackage{faktor}
\usepackage{enumitem}
\usepackage{tikz}
\usetikzlibrary{matrix,decorations.pathreplacing}
\usetikzlibrary{shapes.geometric,calc}
\usetikzlibrary{arrows.meta,decorations.markings}
\usetikzlibrary{positioning}

\tikzset{
	dot/.style={circle, fill=black, inner sep=0pt, outer sep=0pt, minimum size=3pt}
}

\newcommand{\seq}[1]{%
	\begin{tikzpicture}[baseline=-0.6ex]
		\foreach \x [count=\i] in {#1} {
			\ifnum\x=1
			\node[dot] at (0.25*\i,0) {};
			\else
			\node[dot] at (0.25*\i,0) {};
			\node[dot] at (0.25*\i,0.18) {};
			\fi
		}
	\end{tikzpicture}%
}

\usepackage{ytableau} %for Young diagrams

\ytableausetup{centertableaux, boxsize=2em}

\AtBeginBibliography{\footnotesize} 

\newtheorem{theo}{Theorem}
\numberwithin{theo}{section}
\newtheorem{lemm}[theo]{Lemma}

\newtheorem{prop}[theo]{Proposition}
\newtheorem{koro}[theo]{Corollary}
\newtheorem*{thmA}{Theorem B}
\newtheorem*{thmB}{Theorem D}

\theoremstyle{definition}
\newtheorem{defi}[theo]{Definition}
\newtheorem{beis}[theo]{Example}
\newtheorem*{defA}{Definition A}
\newtheorem*{beisA}{Example C}
\newtheorem{rema}[theo]{Remark}
\newtheorem*{rema*}{Remark}

\newtheorem{warn}[theo]{Warning}

\newtheorem{nota}[theo]{Notation}

\DeclareMathOperator{\Sym}{Sym} %symmetric group
\newcommand{\Rep}{\operatorname{Rep}_k} %category of representations
\newcommand{\blank}{{-}} %blank
\def\acts{\curvearrowright} %group action
\newcommand{\N}{\mathbb{N}} %natural numbers
\newcommand{\mN}{\mathbb{N}_{0}} %natural numbers with zero
\newcommand{\mQ}{\mathbb{Q}} %rational numbers
\newcommand{\cC}{{\mathcal{S}}} %category C
\newcommand{\cF}{{\mathcal{F}}} %filtration F
\newcommand{\cS}{{\mathbb{X}}} %graph S
\newcommand{\cs}{{\mathsf{x}}} %finite graph S
\newcommand{\ca}{\mathsf{a}} %amalgamation a
\DeclareMathOperator{\Hom}{Hom}
\DeclareMathOperator{\End}{End}
\newcommand{\id}{\mathrm{id}}
\newcommand{\cO}{{\mathcal{O}}} %category O or orbits
\DeclareMathOperator{\age}{age} %age
\newcommand{\am}{\cs_1\cup_{a, \cs_0}\cs_2} %amalgamation
\DeclareMathOperator{\Aut}{Aut} %automorphism group
\newcommand{\gr}{\age(\cS)} %class of graphs
\renewcommand{\sl}{\mathfrak{sl}} %special linear Lie algebra
\DeclareMathOperator{\Maps}{Maps} %Maps
\newcommand{\spann}{\mathrm{span}} %linear span
\DeclareMathOperator{\pt}{pt} %one point
\newcommand{\cbox}[1]{ \vcenter{ \hbox{#1} } }
\DeclareMathOperator{\M}{M} %matrices
\newcommand{\C}{\mathbb{C}}
\DeclareMathOperator{\pa}{Part}
\newcommand{\gl}{\mathfrak{gl}}
\DeclareMathOperator{\GL}{GL}

\newcommand{\Perm}{\underline{\operatorname{Perm}}(G; \mu)}
\renewcommand{\H}{\mathsf{H}}
\newcommand\restr[2]{{% we make the whole thing an ordinary symbol
		\left.\kern-\nulldelimiterspace % automatically resize the bar with \right
		#1 % the function
		\vphantom{\big|} % pretend it's a little taller at normal size
		\right|_{#2} % this is the delimiter
}} %restriction of functions
\renewcommand{\acts}{\circlearrowright}

\begin{document}
	\title[Lie algebra actions for oligomorphic groups]{Infinite sequences via Lie algebra actions for oligomorphic groups}
	%\author{Zbigniew Wojciechowski}
	\author[Z. Wojciechowski]{Zbigniew Wojciechowski}
	\address{Institut für Algebra, Technische Universität Dresden, Germany}
	\email{zbigniew.wojciechowski@tu-dresden.de}
	\maketitle
	\begin{abstract}
		Many integer sequences arise as numbers of $G$-orbits on $\binom{X}{n}$ as $n$ varies, for a permutation group $G\subseteq \Sym(X)$. For finite $X$, Stanley proved that these finite sequences increase towards the middle using an action of the Lie algebra $\sl_2(\C)$.
		For infinite sets $X$, and hence infinite sequences, Cameron provided an argument for monotonicity by identifying orbits with a vector space basis of the orbit algebra $\H_{G,X}^{\star}$, and proving injectivity of a certain operator $\H_{G,X}^{\star}\to \H_{G,X}^{\star+1}$. In this paper we generalize Stanley's approach to oligomorphic groups by extending Cameron's operator to a full $\sl_2(\C)$-action on $\H_{G,X}^{\star}$. As intermediate step, we define for every oligomorphic permutation group $G\subseteq \Sym(X)$ the \mbox{$X$-th} tensor power $(k^r)^{\otimes X}$, generalizing work of Entova-Aizenbud. We show that this space carries natural commuting actions of $G$ and the Lie algebra $\gl_r(k)$, the latter depending on a Harman--Snowden measure $\mu$ on $G$. 
		We show that $\H_{G,X}^{\star}\subseteq (\C^2)^{\otimes X}$ has a filtration by lowest weight Verma modules. We explain how our approach applies to Fibonacci numbers, Tribonacci numbers, etc.\ by constructing measures on products with $(\mQ,<)$. 
	\end{abstract}

\begin{figure}[ht]
	\centering
	
	\begin{tikzpicture}[
		element/.style={inner sep=2pt, outer sep=2pt},
		>=Stealth,
		arrow/.style={red!60!black, ->, double=white, double distance=1.8pt, line width=0.6pt},yscale=1.1
		]
		
		% Shift everything to start at x=0.5cm from left edge
		\begin{scope}[xshift=0.5cm]
			
			% Column spacing: 2.3cm
			% Increased vertical spacing for better number placement
			
			% Column 0: n=0 at x=0
			\node[element] (n0) at (0,0) {$\emptyset$};
			
			% Column 1: n=1 at x=2.3
			\node[element] (n1-1) at (2.3,0) {\seq{1}};
			
			% Column 2: n=2 at x=4.6 (increased vertical spacing: 1.5 instead of 1.2)
			\node[element] (n2-1) at (4.6,1.5) {\seq{2}};
			\node[element] (n2-2) at (4.6,-1.5) {\seq{1,1}};
			
			% Column 3: n=3 at x=6.9 (increased vertical spacing: 2.3 instead of 1.8)
			\node[element] (n3-1) at (6.9,2.3) {\seq{2,1}};
			\node[element] (n3-2) at (6.9,0) {\seq{1,2}};
			\node[element] (n3-3) at (6.9,-2.3) {\seq{1,1,1}};
			
			% Column 4: n=4 at x=9.2 (increased vertical spacing: 3.0 instead of 2.4)
			\node[element] (n4-1) at (9.2,3.0) {\seq{2,2}};
			\node[element] (n4-2) at (9.2,1.5) {\seq{2,1,1}};
			\node[element] (n4-3) at (9.2,0) {\seq{1,2,1}};
			\node[element] (n4-4) at (9.2,-1.5) {\seq{1,1,2}};
			\node[element] (n4-5) at (9.2,-3.0) {\seq{1,1,1,1}};
			
			% Draw all edges from left (smaller n) to right (larger n)
			
			% From n=0 to n=1
			\draw[arrow] (n0) -- (n1-1);
			
			% From n=1 to n=2
			\draw[arrow] (n1-1) -- (n2-1);
			\draw[arrow] (n1-1) -- (n2-2);
			
			% From n=2 to n=3
			\draw[arrow] (n2-1) -- (n3-1);
			\draw[arrow] (n2-1) -- (n3-2);
			\draw[arrow] (n2-2) -- (n3-1);
			\draw[arrow] (n2-2) -- (n3-2);
			\draw[arrow] (n2-2) -- (n3-3);
			
			% From n=3 to n=4
			% Adding a 1 to [2,1]
			\draw[arrow] (n3-1) -- (n4-2);
			\draw[arrow] (n3-1) -- (n4-3);
			% Changing 1 to 2 in [2,1]
			\draw[arrow] (n3-1) -- (n4-1);
			
			% Adding a 1 to [1,2]
			\draw[arrow] (n3-2) -- (n4-3);
			\draw[arrow] (n3-2) -- (n4-4);
			% Changing 1 to 2 in [1,2]
			\draw[arrow] (n3-2) -- (n4-1);
			
			% Adding a 1 to [1,1,1]
			\draw[arrow] (n3-3) -- (n4-5);
			% Changing a 1 to 2 in [1,1,1]
			\draw[arrow] (n3-3) -- (n4-2);
			\draw[arrow] (n3-3) -- (n4-3);
			\draw[arrow] (n3-3) -- (n4-4);
			
			% Add numbers placed directly on top of each arrow with white background
			% n=0 -> n=1
			\node[fill=white, inner sep=1pt, font=\small] at ($(n0)!0.5!(n1-1)$) {1};
			
			% n=1 -> n=2
			\node[fill=white, inner sep=1pt, font=\small] at ($(n1-1)!0.5!(n2-1)$) {2};
			\node[fill=white, inner sep=1pt, font=\small] at ($(n1-1)!0.5!(n2-2)$) {2};
			
			% n=2 -> n=3
			\node[fill=white, inner sep=1pt, font=\small] at ($(n2-1)!0.5!(n3-1)$) {1};
			\node[fill=white, inner sep=1pt, font=\small] at ($(n2-1)!0.4!(n3-2)$) {1};
			\node[fill=white, inner sep=1pt, font=\small] at ($(n2-2)!0.4!(n3-1)$) {2};
			\node[fill=white, inner sep=1pt, font=\small] at ($(n2-2)!0.5!(n3-2)$) {2};
			\node[fill=white, inner sep=1pt, font=\small] at ($(n2-2)!0.5!(n3-3)$) {3};
			
			% n=3 -> n=4
			% [2,1] edges
			\node[fill=white, inner sep=1pt, font=\small] at ($(n3-1)!0.4!(n4-2)$) {2};
			\node[fill=white, inner sep=1pt, font=\small] at ($(n3-1)!0.6!(n4-3)$) {1};
			\node[fill=white, inner sep=1pt, font=\small] at ($(n3-1)!0.5!(n4-1)$) {2};
			
			% [1,2] edges
			\node[fill=white, inner sep=1pt, font=\small] at ($(n3-2)!0.4!(n4-3)$) {1};
			\node[fill=white, inner sep=1pt, font=\small] at ($(n3-2)!0.3!(n4-4)$) {2};
			\node[fill=white, inner sep=1pt, font=\small] at ($(n3-2)!0.3!(n4-1)$) {2};
			
			% [1,1,1] edges
			\node[fill=white, inner sep=1pt, font=\small] at ($(n3-3)!0.5!(n4-5)$) {4};
			\node[fill=white, inner sep=1pt, font=\small] at ($(n3-3)!0.3!(n4-2)$) {2};
			\node[fill=white, inner sep=1pt, font=\small] at ($(n3-3)!0.5!(n4-3)$) {2};
			\node[fill=white, inner sep=1pt, font=\small] at ($(n3-3)!0.5!(n4-4)$) {2};
		\end{scope}
	\end{tikzpicture}
	
	\caption{Cameron's operator $e$ on $\H_{G,X}^{\star}$ for Fibonacci}
	\label{the action of e}
\end{figure}

	\section{Introduction}
	\subsection{Motivation}
		 Many counting problems in algebraic combinatorics can be translated to counting the number $G\backslash 2^X$ of $G$-orbits on the power set of a finite $G$-set $X$ for some group $G$. Famous problems of this form involve counting the number of necklaces with $n$ beads of $2$ colors, the number of partitions fitting into a $m\times n$ rectangle, and the number of graphs with $n$ vertices (see \cite{reinernote} for a great survey). There one studies the rank generating function
		 $r_X(q)=\sum_{n=0}^{|X|} a_n q^n$ where $a_n=\left|G\backslash \binom{X}{n}\right|$. The coefficients $a_n$ of $r_X(q)$ are \textit{symmetric}, i.e.\ satisfy $a_n=a_{|X|-n}$, and \textit{unimodal}, i.e.\ increasing towards the middle $a_0\leq a_1\leq \cdots \leq a_{\left\lfloor\! \frac{|X|}{2}\!\right\rfloor}$, by a result of Stanley. His original argument in \cite{stanley80b,stanley80a} works as follows. He considers the action of the Lie algebra $\mathfrak{sl}_2(\C)$ on $\left(\C^{2}\right)^{\otimes |X|}$, which commutes with the action of $G$. He then considers the subspace of $G$-invariant vectors, which has a natural basis labeled by $G$-orbits. Since this subspace is stable under the $\sl_2(\C)$-action, one may decompose it into irreducible summands corresponding to finite sequences of the form $1,1,1,\ldots, 1$, which provides unimodality. This paper is concerned with permutation groups $G\subseteq \Sym(X)$ of possibly infinite sets $X$ for which $\binom{X}{n}$ admits finitely many $G$-orbits for all $n\in \mN$.
		 %, i.e.\ for which the formal power series $r_X(q)=\sum_{n\geq 0}a_n q^n\in \mN[[q]]$ is well-defined. 
		 Such $G\subseteq \Sym(X)$ is called oligomorphic. Many infinite integer sequences arise in this way, for instance, Fibonacci numbers, Catalan numbers, and numbers of partitions (see \cite{cameron00}).
		 Cameron shows in \cite[\S5]{cameron90} that provided $X$ is infinite the sequence $a_0,a_1,a_2,\ldots$ is monotonically increasing.
		 He first considers $\H_{G,X}^{\star}=\bigoplus_{n\in \mN}\Maps\left(\binom{X}{n}, k\right)^{G}$, which satisfies $\dim_k \H_{G,X}^n=\left|G\backslash \binom{X}{n}\right|$.
		 This space is naturally a commutative $k$-algebra via the intersection product. Cameron shows that the operator $\H_{G,X}^{\star}\to \H_{G,X}^{\star+1}$, which multiplies with the constant $1$-function from $\H_{G,X}^1$, is injective. This implies monotonicity of the sequence.
		 The goal of this paper is to put Stanley's and Cameron's approaches into a uniform framework by defining Lie algebra actions on orbit algebras for oligomorphic groups.
		 
		 \subsection{Results}
		 Let $k$ be a field. Let $G\subseteq \Sym(X)$ be an oligomorphic permutation group. We equip $G$ with the topology induced by the discrete topology on $X$. The following are the key definitions.
		 \begin{defA}[Definition \ref{definition par} and \ref{definition tensor power}]
		 	Consider the $G$-set
		 	\[
		 	\pa_r(X)\coloneqq  \left\{ (X_1,\cdots, X_r) \mid \substack{\exists U\subseteq G \text{ open with } U\cdot X_i\subseteq X_i, \\ \bigcup_{i=1}^{r}X_i=X, \, X_i\cap X_j=\emptyset \text{ for } i\neq j} \right\}.
		 	\]
		 	We define the \textit{$X$-th tensor power of $k^r$} as the Schwartz space
		 	\[
		 	(k^r)^{\otimes X}\coloneqq \cC(\pa_r(X))\coloneqq \{v\colon \pa_r(X)\to k \mid \exists U\overset{\text{open}}{\subseteq} G \colon U\cdot v=v\}.
		 	\]
		 \end{defA}
		 %Roughly speaking, the set $\pa_r(X)$ encodes ways to write $X$ as a union of $r$-many good subsets, and finite subsets are good. 
		 The Schwartz space in the context of oligomorphic groups was introduced in \cite{harmansnowden22}, and serves as a particularly well-behaved linearization. Most importantly its subspace of $G$-invariants has a natural basis labeled by $G$-orbits on $\pa_r(X)$.
		 If $X$ is finite, $(k^r)^{\otimes X}$ is the usual tensor power $(k^r)^{\otimes |X|}$, cf.\ Example \ref{example: finite Y}.
		 \begin{thmA}[Theorem \ref{theorem lie algebra action} and \ref{theorem: Stanley's argument}] \label{theorem B}
		 	Let $\mu$ be a measure of $G$ with values in $k$ in the sense of \cite{harmansnowden22}. 
		 	\begin{enumerate} 
		 		\item There is a natural action of the Lie algebra $\gl_r(k)\acts (k^r)^{\otimes X}$ depending on $\mu$, which commutes with the action of $G$.
		 		\item In the special case $r=2$, $k=\C$, this gives an action of $\sl_2(\C)\acts \H_{G,X}^{\star}$ on the orbit-algebra.
		 		If $X$ is infinite, then this representation has a (possibly infinite) ascending filtration whose subquotients are lowest weight $\sl_2(\C)$-Verma modules.
		 	\end{enumerate}
		 \end{thmA}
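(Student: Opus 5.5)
We define the action by giving explicit operators. For $1\le i,j\le r$ we construct $E_{ij}\in \End_k\big((k^r)^{\otimes X}\big)$ as an integral in the sense of the Harman--Snowden measure $\mu$. For $i=j$ this is multiplication by a ``size'' function. For $i\neq j$ it transfers a piece of $X_j$ into $X_i$. Concretely, for $v\in \cC(\pa_r(X))$ and $P=(X_1,\dots,X_r)$ we set
\[
(E_{ij}v)(P)=\int_{x\in X_i} v\big(X_1,\dots,X_i\setminus\{x\},\dots,X_j\cup\{x\},\dots,X_r\big)\,d\mu(x)\quad(i\neq j).
\]
The integral is over the $U$-set $X_i$, where $U$ is an open subgroup stabilizing $P$ and $v$. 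The integrand is $U_x$-smooth, so the integral is defined by Harman--Snowden integration over the finitely many $U$-orbits of $X_i$. For $E_{ii}$ we take multiplication by $\mu(X_i)$, interpreted as the measure of the $U$-set $X_i$.

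When $X$ is finite and $\mu$ is the counting measure, this recovers the usual operator $\sum_{x} 1\otimes\cdots\otimes e_{ij}\otimes\cdots$, as in Example \ref{example: finite Y}. First we check well-definedness. Independence of the choice of $U$ follows from the compatibility of $\mu$ with restriction to open subgroups. Smoothness of $E_{ij}v$ holds because the construction is $G$-natural. The same naturality gives $G$-equivariance, since $\mu$ is $G$-invariant.

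The main step is verifying the relations $[E_{ij},E_{kl}]=\delta_{jk}E_{il}-\delta_{li}E_{kj}$. For this we compose two integrals and rewrite the composite as an integral over pairs $(x,y)$. Using the Fubini property of $\mu$, we split the domain of $(x,y)$ into the diagonal $x=y$ and its complement. The off-diagonal parts of $E_{ij}E_{kl}$ and $E_{kl}E_{ij}$ agree by a change of variables (swap $x,y$), so they cancel in the commutator. The diagonal parts produce exactly $\delta_{jk}E_{il}-\delta_{li}E_{kj}$. For relations involving $E_{ii}$, additivity of $\mu$ gives $\mu(X_i\cup\{x\})=\mu(X_i)+1$, and this yields the required $\pm E_{ij}$. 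I expect this step to be the main obstacle: one must check carefully that the measure axioms (additivity, Fubini, invariance under the conjugation and base change used to pass between different $U$) suffice for the bookkeeping when the sets $X_i$ are infinite.

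For (2), set $r=2$ and $k=\C$, and restrict to $G$-invariants. A $G$-orbit on $\pa_2(X)$ with $X_1$ finite is the same as a $G$-orbit on $\binom{X}{n}$, so $\H_{G,X}^\star$ embeds in $(\C^2)^{\otimes X}$ as the span of indicator functions of orbits with finite first part. Since the operators commute with $G$ and do not increase $|X_1|$ by more than one, this subspace is stable. We let $e=E_{12}$ or $E_{21}$, whichever adds a point. On invariants it becomes multiplication by the constant function $1\in\H^1$, as in Cameron's operator (Figure \ref{the action of e}). The element $h$ acts on $\H^n$ by a scalar $c+2n$ depending on $\mu(X)$, so $\H^\star$ is a weight module with weights bounded below and finite-dimensional weight spaces. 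To build the filtration, we take lowest weight vectors inductively: the first step is $\ker f$ in lowest degree, and later steps are kernels of $f$ modulo previously constructed submodules. Each generates a quotient of a lowest weight Verma module. Cameron's injectivity of $e$, valid since $X$ is infinite, applied to the successive subquotients shows that $e$ acts freely on each generated piece, so each piece is an actual Verma module. The proof of Cameron's injectivity should be redone for quotients, which I would do by passing to the associated graded. Exhausting degree by degree gives the ascending filtration.
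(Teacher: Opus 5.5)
Your part (1) is the paper's construction written in dual, integral form. The value $(E_{ij}v)(P)=\int_{x\in X_i}v(\ldots,X_i\setminus\{x\},\ldots,X_j\cup\{x\},\ldots)\,d\mu(x)$ is exactly the coefficient of $v_P$ in the $\mu$-lift of the basis operator $v_{Y_1,\ldots,Y_r}\mapsto\sum_{y\in Y_j}v_{\ldots,Y_i\cup\{y\},\ldots,Y_j\setminus\{y\},\ldots}$. Your Fubini splitting into diagonal and off-diagonal pairs is what the paper does via the composition property of $\mu$-lifts in Proposition~\ref{properties f mu} plus its case analysis: the swap terms with coefficient $1$ cancel, and the diagonal gives $\mu(Y_i)-\mu(Y_j)$. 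The set-up of (2) also matches the paper: finite first part, $e$ equal to Cameron's multiplication, $h$ acting on degree $n$ by $2n-\mu(X)$, and an inductive choice of $f$-singular vectors.

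The gap is the step you flag yourself. To conclude that $\C[e]v$ is a Verma module at stage $n$, you need $e$ to be injective on $Q=\mathsf{H}^{\star}_{G,X}/\mathcal{F}_n$. Neither Cameron's theorem nor a re-run of his proof gives this. $Q$ is no longer a space of functions on finite subsets, and injectivity passes neither to arbitrary quotients nor to associated gradeds. It also genuinely depends on which $f$-singular vectors you pick, and your phrase ``kernels of $f$ modulo previously constructed submodules'' leaves that open. For example, in $M^{-}(-m)$ with $m\in\mN$ and lowest weight vector $v_0$, the vector $e^{m+1}v_0$ is killed by $f$. But dividing by the submodule it generates leaves $L(m)$, on which $e$ is nilpotent.

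The missing idea is elementary and uses Cameron only once, on $\mathsf{H}^{\star}_{G,X}$ itself. Since $\mathcal{F}_n$ is $h$-stable and $h$ has distinct eigenvalues in distinct degrees, $Q$ is graded. Pick $v\neq 0$ in the lowest nonzero degree $d_0$ of $Q$. Then $fv=0$ automatically, and $\C[e]v\cong M^{-}(2d_0-\mu(X))$ because $e$ is injective on $Q$. Moreover $e$ stays injective on $Q/\C[e]v$. Take $w\in Q^d$ with $ew\in\C[e]v$. If $d<d_0$, then $w=0$. Otherwise $ew=a\,e^{d+1-d_0}v$, and injectivity on $Q$ gives $w=a\,e^{d-d_0}v$. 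Induction starting from Cameron's injectivity on $\mathsf{H}^{\star}_{G,X}$ then makes every subquotient a Verma module. Finite-dimensionality of each $\mathsf{H}^{d}_{G,X}$ gives exhaustion.

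The paper also takes $v$ of minimal degree. However, the vector-space fact it quotes only shows that $\mathsf{H}^{\star}/\mathcal{F}_n\to\mathsf{H}^{\star}/e(\mathcal{F}_n)$ is injective, and $e(\mathcal{F}_n)\subsetneq\mathcal{F}_n$. The degree argument above is what actually makes this step work.
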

		 The filtration by Verma modules gives representation theoretic meaning to Cameron's result about monotonicity of the infinite sequence $\left(\left| G\backslash \binom{X}{n} \right|\right)_{n\in \mN}$, since each Verma module corresponds to the constant $1$-sequence $1,1,1,1,\ldots$. This unifies Stanley's and Cameron's approaches. 
		 
		 \begin{beisA}
		 	Consider the the infinite symmetric group $G=S_{\infty}$ acting on $X=\N$. Measures $\mu$ on $S_{\infty}$ correspond to scalars $\lambda=\mu(\N)\in \C$. The $\sl_2(\C)$-action on $\H^{\star}_{G,X}$ looks as follows:
		 	\[
		 	\sl_2(\C) \acts \H_{S_{\infty},\N}^{\star}\colon 
		 	\begin{tikzcd}
		 		{v_{[0]}} \arrow[r, red!60!black, "1", bend left] \arrow["-\lambda"', teal!60!black, loop, distance=2em, in=305, out=235] & {v_{[1]}} \arrow[l, blue!60!black,  "\lambda", bend left] \arrow[r, red!60!black, "2", bend left] \arrow["-\lambda+2"', teal!60!black, loop, distance=2em, in=305, out=235] & {v_{[2]}} \arrow[l, blue!60!black, "\lambda-1", bend left] \arrow[r, red!60!black, "3", bend left] \arrow["-\lambda+4"', teal!60!black, loop, distance=2em, in=305, out=235] & {v_{[3]}}  \arrow[l, blue!60!black,  "\lambda-2", bend left] \arrow[r, red!60!black, "4", bend left] \arrow["-\lambda+6"', teal!60!black, loop, distance=2em, in=305, out=235] & \cdots \arrow[l, blue!60!black, "\lambda-3", bend left]
		 	\end{tikzcd}
		 	\]
		 	Here $v_{[n]}=\sum_{S\in\binom{\N}{n}}v_S$ is the basis vector corresponding to the unique $S_{\infty}$-orbit on $\binom{\N}{n}$. In this case $\H_{S_{\infty},\N}^{\star}$ is the lowest weight Verma module $M^{-}(-\lambda)$. The action of $\textcolor{red!60!black}{e}=\begin{psmallmatrix}
		 		0 & 1 \\ 0 & 0 
		 	\end{psmallmatrix}\in \sl_2(\C) \acts \H^{\star}_{G,X}$ is multiplication with $v_{[1]}\in H^{1}_{G,X}$. The actions of $\textcolor{blue!60!black}{f}=\begin{psmallmatrix}
		 		0 & 0 \\ 1 & 0
		 	\end{psmallmatrix}$ and $\textcolor{teal!60!black}{h}=\begin{psmallmatrix}
		 		1 & 0 \\ 0 & -1
		 	\end{psmallmatrix}$ are new and depend on the choice of measure $\mu$.
		 \end{beisA} 
	 \begin{rema*}
	 	One of the motivations of the introduction of measures in \cite{harmansnowden22} was to generalize the construction of Deligne's interpolation category $\operatorname{\underline{Rep}}(S_t)$ from \cite{deligne07}. Our construction $(k^r)^{\otimes X}$ is a generalization of the infinite tensor power in work by Entova-Aizenbud in \cite{aizenbud2015} on Schur--Weyl duality for interpolation categories.	 
	 \end{rema*}
%		 
%		 All the actions are appropriately compatible with the permutation action of $S_r$ on $\pa_r(X)$. Also the basis $\bigoplus_{n\geq 0} \cC\left( \binom{X}{n} \right)^{G}\subseteq \cC(\pa_2(X))$ is labeled by orbits $\bigsqcup_{n\geq 0}G\backslash \binom{X}{n}$.
%		  
The final theorem explains how to combine measures with the regular measure on $\mQ$.
\begin{thmB}[Theorem \ref{theorem measure on Fibonacci}]
	Let $\cS$ be a homogeneous oligomorphic graph. Finite subgraphs of $\cS\times \mQ$ correspond to sequences $(\cs_1,\ldots, \cs_l)$ of finite subgraphs $\cs_1,\ldots, \cs_l\subseteq \cS$. Moreover if $\cS$ admits an R-measure $\nu$, then $\cS\times \mQ$ admits an R-measure $\nu_{\mQ}$ which assigns to $(\cs_1,\ldots, \cs_l)$ the value $(-1)^l\prod_{i=1}^{l}\nu(\cs_i)$.
\end{thmB}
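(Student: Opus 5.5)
The plan is to work in Harman--Snowden's framework of measures for Fraïssé classes. An R-measure on a homogeneous structure is a function $\nu$ on isomorphism classes of embeddings $\cs\subseteq\cs'$ of finite substructures. It must satisfy the normalisation $\nu(\id)=1$, multiplicativity along chains $\cs\subseteq\cs'\subseteq\cs''$, and the key axiom: for an amalgamation situation, $\nu$ is compatible with decomposing the set of amalgams. Concretely, $\nu(\cs\subseteq \cs')$ for $\cs\subseteq \cs_1$ equals $\sum_{\ca}\nu(\cs_1\subseteq \ca)$ over the amalgams $\ca$ of $\cs_1$ and $\cs'$ over $\cs$.

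\textbf{Step 1: describe the finite substructures.} Here $\cS\times\mQ$ means the structure whose points are pairs $(x,q)$. Two points with different $\mQ$-coordinates are related only through the order on $\mQ$, and points with the same $\mQ$-coordinate carry the graph structure of $\cS$. Equivalently, it is the lexicographic-type product "a copy of $\cS$ at each rational", with the order relation between fibres. A finite substructure is then determined by its finitely many distinct $\mQ$-coordinates $q_1<\dots<q_l$ together with the fibre subgraphs $\cs_i\subseteq\cS$. This gives the correspondence with sequences $(\cs_1,\ldots,\cs_l)$. Homogeneity of the product follows from homogeneity of $\cS$ and of $\mQ$ by a back-and-forth argument. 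Embeddings $(\cs_1,\dots,\cs_l)\subseteq(\cs'_1,\dots,\cs'_m)$ are then an order-preserving injection $\sigma\colon[l]\to[m]$ together with embeddings $\cs_i\subseteq\cs'_{\sigma(i)}$.

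\textbf{Step 2: define the measure and check the easy axioms.} Following the formula in the statement, set
\[
\nu_{\mQ}\bigl((\cs_i)\subseteq(\cs'_j)\bigr)=(-1)^{m-l}\prod_{j\in\sigma([l])}\nu(\cs_{\sigma^{-1}(j)}\subseteq\cs'_j)\prod_{j\notin\sigma([l])}\nu(\emptyset\subseteq\cs'_j).
\]
For the empty substructure this specialises to $(-1)^l\prod\nu(\cs_i)$. Well-definedness on isomorphism classes and multiplicativity along chains are immediate, since signs and products compose.

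\textbf{Step 3: the amalgamation axiom.} This is the main obstacle. By the standard reduction (as in Harman--Snowden for $\mQ$), it suffices to verify the axiom for one-point extensions $\cs\subseteq\cs'$. Adding one point creates one of two situations.

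In the first case the point lies in an existing fibre $i$. Amalgams with $\cs_1$ over $\cs$ then keep that fibre index fixed, and the identity reduces to the axiom for $\nu$ in $\cS$ at fibre $i$.

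In the second case the point creates a new singleton fibre $\{x\}$ in a gap between two existing coordinates. The amalgams with $\cs_1$ come in two kinds:
\begin{itemize}
\item[(a)] the new fibre lands strictly in a gap of $\cs_1$ lying inside the given gap of $\cs$;
\item[(b)] the new point merges into an existing fibre of $\cs_1$ lying in that gap, and there it ranges over the one-point amalgams in $\cS$.
\end{itemize}
Suppose the gap of $\cs$ contains $t$ fibres $\cs_1$-new with graphs $\mathsf{y}_1,\dots,\mathsf{y}_t$. Kind (a) gives $t+1$ amalgams, each of measure $-\nu(\{x\})$. By the axiom for $\nu$, kind (b) sums over $j$ to $\nu(\emptyset\subseteq\mathsf{y}_j)^{-1}$-weighted terms. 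These total $\nu(\{x\})$ for each $j$, once the relevant relative measure is expanded. The required sum is therefore
\[
(t+1)(-\nu(\{x\}))+t\,\nu(\{x\})=-\nu(\{x\})=\nu_{\mQ}(\cs\subseteq\cs').
\]
This is exactly the telescoping mechanism behind the regular measure on $\mQ$, which is the case $\cS$ trivial with $\nu\equiv1$.

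The delicate point I would check carefully is the bookkeeping in case (b): the relative measure $\nu(\mathsf{y}_j\subseteq \mathsf{y}_j\cup x)$ summed over one-point amalgams must equal $\nu(\emptyset\subseteq\{x\})$. This is precisely the amalgamation axiom for $\nu$ applied to $\emptyset\subseteq\{x\}$ and $\emptyset\subseteq\mathsf{y}_j$. Finally, I would confirm that the regularity/finiteness hypotheses in the definition of R-measure transfer as well.
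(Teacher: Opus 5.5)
Your argument is correct. Its skeleton agrees with the paper's: reduce to one-point extensions, then distinguish whether the new vertex joins an existing fibre (reducing to the amalgamation axiom for $\nu$ in $\cS$) or opens a new fibre in a gap (reducing to the axiom for $\nu$ over $\emptyset$, plus sign bookkeeping).

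The difference lies in the reduction. The paper uses the one-point lemma (Lemma~\ref{technical lemma}) to make \emph{both} extensions one-point, so its cases are local:
\begin{itemize}
\item both new vertices land in the same existing fibre;
\item both land in the same new gap, where the merged amalgams coming from $\emptyset\subseteq\cS$ contribute with one sign and the two orderings of the new fibres with the other, giving $2-1=1$;
\item they land in different positions, where the amalgam is unique.
\end{itemize}
You reduce only the extension $\cs\subseteq\cs'$ and keep the other extension arbitrary. This forces the count over the $t+1$ gaps and $t$ fibres inside the relevant gap, giving $(t+1)(-1)+t=-1$.

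What your route buys is that the reduction needs nothing beyond the axioms. Factor $\cs\subseteq\cs''\subseteq\cs'$ with $\cs''\subseteq\cs'$ one-point. Amalgams of $\cs'$ and $\cs_1$ over $\cs$ correspond bijectively to pairs consisting of an amalgam $\ca''$ of $\cs''$ and $\cs_1$ over $\cs$ and an amalgam of $\cs'$ and $\ca''$ over $\cs''$. Multiplicativity of $\nu(\cs\subseteq\cs')=\nu(\cs')/\nu(\cs)$ then closes the induction. You should write this out rather than calling it ``standard'', but it is short. The paper's two-sided version, by contrast, is cited from work in preparation with only a one-line sketch. What the paper's route buys is smaller case checks. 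Your version also makes it transparent that the sign $(-1)^l$ is exactly the telescoping behind $\mu(\mQ)=-1$.

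Two minor points of presentation. You use $\cs_1$ both for a fibre and for the second extension. The sentence about ``$\nu(\emptyset\subseteq\mathsf{y}_j)^{-1}$-weighted terms'' should simply read: for each $j$, $\sum_{\ca'}\nu(\mathsf{y}_j\subseteq\ca')=\nu(\{x\})$ by the axiom for $\nu$ applied to $\emptyset\subseteq\{x\}$ and $\emptyset\subseteq\mathsf{y}_j$, as your last paragraph correctly states.
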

In the case $\cS=K_m$ we obtain a measure on $G=\Aut(K_m\times \mQ)$. Here $\dim_k \H_{G,X}^{n}=a_n$ gives the $m$-bonacci sequence. We showcase the $\sl_2(\C)$-action on $\H_{G,X}^{\star}$ in the Fibonacci case in Example \ref{example Verma of fibonacci}.

%		\subsection*{A Personal Recommendation and Short Overview}
%		We cannot recommend strongly enough that the reader takes a look at \cite[Lecture 1]{reinernote}, which discusses the connection between finite sequences of orbits and tensor powers. One does not need to be familiar with oligomorphic groups, homogeneous structures, or measures to read our paper. \Cref{section preliminaries} offers an overview of these topics. We also don't expect the reader to be familiar with Schwartz spaces. These are explained in \Cref{section linearizing}. We will sometimes use different notation from \cite{harmansnowden22} to avoid integrals and to make the text easier to read. All of the new results are contained in \Cref{section Lie algebras}.
		\subsection{Acknowledgments}
		 Thanks go to Sebastian Meyer for many discussions on oligomorphic groups and measures, Manuel Bodirsky for introducing me to oligomorphic groups and being my postdoc mentor, Pierre Touchard and Matej Konečný for model theory explanations, Johannes Flake for a computation in his office, Jonathan Wiebusch for combinatorics, Mateusz Stroiński for proofreading, Catharina Stroppel for an important clarification on Vermas, Peter Cameron for a fun chat, and Ulrich Krähmer for letting me keep my desk. Final thanks go to the Max--Planck institute in Bonn for coffee breaks, Liao Wang for listening to me, and my friends Thomas Häßel, Till Wehrhan, Benjamin and Janna Nettesheim, Timm Peerenboom, who let me sleep on their couches in Bonn and Aachen.
		 \subsection{Tool and computational resource disclosure}
		 LLM's or other AI tools were not used to generate the mathematical content of this paper. We used ChatGPT and Gemini for more complicated \LaTeX{} commands, specifically Tikz (in Figure \ref{the action of e} and within Example \ref{example Verma of fibonacci}), and to check grammar.
%		 \subsection*{Use of AI}
%		 We used ChatGPT/DeepSeek to create some of the Tikz pictures, help with \LaTeX, grammar, and to search for related literature.
		 \newpage
		 \tableofcontents
	\section{Combinatorial Preliminaries} \label{section preliminaries}
	We denote by $\mN=\{0,1,2,\ldots\}$ the set of non-negative integers and by $\N=\{1,2,\ldots\}$ the set of all positive integers. We fix a set $X$. For a set $X$ and $n\in \mN$ we write $\binom{X}{n}\coloneqq \{\{x_1,\ldots, x_n\}\subseteq X\}$ for the set of $n$-element subsets of $X$.
	\subsection{Oligomorphic Permutation Groups}
	 \label{subsection on Oligomorphic Permutation Groups}
	 This section is based on \cite{cameron90}.
	\begin{defi}
		Let $G$ be a group. Let $Y$ be a $G$-set. We denote by $G\backslash Y$ the set of $G$-orbits. We call $Y$ \textit{orbit-finite}, if $|G\backslash Y|<\infty$.
	\end{defi}
	We fix a set $X$.
	\begin{defi}
		We denote by $\Sym(X)$ the group of all self-bijections of $X$.
		In the case $X=\N$ we write $S_{\infty}\coloneqq\Sym(\N)$ and we call it the \textit{infinite symmetric group}. 
	\end{defi}
	The following definition is due to Peter Cameron.
	\begin{defi}
		A subgroup $G\subseteq \Sym(X)$ is called a \textit{permutation group}. A permutation group $G\subseteq \Sym(X)$ is called \textit{oligomorphic}, if the $G$-set $X^n$ with componentwise action is orbit-finite for all $n\in \mN$. 
	\end{defi}
	\begin{rema}
		``Oligo'' means ``few'' and ``morph'' means ``shape''. If $k$ is a field, one can identify the number $|G\backslash X^n|$ of orbits 
		with the dimension of the space of homomorphisms $\Hom_G(k_{\operatorname{triv}}, \Maps(X^n, k))$, where $k_{\operatorname{triv}}$ denotes $k$ with the trivial $G$-action. Hence $G$ is oligomorphic if and only if there are `few morphisms'. In Section \ref{section linearizing} we will discuss linearizations in detail.
	\end{rema}
\begin{beis} \label{example Bell number Stirling number}
	Every permutation group of a finite set is oligomorphic. The trivial group $G=\{\id_X\}\subseteq \Sym(X)$ is oligomorphic if and only if $X$ is finite. The infinite symmetric group $S_{\infty}=\Sym(\N)$ is the textbook example of an infinite oligomorphic permutation group. 
	For $n\in \mN$ the set $\N^n$ has orbits labeled by set-partitions of $\{1,\ldots, n\}$. Their number is the \textit{Bell number} $B_n$, which decomposes as sum $B_n=\sum_{i=1}^{n}\operatorname{St}_{n,i}$ of \textit{Stirling numbers of the second kind}. These count the number of ways to partition an $n$ element set into $i$-many parts. 
\end{beis}

\begin{lemm} \label{lemma oligomorphic}
	Let $G\subseteq \Sym(X)$ be a permutation group. Then $G$ is oligomorphic if and only if the $G$-set $\binom{X}{n}$ is orbit-finite for all $n\in \mN$. 
\end{lemm}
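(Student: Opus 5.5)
We prove the two implications separately, comparing orbits on $n$-tuples with orbits on $n$-subsets.

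For ``$\Rightarrow$'', we use the $G$-equivariant map $X^n\to \bigsqcup_{m\le n}\binom{X}{m}$, $(x_1,\ldots,x_n)\mapsto\{x_1,\ldots,x_n\}$. Every $n$-element subset is the image of an injective $n$-tuple, so the restriction to the injective tuples surjects onto $\binom{X}{n}$. An equivariant surjection sends orbits onto orbits, hence $|G\backslash\binom{X}{n}|\le |G\backslash X^n|<\infty$.

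For ``$\Leftarrow$'', assume every $\binom{X}{m}$ is orbit-finite and fix $n$. First we split $X^n$ into finitely many $G$-stable pieces according to the equality pattern of a tuple: for each set partition $p$ of $\{1,\ldots,n\}$ (there are $B_n$ of them, cf.\ Example \ref{example Bell number Stirling number}), let $X^n_p$ be the set of tuples with $x_i=x_j$ exactly when $i,j$ lie in the same block. Forgetting repetitions identifies $X^n_p$, $G$-equivariantly, with the set of injective $m$-tuples, where $m$ is the number of blocks. So it suffices to show that the set $X^{(m)}$ of injective $m$-tuples is orbit-finite for every $m$.

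For this we use the map $X^{(m)}\to\binom{X}{m}$ sending a tuple to its underlying set. It is $G$-equivariant and surjective, and each fibre is a torsor for $S_m$, so it has $m!$ elements. Over a fixed orbit $G\cdot S$, every $G$-orbit in $X^{(m)}$ meets the fibre over $S$: if $g(x_1,\ldots,x_m)$ has underlying set $S$, then it lies in that fibre. Hence the orbits lying over $G\cdot S$ number at most $m!$, and in total $|G\backslash X^{(m)}|\le m!\,|G\backslash\binom{X}{m}|<\infty$. Summing over the $B_n$ partitions then gives $|G\backslash X^n|\le\sum_{m\le n}\operatorname{St}_{n,m}\,m!\,|G\backslash\binom{X}{m}|<\infty$.

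The only delicate point is the fibre-counting step: the claim that the orbits of $G$ on $X^{(m)}$ over a single orbit of $m$-sets are bounded by the fibre size. It follows because each such orbit meets the fibre over a chosen representative $S$. Everything else is bookkeeping, and no earlier results beyond the definitions are needed.
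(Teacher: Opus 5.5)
Your proof is correct and follows essentially the same route as the paper. In one direction you realize $\binom{X}{n}$ as the image of the $G$-stable subset $X^{(n)}\subseteq X^n$; in the other you combine the bound $|G\backslash X^{(n)}|\le n!\,|G\backslash\binom{X}{n}|$ with the decomposition $X^n\cong\bigsqcup_{i}\operatorname{St}_{n,i}X^{(i)}$ by equality patterns, and the only difference is that you prove the $n!$ bound explicitly via the fibre argument, which the paper takes from Cameron.
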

\begin{proof}
	Clearly if $X^n$ is orbit-finite, then
	\[
		X^{(n)}\coloneqq \{(x_1,\ldots, x_n) \mid x_i\neq x_j \text{ for } i\neq j\}\subseteq X^n
	\] 
	is orbit-finite. Hence $\binom{X}{n}\cong S_n\backslash X^{(n)}$ also is orbit-finite, since it is a quotient of $X^{(n)}$. For the other direction we recall an argument from \cite[\S1.2]{cameron90}. We have
	\[
	|G \backslash X^{(n)}|\leq n! \cdot \left|G \backslash \binom{X}{n}\right|< \infty.
	\]
	Furthermore as $G$-sets $X^n\cong \bigsqcup_{i=1}^n \operatorname{St}_{n,i} X^{(i)}$, where $\operatorname{St}_{n,i}$ is the Stirling number from Example \ref{example Bell number Stirling number}.  
	We obtain
	\[
	|G \backslash X^{n}|= \sum_{i=1}^{n} \operatorname{St}_{n,i} |G \backslash X^{(i)}| < \infty. \qedhere
	\]
\end{proof}
\begin{defi} \label{definition rank generating function for group}
	Let $G\subseteq \Sym(X)$ be an oligomorphic permutation group. We define the \textit{rank generating function} of $G\subseteq \Sym(X)$ as the formal power series 
	\[
		r_{G}(q)\coloneqq \sum_{n=0}^{\infty} \left|G\backslash \binom{X}{n}\right|q^n\in \mN[[q]].
	\]
\end{defi}
\begin{lemm} \label{lemma disjoint unions is oligomorphic}
	Let $X_1$, $X_2$ be sets and $G_1\subseteq \Sym(X_1)$ and $G_2\subseteq \Sym(X_2)$ be oligomorphic permutation groups. The product $G_1\times G_2\subseteq \Sym(X_1\sqcup X_2)$ is also oligomorphic and $r_{G_1\times G_2}(q)=r_{G_1}(q)\cdot r_{G_2}(q)$.
\end{lemm}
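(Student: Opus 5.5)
The plan is to use the criterion of Lemma \ref{lemma oligomorphic}, which characterizes oligomorphic groups via orbit-finiteness of the sets $\binom{X}{n}$, and to compute the orbits on $\binom{X_1\sqcup X_2}{n}$ directly. Here $G_1\times G_2$ acts on $X_1\sqcup X_2$ componentwise: $(g_1,g_2)$ acts by $g_1$ on $X_1$ and by $g_2$ on $X_2$. This is a faithful action, so $G_1\times G_2$ is indeed a permutation group of $X_1\sqcup X_2$.

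First I will set up a decomposition of $G_1\times G_2$-sets. Every $n$-element subset $S\subseteq X_1\sqcup X_2$ decomposes uniquely as $S=S_1\sqcup S_2$ with $S_j=S\cap X_j$. The action preserves $X_1$ and $X_2$, so the sizes $|S_1|=i$ and $|S_2|=n-i$ are invariants. This gives an isomorphism of $G_1\times G_2$-sets
\[
\binom{X_1\sqcup X_2}{n}\cong \bigsqcup_{i=0}^{n}\binom{X_1}{i}\times\binom{X_2}{n-i},
\]
where $G_1\times G_2$ acts on each product factorwise.

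Next, for a product action of $G_1\times G_2$ on $Y_1\times Y_2$, the orbits are exactly the products of orbits. Indeed, $(g_1,g_2)\cdot(y_1,y_2)=(g_1y_1,g_2y_2)$, and $g_1$ and $g_2$ can be chosen independently. Hence
\[
|(G_1\times G_2)\backslash (Y_1\times Y_2)|=|G_1\backslash Y_1|\cdot|G_2\backslash Y_2|.
\]
Combining the two steps yields
\[
\left|(G_1\times G_2)\backslash\binom{X_1\sqcup X_2}{n}\right|=\sum_{i=0}^{n}\left|G_1\backslash\binom{X_1}{i}\right|\left|G_2\backslash\binom{X_2}{n-i}\right|.
\]

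By Lemma \ref{lemma oligomorphic}, each factor on the right is finite, so the left side is finite for every $n$. Applying Lemma \ref{lemma oligomorphic} again, $G_1\times G_2$ is oligomorphic. The displayed identity is precisely the Cauchy product formula for the coefficient of $q^n$ in $r_{G_1}(q)\cdot r_{G_2}(q)$, which gives the claimed equality of rank generating functions.

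There is no real obstacle here. The only point requiring care is to verify that the decomposition $S\mapsto(S\cap X_1,S\cap X_2)$ is equivariant and bijective, so that orbits on the disjoint union correspond bijectively to the orbits on the pieces.
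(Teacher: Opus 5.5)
Your proof is correct and follows the paper's argument: the paper uses the same $G_1\times G_2$-equivariant decomposition $\binom{X_1\sqcup X_2}{n}\cong \bigsqcup_{i=0}^{n}\binom{X_1}{i}\times \binom{X_2}{n-i}$ and then invokes Lemma \ref{lemma oligomorphic}. You also state explicitly that orbits of the factorwise action on a product are products of orbits, which the paper leaves implicit; this is the step that turns the decomposition into the Cauchy product formula.
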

\begin{proof}
	Let $n\in \mN$. We have $\binom{X_1\sqcup X_2}{n}\cong \bigsqcup_{i=0}^{n}\binom{X_1}{i}\times \binom{X_2}{n-i}$ as $G_1\times G_2$-sets, which gives the statement about the product of rank generating functions. In particular $G_1\times G_2$ is oligomorphic by Lemma \ref{lemma oligomorphic}.
\end{proof}
\begin{beis} \label{example: polynomial ring generating function}
	We have $r_{S_{\infty}}(q)=\frac{1}{1-q}=1+q+q^{2}+\cdots$, since $S_{\infty}$ acts transitively on $\binom{\N}{n}$ for all $n\in \mN$. For $r\in \N$ we can consider the oligomorphic group $S_{\infty}^m\subseteq \Sym(\N\times \{1,\ldots, m\})$. Here we get the series
	\[
		r_{S_{\infty}^m}(q)=\left(\frac{1}{1-q}\right)^m=\sum_{n=0}^{\infty} \binom{n+m-1}{n}q^n.
	\]
	It is the Hilbert--Poincaré series of the polynomial ring in $m$-variables $k[x_1,\ldots, x_m]\cong k[x]^{\otimes m}$. We will return to this example later in Subsection \ref{subsection disjoint unions}.
\end{beis}

\begin{beis} \label{beis: partitions}
	Let $X=\mQ\times \N$. Consider the group given by the wreath product $\Aut(\mQ, <) \wr S_{\infty} \subseteq \Sym(\mQ\times \N)$. It may be imagined as follows: The set $X=\mQ\times \N$ consists of countably many copies $\mQ\times \{n\}$ for $n\in \N$. These copies are permuted by $G$ externally in an arbitrary way (this is the action of $S_{\infty}$), and internally via order-preserving automorphisms of $\mQ$. There is a bijection
	\[
		G\backslash\binom{X}{n} \xleftrightarrow{1\colon 1} \{\lambda \mid \lambda \vdash n\} 
	\]
	between orbits and integer partitions $\lambda=\lambda_1\geq \lambda_2\geq\cdots \geq \lambda_r>0$ of $n$.
	Indeed, each $G$-orbit on $\binom{X}{n}$ may be represented by the $n$-element set
	\[
		\{(1,1),\ldots, (\lambda_1,1), \, (1,2),\ldots, (\lambda_2,2), \,  \ldots, \,  (1,r),\ldots, (\lambda_r,r)\}
	\]
	for some $\lambda=\lambda_1\geq \lambda_2\geq \lambda_r>0$ with $n=\sum_{i=1}^r\lambda_i$. This uses that the action of $\Aut(\mQ,<)$ on $\binom{\mQ}{m}$ is transitive for all $m\in \mN$, since any $m$-element set can be imagined as totally ordered in the first place.
	The rank generating function of $G$ is the usual generating function of integer partitions
	\[
	r_G(q)=\prod_{i=1}^{\infty}\frac{1}{1-q^{i}}=1+q+2q^{2}+3q^{3}+5q^{4}+7q^{5}+11q^{6}+\cdots.
	\]
\end{beis}
\begin{rema}
	There are many other infinite integer sequences, which appear as orbit numbers for oligomorphic groups. Highlights include Fibonacci numbers, Catalan numbers, and the number of partitions with at most $n$ rows (for fixed $n\in \N$). See \cite{cameron00} for an overview of sequences of group orbits which have an OEIS entry.
\end{rema}

\subsection{Topology on Oligomorphic Permutation Groups} \label{subsection topology on oligomorphic permutation groups}

We fix a set $X$ and an oligomorphic permutation group $G\subseteq \Sym(X)$. This section is based on \cite{cameron90,tsankov12,harmansnowden22}.
	\begin{defi} \label{defi topology}
		Subsets of $G$ of the form
		\[
		U(x_1,\ldots, x_n; x_1',\ldots, x_n')\coloneqq \{g\in G \mid gx_i=x_i' \text{ for all } i\in \{1,\ldots, n\}\},
		\]
		where $n\in \mN$, $x_1,\ldots, x_n, x_1',\ldots, x_n'\in X$, are called \textit{basic open sets} of $G$. Every subset of $G$, which is a union of basic open sets, is called \textit{open}. This defines the \textit{pointwise (convergence) topology} on $G$. We set 
		\[
			U(x_1,\ldots, x_n)\coloneqq U(x_1,\ldots, x_n; x_1,\ldots, x_n).
		\]
	\end{defi}
	The next remark is the conceptual explanation for the existence of this topology.
	\begin{rema}
		Start with a set $X$ and view it as discrete topological space. The set $\Maps(X,X)=\{f\colon X\to X\}$ may be identified with the product $\prod_{x\in X}X$, i.e.\ carries a natural topology, which is the product topology. 
		In particular $\Sym(X)$ inherits the subspace topology of $\Maps(X,X)$. The subspace topology of a permutation group $G\subseteq \Sym(X)$ is precisely the pointwise topology. Note that the (infinite) product of discrete topological spaces is in general no longer discrete. This is the same reason, why the topology on $\mathbb{Z}_p$ or the Cantor set are non-trivial.
	\end{rema}
	We gather the most important properties known about the pointwise topology.
	\begin{prop} \label{topological properties}
		The pointwise topology turns $G$ into a topological group, which is
		\begin{enumerate}
			\item Hausdorff,
			\item \textit{non-Archimedean}, i.e.\ every open neighborhood of $1\in G$ contains an open subgroup
			\item \textit{Roelcke precompact}, i.e.\ for each two open subgroups $U_1, U_2$ the number $|U_1\backslash G / U_2|$ of double cosets is finite.
		\end{enumerate}
		Additionally the following properties hold about subgroups.
		\begin{enumerate}
			\item[a)] A subgroup $U\subseteq G$ is open, if and only if it contains a basic open subgroup $U(x_1,\ldots, x_n)$ for some $n\in \mN$ and $x_1,\ldots, x_n\in X$. \label{item condition a}
			\item[b)] Each open subgroup $U\subseteq G$ is an oligomorphic permutation group of $X$, moreover the pointwise topology on $U$ agrees with the subspace topology of the pointwise topology on $G$. \label{item condition b}
		\end{enumerate}
	\end{prop}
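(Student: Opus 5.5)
We check each assertion directly from Definition \ref{defi topology}, using the basic open sets $U(\bar x;\bar x')$ for finite tuples $\bar x=(x_1,\ldots,x_n)$.

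\textbf{Group structure and (1), (2).} Suppose $gh\in U(\bar x;\bar x')$. Put $\bar y=h\bar x$. Then $U(\bar y;\bar x')\cdot U(\bar x;\bar y)\subseteq U(\bar x;\bar x')$, which gives continuity of multiplication at $(g,h)$. Similarly, $U(\bar x;\bar x')^{-1}=U(\bar x';\bar x)$, so inversion is continuous. For the Hausdorff property, take $g\neq h$ and choose $x$ with $gx\neq hx$. Then $U(x;gx)$ and $U(x;hx)$ are disjoint open sets separating $g$ and $h$. For (2), every open neighbourhood of $1$ contains some basic set $U(\bar x;\bar x')$ containing $1$. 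This forces $\bar x=\bar x'$, so the basic set is the subgroup $U(\bar x)$.

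\textbf{(a) and (b).} If $U$ is an open subgroup, then $1\in U$, so by the argument for (2) $U$ contains some $U(\bar x)$. Conversely, suppose $U\supseteq U(\bar x)$. Then $U=\bigcup_{u\in U}u\,U(\bar x)$, and each coset $u\,U(\bar x)=U(\bar x;u\bar x)$ is basic open, so $U$ is open.

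For (b), let $U$ be open and pick $U(\bar x)\subseteq U$ with $\bar x=(x_1,\dots,x_m)$. Each $U(\bar x)$-orbit on $X^n$ lies in a $G$-orbit. The $U(\bar x)$-orbit of $\bar y\in X^n$ is exactly the fibre over $\bar y$ of the $G$-orbit of $(\bar x,\bar y)\in X^{m+n}$, restricted to those tuples whose first $m$ coordinates equal $\bar x$. Hence $|U(\bar x)\backslash X^n|\le |G\backslash X^{m+n}|<\infty$, and the same bound holds for the larger group $U$. The basic open sets of $U$ are the sets $U(\bar x;\bar x')\cap U$, so the pointwise topology of $U$ is the subspace topology.

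\textbf{(3) Roelcke precompactness.} This is the step needing an idea. By (a) we may shrink $U_1,U_2$ to basic subgroups $U(\bar a)$ and $U(\bar b)$, since shrinking only increases the number of double cosets. Consider the map
\[
U(\bar a)\,g\,U(\bar b)\;\mapsto\; U(\bar a)\text{-orbit of } g\bar b\in X^{|\bar b|}.
\]
It is well defined because $U(\bar b)$ fixes $\bar b$. It is injective: if $u\,g\bar b=g'\bar b$ with $u\in U(\bar a)$, then $g^{-1}u^{-1}g'\in U(\bar b)$, so $g'\in U(\bar a)\,g\,U(\bar b)$. The target is finite by (b), so there are finitely many double cosets.

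The main obstacle is only the orbit-counting in (b). It is settled by embedding stabiliser orbits into $G$-orbits on longer tuples, which is exactly where oligomorphicity of $G$ is used.
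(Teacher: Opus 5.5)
Your proof is correct and takes the same route as the paper. The paper proves Roelcke precompactness by noting that $G/U_2$ is a single $G$-orbit which stays orbit-finite for $U_1\supseteq U(\bar a)$; this is exactly your identification of $U(\bar a)\backslash G/U(\bar b)$ with $U(\bar a)$-orbits on $G\bar b\subseteq X^{|\bar b|}$ combined with (b). The paper calls the remaining items straightforward, and you verify them correctly. One cosmetic slip in (b): the $U(\bar x)$-orbit of $\bar y$ is the fibre over $\bar x$, not over $\bar y$, of the projection of $G\cdot(\bar x,\bar y)$ onto the first $m$ coordinates, but your injection $U(\bar x)\backslash X^n\hookrightarrow G\backslash X^{m+n}$ is right.
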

	\begin{proof}
		For Roelcke precompactness note that the $G$-set $G/U_2$ is orbit-finite (it consists of one orbit), and hence it also orbit-finite as a $U_1$-set, since $U_1$ contains a basic open. The other properties are straightforward.
	\end{proof}
	
	From now on we fix the pointwise topology on $G$.
	\begin{defi}
		Let $Y$ be a $G$-set. We call $Y$ \textit{smooth}, if the action map $G\times Y\to Y$ is continuous, when $Y$ is equipped with the discrete topology. In concrete terms this means that the \textit{point stabilizer subgroup} $G_y\coloneqq \{g\in G \mid gy=y\}\subseteq G$ of each $y\in Y$ is open. 
	\end{defi}
	\begin{beis} \label{example smooth orbit-finite sets}
		By the orbit-stabilizer theorem smooth and orbit-finite $G$-sets are those which are isomorphic to
		$G/U_1\sqcup \cdots \sqcup G/U_r$
		for some $r\in \mN$ and open subgroups $U_1,\ldots, U_r\subseteq G$.
		Common examples of smooth, orbit-finite $G$-sets are the one-point set $\{\pt\}=G/G$, $X$ and more generally $X^n$ for all $n\in \mN$. The stabilizers of points in these examples are basic open subgroups.
		
		Every $G$-subset of a smooth $G$-set is smooth, for instance
		\begin{equation*}
			X^{(n)}\coloneqq \{(x_1,\ldots, x_n)\in X^n \mid x_i \neq x_j \text{ for all } i\neq j\}\subseteq X^n
		\end{equation*}
		is a smooth $G$-set. Every quotient of a smooth $G$-set is smooth by property a) in Proposition \ref{topological properties}. For instance, consider for each $n\in \mN$ the set $\binom{X}{n}$
		with elementwise $G$-action. This $G$-set is smooth, since there is a surjective homomorphism of $G$-sets from $X^{(n)}$ onto it. The stabilizer of a point $\{x_1,\ldots, x_n\}\in \binom{X}{n}$ is precisely $(\Sym(\{x_1,\ldots, x_n\})\times U(x_1,\ldots, x_n))\cap G$ and contains $U(x_1,\ldots, x_n)$. One can show that every orbit on a smooth $G$-set is a quotient of an orbit which appears in $X^n$ for some $n\in \mN$. 
	\end{beis} 
	\begin{lemm}[{\cite[Proposition 2.8]{harmansnowden22}}] \label{lemma products of smooth}
		Let $Y_1$ and $Y_2$ be smooth $G$-sets. The disjoint union $Y_1\sqcup Y_2$ and Cartesian product $Y_1\times Y_2$ are also smooth. If $Y_1$ and $Y_2$ are smooth and orbit-finite, then $Y_1\sqcup Y_2$ and $Y_1\times Y_2$ are orbit-finite.
	\end{lemm}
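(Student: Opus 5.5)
For smoothness I will work directly with point stabilizers. The $G$-action on $Y_1\sqcup Y_2$ is the one inherited from each summand. So the stabilizer of a point $y\in Y_1\sqcup Y_2$ is just its stabilizer $G_y$ in $Y_1$ or in $Y_2$, and that subgroup is open by assumption.

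For the product $Y_1\times Y_2$ with diagonal action we have
\[
G_{(y_1,y_2)}=G_{y_1}\cap G_{y_2}.
\]
This is an intersection of two open subgroups, hence an open subgroup. It is even more concrete by property a) of Proposition \ref{topological properties}: $G_{y_1}\supseteq U(x_1,\ldots,x_n)$ and $G_{y_2}\supseteq U(x_1',\ldots,x_m')$. Hence the intersection contains the basic open subgroup $U(x_1,\ldots,x_n,x_1',\ldots,x_m')$, so it is open.

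For orbit-finiteness, the disjoint union is immediate: $G\backslash(Y_1\sqcup Y_2)=G\backslash Y_1\sqcup G\backslash Y_2$. For the product I would first reduce to the transitive case. By Example \ref{example smooth orbit-finite sets}, $Y_1\cong\bigsqcup_i G/U_i$ and $Y_2\cong\bigsqcup_j G/V_j$ with finitely many open subgroups $U_i,V_j$. The product then distributes as the finite disjoint union $\bigsqcup_{i,j}G/U_i\times G/V_j$, so it suffices to show that $G/U\times G/V$ is orbit-finite for open subgroups $U,V$.

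Every $G$-orbit on $G/U\times G/V$ meets $\{eU\}\times G/V$. Two points $(eU,gV)$ and $(eU,g'V)$ lie in the same $G$-orbit exactly when some $u\in U$ satisfies $ugV=g'V$. This gives a bijection
\[
G\backslash(G/U\times G/V)\cong U\backslash G/V,
\]
and $U\backslash G/V$ is finite by Roelcke precompactness, part (3) of Proposition \ref{topological properties}. This proves the claim.

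The only step needing any care is this double coset identification together with the reduction to transitive summands. Both are standard, so I do not expect a genuine obstacle.
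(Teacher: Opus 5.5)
Your proof is correct and follows the paper's route: the stabilizer of a pair is $G_{y_1}\cap G_{y_2}$, and for orbit-finiteness you reduce to transitive factors and then use Roelcke precompactness. Your explicit bijection $G\backslash(G/U\times G/V)\cong U\backslash G/V$ spells out what the paper compresses into ``the same argument as in Proposition \ref{topological properties} for Roelcke precompactness.''
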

\begin{proof}
	The statements are obvious for disjoint unions. The stabilizer subgroup of $(y_1,y_2)\in Y_1\times Y_2$, is the intersection $G_{y_1}\cap G_{y_2}$, which is open. To conclude that $Y_1\times Y_2$ is orbit-finite, assume without loss of generality that $Y_1$ and $Y_2$ are transitive. Applying the same argument as in Proposition \ref{topological properties} for Roelcke precompactness gives the proof. 
\end{proof}
	\subsection{Measures on Groups}
	We fix a set $X$ and an oligomorphic permutation group $G\subseteq \Sym(X)$. This section is based on \cite{harmansnowden22}. 
	\begin{defi} \label{definition G hat sets}
		We define:
		\begin{enumerate}
			\item A \textit{$\hat{G}$-set} is a set $Z$ together with a smooth and orbit-finite action of some open subgroup $U\subseteq G$. Shrinking the subgroup does not change a $\hat{G}$-set.
			\item Let $Y$ be a smooth $G$-set. A subset $Z\subseteq Y$ is called a \textit{$\hat{G}$-subset}, if it is stable under the action of some open subgroup $U\subseteq G$.
			We denote by $2_{\hat{G}}^{Y}\subseteq 2^{Y}$ the set of $\hat{G}$-subsets of $Y$.
		\end{enumerate}
	\end{defi}
	\begin{lemm} \label{lemma elementary set-theoretic properties of hat G sets}
		 Let $Y$ be a smooth $G$-set. The set of $\hat{G}$-subsets $2_{\hat{G}}^{Y}$ is closed under taking complements, finite unions and finite intersections. In other words it forms a boolean algebra. Moreover if $y\in Y$, then $Z\cup\{y\}$ and $Z\setminus\{y\}$ are again $\hat{G}$-subsets of $Y$. 
	\end{lemm}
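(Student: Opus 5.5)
The whole argument rests on one observation: the stabilizer of a subset is a subgroup, and any two open subgroups can be intersected to give another open subgroup. So for each operation I will write down an explicit open subgroup of $G$ that stabilizes the result.

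\textbf{Complements.} Let $Z\in 2_{\hat G}^{Y}$ be stable under an open subgroup $U\subseteq G$.
Since $U$ acts by bijections of $Y$ and $U\cdot Z\subseteq Z$, we get $U\cdot Z=Z$. Indeed, $u^{-1}\in U$, so $Z=u\,u^{-1}Z\subseteq uZ$ for every $u\in U$.
Because each $u\in U$ is a bijection of $Y$, it follows that $u(Y\setminus Z)=Y\setminus uZ=Y\setminus Z$. Hence $Y\setminus Z$ is $U$-stable and lies in $2_{\hat G}^{Y}$.

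\textbf{Finite unions and intersections.} Let $Z_1,\dots,Z_n$ be stable under open subgroups $U_1,\dots,U_n$, and put $U=U_1\cap\dots\cap U_n$.
This $U$ is a subgroup, and it is open by Proposition \ref{topological properties}. Indeed, each $U_i$ contains a basic open subgroup $U(\bar x_i)$, so $U$ contains $U(\bar x_1,\dots,\bar x_n)$, and a) applies.
Every $Z_i$ is $U$-stable, so $\bigcup_i Z_i$ and $\bigcap_i Z_i$ are $U$-stable. The empty union and the empty intersection, namely $\emptyset$ and $Y$, are $G$-stable. Alternatively, intersections follow from unions and complements by De Morgan. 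Thus $2_{\hat G}^{Y}$ is a Boolean subalgebra of $2^{Y}$.

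\textbf{Adding or removing a point.} Fix $y\in Y$. Since $Y$ is smooth, the singleton $\{y\}$ is stable under the open subgroup $G_y$, so $\{y\}\in 2_{\hat G}^Y$.
We then have
\[
Z\cup\{y\}\in 2_{\hat G}^{Y}, \qquad Z\setminus\{y\}=Z\cap (Y\setminus\{y\})\in 2_{\hat G}^{Y},
\]
by the closure properties already proved.

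\textbf{Main obstacle.} No step is genuinely hard. The only point that needs care is openness of the intersection $U_1\cap\dots\cap U_n$, which is exactly where Proposition \ref{topological properties} a) is used. The other point is that $U\cdot Z\subseteq Z$ upgrades to $U\cdot Z=Z$, and this is what makes complements work.
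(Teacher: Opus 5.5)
Your proof is correct and matches the paper's, which just notes that these operations preserve the relevant stabilizers and that finite intersections of open subgroups are open. You spell this out: the stabilizer of $Z$ also stabilizes $Y\setminus Z$, and adding or removing a point reduces to Boolean closure via $\{y\}\in 2_{\hat G}^{Y}$ and the open subgroup $G_y$. (Openness of $U_1\cap\dots\cap U_n$ is also immediate from the general fact that finite intersections of open sets are open, without invoking part a).)
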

	\begin{proof}
		Under these operations stabilizers of points are not changed, also the intersection of open subgroups is open.
	\end{proof}
\begin{beis} \label{example finite or cofinite subsets}
	Let $G=S_{\infty}$. The $\hat{G}$-subsets of $\N$ are those subsets which are finite or cofinite, i.e.\ whose complement is finite.
\end{beis}
	\begin{defi}[{\cite[Definition 3.1]{harmansnowden22}}]
		A \textit{measure} on $G$ with values in the field $k$ is a rule $\mu$ assigning each $\hat{G}$-set $Z$ a value $\mu(Z)\in k$ such that the following assertions hold:
		\begin{enumerate}
			\item Isomorphism invariance: $\mu(Z_1)=\mu(Z_2)$ holds for all $\hat{G}$-sets $Z_1,Z_2$ with $Z_1\cong Z_2$.
			\item Normalization: $\mu(\{\pt\})=1$.
			\item Conjugation invariance: $\mu(Z^g)=\mu(Z)$ holds for each $\hat{G}$-set $Z$ and all $g\in G$. Here $Z^g$ denotes the $\hat{G}$-set obtained by conjugating the action on $Z$ with $g\in G$.
			\item Multiplicativity: $\mu(Z_1)=\mu(f^{-1}(z)) \mu(Z_2)$ holds for all homomorphisms $Z_1\xrightarrow{f} Z_2$ of transitive $U$-sets for some open subgroup $U\subseteq G$ and points $z\in Z_2$. 
		\end{enumerate}
	A measure is called \textit{regular} if $\mu(Z)\neq 0$ for all transitive $G$-sets $Z$.
	\end{defi}
	\begin{beis} \label{example measure in finite setting}
		If $X=\{x_1,\ldots, x_n\}$ is finite, then $G\subseteq \Sym(X)$ is finite and discrete.
		Smooth and orbit-finite $\hat{G}$-sets are exactly finite sets, since the trivial group $\{1\}\subseteq G$ is open (it is precisely $U(x_1,\ldots, x_n)$) and acts on each finite set. There is a unique measure on $G$ with values in $k$, which assigns each finite set $Z$ the value $|Z|\in k$ viewed as a scalar. This measure on $G$ is regular if and only if $\operatorname{char}(k)$ does not divide the order of $G$, since $G=G/\{1\}$ is a transitive smooth $G$-set in this setting. 
	\end{beis}
	\begin{beis}[{\cite[\S14.6]{harmansnowden22}}] \label{example measure for S_infty}
		Assume $\operatorname{char}(k)=0$. For each $\lambda\in k $ there is a measure $\mu_{\lambda}$ on $S_{\infty}$, which is completely determined by assigning to $\N$ the value $\mu(\N)=\lambda$. This measure is regular if and only if $\lambda\notin \mN$. We will explain this example in more detail in Example \ref{example measure for S} and keep coming back to it over and over. For the reader familiar with Deligne's interpolation category (cf.\ \cite{deligne07}): it will become clear why we use the letter $\lambda\in k$ instead of $t\in k$ in Subsection \ref{subsection: sl2 and Stanley}.
	\end{beis}
	\begin{beis}[{\cite[\S16.7]{harmansnowden22}}] \label{example measure on Q}
		Let $k$ be any field. There is a unique regular measure on $\Aut(\mQ, <)$. It assigns to $\mQ$ the value $\mu(\mQ)=-1$. There are three more measures, which are not regular. The following is some heuristics for the regular measure on $\mQ$. If one removes a point from $\mQ$, one gets two copies of $\mQ$. The intuition should be $\mu(\mQ)-1=2\cdot \mu(\mQ)$. The unique solution of this equation is $\mu(\mQ)=-1$.
	\end{beis}
	\begin{rema}
		 Let $G\subseteq \Sym(X)$ be an oligomorphic permutation group and assume $X$ is countably infinite. Every $\widehat{\Sym(X)}$-set is in particular a $\hat{G}$-set. In particular every measure on $G$ really is a finer version of one of the measures $\mu_{\lambda}$ for $\Sym(X)\cong S_{\infty}$ in Example \ref{example measure for S_infty}.
	\end{rema}
	\begin{beis}
		Since the publication of \cite{harmansnowden22} many papers were written concerned with computing measures for various oligomorphic groups, often connected to some family of combinatorial objects. See \cite{harmansnowdensnyder23} for the group of permutations of roots of unity preserving the cyclic order, see \cite{harmansnowdeninterpolation} for $\GL_{\infty}(\mathbb{F}_q)$ and other classical groups, \cite{canrud26} for planar trees, \cite{, krizsophietree} for trees with ordered vertices, \cite{snowden24} for finite sets with two total orders, and \cite{krizsophiequantum} for a quantum $\Aut(\mQ,<)$. 
	\end{beis}
	A measure should be thought of as a way to ``count'' infinite sets $Z$ by assigning them a value $\mu(Z)$. In the next section we discuss automorphism groups of homogeneous graphs $\cS$. In that context the usual $G$-set $Y$ is the set of all embeddings $\{\cs\hookrightarrow \cS\}$ of a finite graph $\cs$.
	\begin{rema}
		In \cite[\S3.8]{harmansnowden22} it is shown that giving a measure is equivalent to giving to a so-called generalized index $[[U \colon V]]$ on $G$ for open subgroups $V\subseteq U\subseteq G$. This notion generalizes the usual index for subgroups and can be helpful for intuition.
	\end{rema}
	
	\subsection{Graphs with Several Families of Edges} \label{subsection graphs}
	Graphs are a template for combinatorial objects, whose isomorphism classes may be counted in terms of orbits. Since some of our examples cannot be expressed as ordinary directed graphs, we will work with multi-relational graphs (also known as edge-colored graphs), which are directed graphs with several families of edges.
	\begin{defi}
		Let $I$ be a set. A \textit{($I$-multi-relational) graph} $\cS$ consists of a (possibly infinite) set $X$ of \textit{vertices} together with (possibly infinite) subsets $E_i\subseteq X^2$ of \textit{(oriented) $i$-edges} for each $i\in I$.
	\end{defi}
	\begin{beis}
		An $I$-multi-relational graph for $I=\emptyset$ is simply a set. The most important $\emptyset$-multi-relational graph is $\cS=\N$.
		An $I$-multi-relational graph for $I=\{1\}$ is a directed graph $(X,E)$ without parallel edges, but possibly loops $(x,x)$ at vertices $x\in X$. An example for a $\{1\}$-multi-relational graph is $(\mQ,<)$. Its vertex set $X=\mQ$ consists of rational numbers and its set of edges $E=\{(x_1,x_2) \mid x_1< x_2\}\subseteq \mQ^{2}$ describes the total order.
	\end{beis}
	\begin{nota}
		We will fix $I$, but keep it implicit from now on, and refer to $I$-multi-relational graphs simply as \textit{graphs}.
	\end{nota}
\begin{rema}
	We could use arbitrary relational structures in this paper instead of (multi-relational) graphs. If the reader is familiar with model theory, they should replace every graph with ``relational structure''.
\end{rema}
	\begin{defi}
		Let $\cS=(X,(E_i)_{i \in I})$ be a graph.
		%and we write $|\cS|\coloneqq |X|$ for their number. 
		Each subset $X'\subseteq X$ defines an \textit{induced subgraph} $\cS'$ whose vertices are $X'$ and whose $i$-edges are $E_i\cap (X')^2$. 
		For us \textit{subgraph} will always refer to an induced subgraph. 
		The graph with no vertices is denoted $\emptyset\in \age(\cS)$.
	\end{defi}
	\begin{defi}
		Let $\cS$, $\cS'$ be ($I$-multi-relational) graphs. A homomorphism $f\colon \cS\to \cS'$ is a map $X\to X'$ mapping $i$-edges to $i$-edges for all $i\in I$. An \textit{embedding of graphs} $f\colon \cS\hookrightarrow \cS'$ is an injective homomorphism, which also preserves non-edges. An \textit{isomorphism} $f\colon \cS\xrightarrow{\cong} \cS'$ is a bijective embedding. An \textit{automorphism} is a self-isomorphism. We denote by $\Aut(\cS)$ the \textit{automorphism group} of $\cS$. We view it as subgroup of $\Sym(X)$, the self-bijections of the vertices $X$.
	\end{defi}
	\begin{defi}
		The \textit{age of a graph} $\cS$ is the class
		\[
		\age(\cS)\coloneqq\{\cs \mid \text{$\cs$ finite graph}, \, \text{s.t.\ there exists embedding } \cs\hookrightarrow \cS \},
		\]
		of finite graphs, which embed into $\cS$.
	\end{defi}
	\begin{defi}
		A graph $\cS$ is called \textit{homogeneous} if for every two embeddings $\cs_1 \hookrightarrow \cS$, $\cs_2 \hookrightarrow \cS$ of finite graphs $\cs_1, \cs_2$ every isomorphism $f\colon \cs_1\to \cs_2$ can be lifted to an automorphism $\tilde{f}\in \Aut(\cS)$. Equivalently $\cS$ is called \textit{homogeneous} if for every $\cs\in \age(\cS)$ the natural action of $\Aut(\cS)$ on the \textit{set of embeddings} $\cS^{[\cs]}\coloneqq\{\iota\colon \cs\hookrightarrow \cS\}$ via post-composition is transitive.
	\end{defi}
	The following standard lemma gives the importance of homogeneity from an enumerative combinatorics point of view.
	\begin{lemm}[\cite{cameron90}] \label{lemma bijection isoclasses and orbits}
		Let $\cS$ be a homogeneous graph. Let $n\in \mN$. Then there is a one-to-one correspondence
		\[
			\{\cs\in \age(\cS)\mid |\cs|=n\}/{\cong} \, \xleftrightarrow[]{1\colon 1}\, 	\Aut(\cS) \backslash \binom{X}{n}
		\]
		between isomorphism classes of $n$-element graphs, which admit an embedding into $\cS$, and $\Aut(\cS)$-orbits on the set of $n$-element subsets of the vertices $X$ of $\cS$.
	\end{lemm}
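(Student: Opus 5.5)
We construct the correspondence explicitly, by sending an $n$-element subset $S\subseteq X$ to the isomorphism class of the induced subgraph on $S$, and then check that this map is well defined on orbits, surjective, and injective.

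\emph{Well-definedness.} For $S\in\binom{X}{n}$, let $\cS|_S$ denote the induced subgraph of $\cS$ on the vertex set $S$. The inclusion $S\subseteq X$ is an embedding $\cS|_S\hookrightarrow \cS$, so $\cS|_S\in\age(\cS)$ and $|\cS|_S|=n$. We set
\[
\Phi\colon \Aut(\cS)\backslash \tbinom{X}{n}\to \{\cs\in\age(\cS)\mid |\cs|=n\}/{\cong},\qquad G\cdot S\mapsto [\cS|_S].
\]
If $g\in\Aut(\cS)$, then $g$ preserves $i$-edges and non-edges for every $i\in I$. Hence $\restr{g}{S}\colon \cS|_S\to\cS|_{gS}$ is an isomorphism of graphs, and $\Phi$ does not depend on the choice of $S$ in its orbit.

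\emph{Surjectivity.} Let $\cs\in\age(\cS)$ with $|\cs|=n$, and choose an embedding $\iota\colon\cs\hookrightarrow\cS$. The image $S=\iota(\cs)$ is an $n$-element subset of $X$, since $\iota$ is injective. Because $\iota$ preserves both edges and non-edges, it is an isomorphism $\cs\xrightarrow{\cong}\cS|_S$. Thus $\Phi(G\cdot S)=[\cs]$.

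\emph{Injectivity.} This is the only step that uses homogeneity. Suppose $S_1,S_2\in\binom{X}{n}$ and $f\colon\cS|_{S_1}\xrightarrow{\cong}\cS|_{S_2}$ is an isomorphism. The two inclusions $\cS|_{S_1}\hookrightarrow\cS$ and $\cS|_{S_2}\hookrightarrow\cS$ are embeddings of finite graphs. By the definition of homogeneity, $f$ lifts to an automorphism $\tilde f\in\Aut(\cS)$ with $\restr{\tilde f}{S_1}=f$. Then $\tilde f(S_1)=S_2$, so $S_1$ and $S_2$ lie in the same $\Aut(\cS)$-orbit.

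The only point that needs care is to interpret ``lifting'' precisely as extension along the inclusion embeddings. Equivalently, one can use the transitivity formulation: $\Aut(\cS)$ acts transitively on $\cS^{[\cs]}$, and the two embeddings $\cs\hookrightarrow\cS$ obtained from the inclusions via $f$ differ by an automorphism. Everything else is routine bookkeeping.
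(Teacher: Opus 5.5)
Your proof is correct. The map $\Aut(\cS)\cdot S\mapsto[\cS|_S]$ is well defined because automorphisms preserve edges and non-edges, it is surjective because an embedding is an isomorphism onto the induced subgraph on its image, and it is injective exactly by homogeneity (lifting $f$ along the two inclusions). The paper gives no argument of its own and only cites \cite{cameron90}, and your argument is the standard proof of this fact.
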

	\begin{koro} \label{corollary oligomorphic}
		Let $\cS$ be a homogeneous graph. Assume that for all $n\in \mN$ only finitely many isomorphism classes of subgraphs $\cs\hookrightarrow \cS$ with $n$ vertices exist. Then $\Aut(\cS)\subseteq \Sym(X)$ is an oligomorphic permutation group.
	\end{koro}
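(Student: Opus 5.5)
The claim follows by combining Lemma \ref{lemma bijection isoclasses and orbits} with Lemma \ref{lemma oligomorphic}. Write $G=\Aut(\cS)\subseteq \Sym(X)$. By Lemma \ref{lemma oligomorphic}, it suffices to show that for every $n\in \mN$ the $G$-set $\binom{X}{n}$ is orbit-finite. This reduction matters because the hypothesis counts isomorphism classes of subgraphs, which correspond to unordered vertex sets.

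Fix $n\in\mN$. Since $\cS$ is homogeneous, Lemma \ref{lemma bijection isoclasses and orbits} gives a bijection between $G\backslash\binom{X}{n}$ and the set of isomorphism classes of graphs $\cs\in\age(\cS)$ with $|\cs|=n$.

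Every $\cs\in\age(\cS)$ is by definition isomorphic to an induced subgraph of $\cS$ with $n$ vertices. So its isomorphism class is one of the isomorphism classes of $n$-vertex subgraphs of $\cS$. By hypothesis there are only finitely many of these. Hence $\left|G\backslash\binom{X}{n}\right|<\infty$.

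Since $n$ was arbitrary, Lemma \ref{lemma oligomorphic} shows that $G$ is oligomorphic. No step is a genuine obstacle. The only point needing care is that the hypothesis and the age refer to the same objects, namely induced subgraphs up to isomorphism, and this is immediate from the definition of $\age(\cS)$.
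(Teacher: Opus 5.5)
Your proposal is correct and takes the same approach as the paper: Lemma~\ref{lemma bijection isoclasses and orbits} bounds $\bigl|\Aut(\cS)\backslash\binom{X}{n}\bigr|$ for each $n$, and Lemma~\ref{lemma oligomorphic} turns this into oligomorphicity. Your added remark that embeddings have induced subgraphs as images, so that $\age(\cS)$ and the hypothesis count the same isomorphism classes, just spells out a step the paper leaves implicit.
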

\begin{proof}
	By Lemma \ref{lemma bijection isoclasses and orbits} we have $|\Aut(\cS) \backslash \binom{X}{n}|<\infty$ for all $n\in \mN$. Now apply Lemma \ref{lemma oligomorphic}.
\end{proof}
\begin{defi} \label{definition homogeneuous oligomorphic}
	A graph $\cS$ is called \textit{homogeneous oligomorphic}, if it satisfies the following conditions:
	\begin{enumerate}
		\item \label{affe1} $\cS$ is homogeneous, \label{homogeneuous}
		\item \label{affe2} it has only finitely many isomorphism classes of $n$-element subgraphs for each $n\in \mN$. \label{oligomorphic}
	\end{enumerate}
	We define the \textit{rank-generating function of $\cS$} to be $r_{\cS}(q)\coloneqq r_{\Aut(\cS)}(q)$, the rank-generating function of its automorphism group cf.\ Definition \ref{definition rank generating function for group}.
\end{defi} 
From now on we fix a homogeneous oligomorphic graph $\cS$. In particular $\Aut(\cS)\subseteq \Sym(X)$ is an oligomorphic group.
	\begin{defi}
			Let $\cs_0,\cs_1,\cs_2\in \age(\cS)$ be finite graphs. Moreover let $\iota_1\colon \cs_0\hookrightarrow \cs_1$ and $\iota_2\colon\cs_0\hookrightarrow \cs_2$ be embeddings of graphs. An \textit{amalgamation} of $\iota_1$ and $\iota_2$ in $\age(\cS)$ consists of a finite subgraph $\cs_1\cup_{a, \cs_0}\cs_2\hookrightarrow \cS$ which admits embeddings $j_1\colon \cs_1 \hookrightarrow \am$, $j_2\colon \cs_2\hookrightarrow \am$ such that the diagram
			\[
			\begin{tikzcd}
				\cs_0 \arrow[r, "\iota_1", hook] \arrow[d, "\iota_2"', hook] & \cs_1 \arrow[d, "j_1", hook] \\
				\cs_2 \arrow[r, "j_2", hook]                               & \am.                       
			\end{tikzcd}
			\]
			commutes and such that every vertex of $\cs_1\cup_{a,\cs_0} \cs_2$ is in the image of $j_1$ or $j_2$, i.e.\ $j_1$ and $j_2$ are jointly surjective. An amalgamation is called a \textit{one-point amalgamation} if
			\[
				|\cs_1\setminus \iota_1(\cs_0)|=1=|\cs_2\setminus \iota_2(\cs_0)|.
			\]
	\end{defi} 
	\begin{defi}
		 	Let $\cs_0,\cs_1,\cs_2\in \age(\cS)$. Let $\iota_1\colon \cs_0\hookrightarrow \cs_1$, $\iota_2\colon \cs_0\hookrightarrow \cs_2$ be embeddings. Let $\mathsf{a}=(\am, j_1, j_2)$, $\mathsf{a}'=(\cs_1\cup_{a', \cs_0}\cs_2, j_1', j_2')$ be two amalgamations of $(\iota_1,\iota_2)$ in $\cS$. An \textit{isomorphism of amalgamations} $\ca\to \ca'$ is an isomorphism $\varphi\colon \cs_1\cup_{a, \cs_0}\cs_2\xrightarrow{\cong} \cs_1\cup_{a', \cs_0}\cs_2$ of graphs such that the following diagram commutes
		 	\[
		 		\begin{tikzcd}
		 			& \cs_1 \arrow[d, "j_1"', hook] \arrow[rdd, "j_1'", hook, bend left] &                      \\
		 			\cs_2 \arrow[r, "j_2", hook] \arrow[rrd, "j_2'"', hook, bend right] & { \cs_1\cup_{a, \cs_0}\cs_2} \arrow[rd, "{\varphi, \cong}"']             &                      \\
		 			&                                                                  & { \cs_1\cup_{a', \cs_0}\cs_2}.
		 		\end{tikzcd}
		 	\]
	\end{defi}
	Note that two amalgamations, whose underlying graphs are isomorphic may not be isomorphic as amalgamations.
	
	\begin{rema}
		Every homogeneous oligomorphic graph $\cS$ automatically satisfies in addition to Definition \ref{definition homogeneuous oligomorphic}, \ref{affe1}, \ref{affe2} the following finiteness property for the number of amalgamations:
		\begin{enumerate}
			\item[(3)] For every $\cs_0, \cs_1, \cs_2\in \age(\cS)$ and embeddings $\iota_1\colon \cs_0\hookrightarrow \cs_1$, $\iota_2\colon \cs_0\hookrightarrow \cs_2$ there exists an amalgamation $(\am, j_1,j_2)\in \age(\cS)$. Moreover only finitely many isomorphism classes of such amalgamations exist.
		\end{enumerate}
	\end{rema}

	\subsection{Measures on Graphs} \label{subsection: measures on graphs}
	Here we recall a different perspective on measures for automorphism groups of homogeneous graphs. Everything in this section except the one-point lemma (\ref{technical lemma}) is contained in \cite[\S6]{harmansnowden22} with more model-theoretic language.
	We fix a homogeneous oligomorphic graph $\cS$, whose vertices are denoted $X$. The following notion is the analogue of regular measure for graphs.
	\begin{defi}[\cite{harmansnowden22}] \label{defi R-measure}
		An R-measure for $\cS$ with values in $k$ is a rule which assigns to each graph $\cs\in \gr$ an element $\nu(\cs)\in k\setminus\{0\}$ such that the following hold:
		\begin{enumerate} 
			\item $\nu(\cs_1)=\nu(\cs_2)$ for all $\cs_1,\cs_2\in \gr$ with $\cs_1\cong \cs_2$, \label{condition 1 nu}
			\item $\nu(\emptyset)=1$, \label{condition 2 nu}
			\item Suppose $\cs_0,\cs_1,\cs_2\in \gr$ and $\iota_1\colon \cs_0\to \cs_1, \iota_2\colon \cs_0\to \cs_2$ are embeddings. Let $\ca_1,\ldots, \ca_r$ be representatives of isomorphism classes of all amalgamations of $(\iota_1,\iota_2)$.
			Then 
			\[
				\nu(\cs_1)\cdot \nu(\cs_2)= \nu(\cs_0)\cdot \sum_{i=1}^{r}\nu(\ca_i).
			\] \label{condition 3 nu}
		\end{enumerate}
	\end{defi}
	\begin{rema} \label{intuition for measure}
		Let $\cS$ is a finite homogeneous graph, for example the complete graph $K_m$ for $m\in \N$ or a complete multipartite graph. Assume $k$ is a field, whose characteristic does not divide the order of $\Aut(\cS)$, then $\nu(\cs)=|\cS^{[\cs]}|=|\{\cs \hookrightarrow \cS\}|$ is the unique R-measure, where $\cS^{[\cs]}=\{\iota\colon \cs\hookrightarrow \cS\}$ is the set of embeddings. The correct intuition for a measure for infinite graphs $\cS$ is that it is a well-behaved replacement for this cardinality. 
	\end{rema}
	We want to mention the following lemma, which is useful for computing R-measures for graphs.
	\begin{lemm}[One-point lemma, \cite{MeyerWojciechowski2026}] \label{technical lemma}
		Let $\nu\colon \gr\to k\setminus\{0\}$ as in Definition \ref{defi R-measure} be an assignment, which satisfies \eqref{condition 1 nu} and \eqref{condition 2 nu}, but satisfies \eqref{condition 3 nu} only for one-point amalgamations. Then $\nu$ satisfies \eqref{condition 3 nu} for all amalgamations, and in particular is an R-measure.
	\end{lemm}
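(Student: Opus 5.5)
We argue by induction, reducing an arbitrary amalgamation problem $(\iota_1\colon \cs_0\hookrightarrow \cs_1,\ \iota_2\colon \cs_0\hookrightarrow \cs_2)$ to one-point amalgamations. Write $m_1=|\cs_1\setminus\iota_1(\cs_0)|$ and $m_2=|\cs_2\setminus\iota_2(\cs_0)|$, and induct on $(m_1,m_2)$. If $m_1=0$ or $m_2=0$, there is exactly one amalgamation up to isomorphism, namely $\cs_2$ (respectively $\cs_1$) itself. In this case condition \eqref{condition 3 nu} reads $\nu(\cs_0)\nu(\cs_2)=\nu(\cs_0)\nu(\cs_2)$, which is trivial. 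The case $m_1=m_2=1$ is the hypothesis.

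First we reduce to $m_1=1$. Suppose $m_1\geq 2$ and choose an intermediate subgraph $\cs_0\subsetneq \cs_1'\subsetneq \cs_1$ with $|\cs_1\setminus \cs_1'|=1$. We will show that the amalgamations of $(\cs_1,\cs_2)$ over $\cs_0$ are in bijection with pairs $(\ca',\ca'')$. Here $\ca'$ is an (isomorphism class of) amalgamation of $\cs_1'$ and $\cs_2$ over $\cs_0$, and $\ca''$ is an amalgamation of $\cs_1$ and the underlying graph of $\ca'$ over $\cs_1'$. The bijection sends an amalgamation $\ca$ to its restriction $\ca'$ to the image of $\cs_1'\cup\cs_2$, together with $\ca$ viewed as an amalgamation over $\cs_1'$. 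We must check that this is a bijection on isomorphism classes. The subtle point is that isomorphisms of amalgamations are required to commute with the structure maps, so no automorphism factors appear, and homogeneity guarantees that all the required amalgams live in $\age(\cS)$.

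Granting the bijection, induction gives two identities:
\[
\nu(\cs_1')\nu(\cs_2)=\nu(\cs_0)\sum_{\ca'}\nu(\ca'),\qquad \nu(\cs_1)\nu(\ca')=\nu(\cs_1')\sum_{\ca''}\nu(\ca'').
\]
The first uses a smaller $m_1$, and the second has one side of size $1$ over $\cs_1'$, so it is covered by the next paragraph. Multiplying the first identity by $\nu(\cs_1)$ and substituting the second, then cancelling $\nu(\cs_1')\neq 0$, yields $\nu(\cs_1)\nu(\cs_2)=\nu(\cs_0)\sum_{\ca}\nu(\ca)$. This cancellation is where the nonvanishing of $\nu$ is used.

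It remains to treat $m_1=1$ with arbitrary $m_2$. We induct on $m_2$ in exactly the same way, splitting $\cs_2$ through an intermediate $\cs_2'$ with $|\cs_2\setminus\cs_2'|=1$. The inner step is then a one-point amalgamation over $\cs_2'$, which is covered by the hypothesis. The outer step has a smaller $m_2$ and is covered by induction. So the structure of the induction is to handle $m_1=1$ first by induction on $m_2$, and then general $m_1$ by induction on $m_1$. The main obstacle is verifying the bijection of isomorphism classes of iterated amalgamations. To do this, we realise every amalgamation concretely as a finite subgraph of $\cS$ containing fixed copies of the pieces, and identify isomorphism classes with orbits of the pointwise stabilizer of the fixed copy of $\cs_0$ or $\cs_1'$, using homogeneity. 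The bijection is then a statement about fibres of orbit maps.
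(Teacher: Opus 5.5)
Your proposal is correct and follows the paper's route: the paper's proof is only the remark that every amalgamation is built iteratively from one-point amalgamations, followed by induction, and your argument supplies the details. Those details are the splitting through an intermediate $\cs_1'$ (resp.\ $\cs_2'$), the bijection between amalgamations over $\cs_0$ and pairs $(\ca',\ca'')$, and the cancellation of $\nu(\cs_1')\neq 0$. Two minor points: the bijection can be checked directly without homogeneity (an isomorphism of amalgamations is unique when it exists, since the structure maps are jointly surjective, and the restricted amalgam lies in $\age(\cS)$ because ages are closed under induced subgraphs), and the inner steps may have one side of size $0$, which your base case already covers.
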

	\begin{proof}
		Every amalgamation can be iteratively constructed from one-point amalgamations. The statement follows by induction. 
	\end{proof}
	
	\begin{beis} \label{example R-measure on finite totally ordered sets}
	Consider the graph $\cS=(\mQ,<)$. Then $\age(\cS)$ is the class of finite totally ordered sets. Let $k$ be any field. There is a unique R-measure $\nu$ on this class. It assigns to the finite totally ordered set $\{1<2<\cdots < n\}$ the value $\nu(\{1<2<\cdots < n\})=(-1)^n$. This fits with the intuition from Remark \ref{intuition for measure} as follows. The value $\nu(\{1\})$ of the totally ordered set $\{1\}$ is $-1$, since $\cS^{[1]}=\mQ$ and $\mu(\mQ)=-1$ in Example \ref{example measure on Q}. 
	\end{beis}
	We recall an important theorem from \cite{harmansnowden22}, which gives the precise connection between regular measures on $\Aut(\cS)$ on R-measures on $\age(\cS)$.
	\begin{theo}[{\cite[Corollary 6.10]{harmansnowden22}}] \label{theorem equivalence of measures}
		Assume $\operatorname{char}(k)=0$. Let $G=\Aut(\cS)$. Moreover assume that every open subgroup $U\subseteq G$ contains $U(x_1,\ldots, x_n)$ for a finite set $\{x_1,\ldots, x_n\}\subseteq \cS$ such that the index $[U\colon U(x_1,\ldots, x_n)]$ is finite.
		Then there is a one-to-one correspondence:
		\[
			\{
				\text{Regular measures $\mu$ on $G$}\} \xleftrightarrow[]{1\colon 1} \{ \text{R-measures $\nu$ on $\age(\cS)$}
			\}.
		\]		
	\end{theo}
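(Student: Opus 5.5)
The plan is to build the two maps explicitly and then check that they are mutually inverse. The key objects are the sets of embeddings $\cS^{[\cs]}=\{\iota\colon \cs\hookrightarrow \cS\}$. By homogeneity each $\cS^{[\cs]}$ is a transitive $G$-set. It is smooth, because the stabilizer of $\iota$ is the basic open subgroup $U(\iota(x_1),\ldots,\iota(x_n))$. Conversely, for an enumerated finite tuple $\bar x=(x_1,\ldots,x_n)$ with induced subgraph $\cs$, we have $G/U(\bar x)\cong \cS^{[\cs]}$.

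\emph{From $\mu$ to $\nu$.} Given a regular measure $\mu$, I set $\nu(\cs)\coloneqq\mu(\cS^{[\cs]})$. Each axiom of Definition \ref{defi R-measure} is then checked in turn.
\begin{enumerate}
\item Regularity of $\mu$ gives $\nu(\cs)\neq 0$.
\item Isomorphism invariance of $\mu$ gives \eqref{condition 1 nu}.
\item Since $\cS^{[\emptyset]}=\{\pt\}$, normalization gives \eqref{condition 2 nu}.
\end{enumerate}
For \eqref{condition 3 nu}, consider the fibre product
\[
Y=\{(\iota_1',\iota_2')\in \cS^{[\cs_1]}\times \cS^{[\cs_2]} \mid \iota_1'\iota_1=\iota_2'\iota_2\}.
\]
This is a smooth orbit-finite $G$-set by Lemma \ref{lemma products of smooth}. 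Sending a pair to the induced subgraph on $\operatorname{im}\iota_1'\cup\operatorname{im}\iota_2'$ identifies the $G$-orbits of $Y$ with the isomorphism classes of amalgamations $\ca_i$, and identifies each orbit with $\cS^{[\ca_i]}$. Hence $\mu(Y)=\sum_i\nu(\ca_i)$, using that $\mu$ is additive on disjoint unions of orbits, as in \cite{harmansnowden22}.

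We also compute $\mu(Y)$ in a second way. Project $Y$ to $\cS^{[\cs_0]}$ and fix a point $e$ there, with stabilizer $U$. The fibre over $e$ is $F_1\times F_2$, where $F_j$ is the fibre of $\cS^{[\cs_j]}\to\cS^{[\cs_0]}$ over $e$. Multiplicativity applied to $\cS^{[\cs_j]}\to \cS^{[\cs_0]}$ gives $\mu(F_j)=\nu(\cs_j)/\nu(\cs_0)$, which is legitimate because $\nu(\cs_0)\neq0$.

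To get $\mu(F_1\times F_2)=\mu(F_1)\mu(F_2)$, project onto $F_1$ orbit by orbit. Over a point $f\in F_1$ the fibre is $F_2$, regarded as a $U_f$-set. Since shrinking the open subgroup does not change a $\hat G$-set, its measure is still $\mu(F_2)$. Combining the two computations gives $\nu(\cs_1)\nu(\cs_2)=\nu(\cs_0)\sum_i\nu(\ca_i)$, which is \eqref{condition 3 nu}.

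\emph{From $\nu$ to $\mu$.} Given an R-measure $\nu$, first define $\mu$ on a transitive set $G/U$. By hypothesis $U\supseteq V=U(\bar x)$ with $[U:V]<\infty$. Set
\[
\mu(G/V)=\nu(\cs),\qquad \mu(G/U)=\nu(\cs)/[U:V],
\]
where $\cs$ is the induced subgraph on $\bar x$; dividing by $[U:V]$ is allowed because $\operatorname{char}(k)=0$. For a transitive $U'$-set $U'/W$ with $U'\supseteq U(\bar y)$, use relative values instead. The $U(\bar y)$-orbits of tuples extending $\bar y$ correspond to embeddings $\cs_{\bar y}\hookrightarrow \cs$, and I assign them $\nu(\cs)/\nu(\cs_{\bar y})$. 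Then extend $\mu$ additively over orbits.

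\emph{Main obstacle: well-definedness.} The value must not depend on the choice of $V$ (and of $\bar y$). Given two choices $V=U(\bar x)$ and $V'=U(\bar x')$, I would pass to $V\cap V'=U(\bar x\bar x')$. The orbit $U\cdot(\bar x\bar x')$ decomposes into $V$-orbits that are indexed by one-point-at-a-time extensions. Using \eqref{condition 3 nu}, in the one-point form of Lemma \ref{technical lemma}, I would verify
\[
\nu(\cs_{\bar x\bar x'})\cdot[U:V]=\nu(\cs_{\bar x})\cdot[U:V\cap V'],
\]
and the symmetric identity with $\bar x'$ in place of $\bar x$. The same amalgamation bookkeeping proves multiplicativity for maps $U/W_1\to U/W_2$, after refining both subgroups to basic open subgroups. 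Conjugation invariance is immediate, because conjugation by $g$ moves $\bar x$ to $g\bar x$, which spans an isomorphic subgraph. Regularity follows since all values of $\nu$ are nonzero.

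\emph{Mutually inverse.} Starting from $\nu$, the measure $\mu$ returns $\nu$ by construction, since $\mu(\cS^{[\cs]})=\mu(G/U(\bar x))=\nu(\cs)$. Starting from a regular $\mu$, multiplicativity applied to $G/V\to G/U$ forces $\mu(G/U)=\mu(G/V)/[U:V]$. The same argument applied to relative sets shows that $\mu$ is determined by its values on the sets $\cS^{[\cs]}$, that is, by $\nu$.
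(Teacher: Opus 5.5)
Your proposal is correct and follows the same route as the paper. In one direction you set $\nu_\mu(\cs)=\mu(\cS^{[\cs]})$. In the other you define $\mu_\nu$ on transitive sets by choosing finite-index basic subgroups $U(\bar x)$, dividing ratios of $\nu$-values by the index, and extending additively; these are exactly the formulas the paper records before deferring to \cite{harmansnowden22}.

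Your fibre-product check of \eqref{condition 3 nu} for $\nu_\mu$ goes beyond the paper's sketch. By contrast, the well-definedness of $\mu_\nu$ when the acting subgroup is shrunk, which is where \eqref{condition 3 nu} is really used, stays at the same sketch level as in the paper.
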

	\begin{proof}
		We explain how the correspondence works. The full proof can be found in \cite[\S1 - \S6]{harmansnowden22} culminating in \cite[Corollary 6.10]{harmansnowden22}.
		
		A regular measure $\mu$ on $G$ is sent to the R-measure $\nu_{\mu}$, which assigns to each finite structure $\cs$ of cardinality $n$ the measure $\nu_\mu(\cs)\coloneqq \mu(\cS^{[\cs]})$. Here $\cS^{[\cs]}\coloneqq\{\iota\colon \cs\hookrightarrow \cS\}$ denotes the set of all embeddings of $\cs$ into $\cS$ viewed as $G$-set.
		
		A R-measure $\nu$ is sent to the regular measure $\mu_{\nu}$, which satisfies the following property. Let $(x_1,\ldots, x_n)\in X^{(n)}$ for some $n\in \mN$. We consider subgraph $\cs$ of $\cS$ with these vertices. Let $m\in \mN$ with $m\leq n$, and consider the subgraph $\cs'\subseteq \cs$ whose vertices are $x_1,\ldots, x_m$. One assigns to the $\hat{G}$-set $U(x_1,\ldots, x_m)/U(x_1,\ldots, x_n)$ the value
		\[
		\mu_{\nu}(U(x_1,\ldots, x_m)/U(x_1,\ldots, x_n))\coloneqq \frac{\nu(\cs)}{\nu(\cs')}.
		\]
		For an arbitrary inclusion $U\subseteq U'$ of open subgroups 
		one chooses $x_1,\ldots, x_n\in \cS$ such that $U(x_1,\ldots, x_n)\subseteq U$ has finite index, and $x_1',\ldots x_m'\in \cS$ such that $U(x_1',\ldots, x_m')\subseteq U'$ has finite index. We denote by $\cs$ the subgraph with vertices $\{x_1,\ldots, x_n\}$
		and by $\cs'$ the subgraph with vertices $\{x_1',\ldots, x_m'\}$.
		Then one assigns to the transitive $U'$-set $U'/U$
		\[
		\mu_{\nu}(U'/U))\coloneqq\frac{[U'\colon U(x_1',\ldots, x_n')]\cdot \nu(\cs)}{[U\colon U(x_1,\ldots, x_m)]\cdot \nu(\cs')}.
		\]
		This assignment is additively completed to disjoint unions to obtain a value $\mu_{\nu}(Y)$ for each smooth and orbit-finite $\hat{G}$-set $Y$.
	\end{proof}
	\begin{beis}[{\cite[\S14.6]{harmansnowden22}}] \label{example measure for S}
		We explain the measure from Example \ref{example measure for S_infty} in more detail. Assume $\operatorname{char}(k)=0$. Let $\lambda\in k$. There is a measure $\mu_{\lambda}$ on the oligomorphic group $S_\infty=\Sym(\N)$, which assigns to the $S_{\infty}$-set $\N$ the value $\mu_{\lambda}(\N)=\lambda$. 
		One can show that for every open subgroup $U$ there exists $n\in \mN$, such that $U$ is conjugate to $H\times U(1,\ldots, n)$ for some finite subgroup $H\subseteq S_n$. Given another open subgroup $U'$ with $U\subseteq U'$, one finds such $m\in \mN$ with $m\leq n$, such that $U'$ is conjugate to $H'\times U(1,\ldots, m)$ where $H'\subseteq S_m$ is finite. One then assigns to $U'/U$ the value
		\[
			\mu_{\lambda}(U'/U)=\frac{|H'|(\lambda-m) (\lambda-m-1)\cdots (\lambda-n+1)}{|H|},
		\]
		with $\mu_{\lambda}(U'/U)=\frac{|H'|}{|H|}$ for $n=m$.
		If $\mu_{\lambda}$ is regular, it comes from an R-measure $\nu_{\lambda}$ on the class of finite sets. This R-measure assigns to the finite set $\{1,\ldots, n\}$ the value $\lambda(\lambda-1)\cdots (\lambda-n+1)$. 
		We finish this example with some explicit values. 
		Let $n\in \mN$ then
		\[
			\mu_{\lambda}(\N\setminus\{1\})=\lambda-1, \quad \mu_{\lambda}(\N^n)=\lambda^n, \quad \mu_{\lambda}(\N^{(n)})=\lambda(\lambda-1)\cdots (\lambda-n+1)
		\]
		and $\mu_{\lambda}\left(\binom{\N}{n}\right)=\frac{\lambda\cdot (\lambda-1)\cdots (\lambda-n+1)}{n!}$.
	\end{beis}
	\begin{rema}
		Note that the proof of Theorem \ref{theorem equivalence of measures} only requires that the characteristic of $k$ is non-zero, if there are open subgroups, which are not of the form $U(x_1,\ldots, x_n)$ for some $n\in \mN$ and $x_1,\ldots, x_n\in \cS$. This is the reason why the R-measure $\nu$ on finite totally ordered sets from Example \ref{example R-measure on finite totally ordered sets} induces a regular measure on $\Aut(\mQ, <)$ for any field. Harman and Snowden use this in \cite{harmansnowden24delannoy} to introduce the Delannoy category.
	\end{rema}
	\section{Representation-theoretic Preliminaries} \label{section linearizing}
	We fix a field $k$. 
 	\subsection{Linearizing without Measures} \label{subsection linearizing without measures}
 	 Throughout this section $G$ is any group.
 	 \begin{defi}
 	 	We define $\Rep(G)$ to be the category of all $k$-linear \textit{representations} of $G$, i.e.\ $k$-vector spaces with a linear $G$-action.
 	 	Let $V,W\in \Rep(G)$. We denote by
 	 	\begin{enumerate}[label=\roman*), leftmargin=1cm]
 	 		\item $\Hom_G(V,W)\subseteq \Hom_k(V,W)$ the vector space of all \textit{$G$-morphisms},
 	 		\item $V^{G}=\{v\in V\mid gv=v \text{ for all } g\in G\}\subseteq V$ the \textit{subspace of $G$-invariants},
 	 		\item $k_{\operatorname{triv}}$ the $1$-dimensional trivial representation of $G$,
 	 		\item $V\otimes_k W$ the \textit{tensor product} with the usual $G$-action given on pure tensors by $g\cdot v\otimes w=gv\otimes gw$, where $g\in G$, $v\in V$, $w\in W$.
 	 	\end{enumerate}
 	 \end{defi}
 	 \begin{defi}
 	 	Let $Y$ be a $G$-set. Consider the set $\Maps(Y,k)$ of all maps $v\colon Y\to k$. We view $\Maps(Y,k)$ as a $G$-representation via the action $(g\cdot v)(y)\coloneqq v(g^{-1}y)$, where $g\in G$, $v\in \Maps(Y,k)$, $y\in Y$.
 	 	%Given a $G$-orbit $\cO\in G\backslash Y$ we set %$v_{\cO}=\sum_{g\in G}^{}$
 	 \end{defi}
 	 
 	 	Classically one would never consider the entire space of functions $Y\to k$ for infinite $G$-sets. Instead one would consider some well-behaved subspace of $\Maps(Y,k)$. The naive choice is the following:
  	\begin{defi}
  		Let $Y$ be a $G$-set. We define the \textit{naive linearization} $kY$ of $Y$ to be the $G$-subrepresentation
  		\[
  		kY\coloneqq\{v\mid v(y)=0 \text{ for all but finitely many $y\in Y$}\}\subseteq \Maps(Y,k).
  		\]
  		We denote for an element $y\in Y$ by $v_y\colon y'\mapsto \delta_{y,y'}$ the \textit{characteristic function}.
  		By definition of $kY$ the set $\{v_y \mid y\in Y\}$ forms a $k$-basis of $kY$, which we call the \textit{standard basis}. 
  	\end{defi}
	The following lemma explains the connection between linearizations and orbits.
	\begin{lemm} \label{lemma linearization}
		Let $Y$ be a $G$-set. The space $\Maps(Y,k)^{G}$ of invariants has a vector space basis labeled by all $G$-orbits
		\[
			\left\{v_{\cO} \mid \cO\in G\backslash Y\right\}, \quad \text{where } \,  v_{\cO}(y)\coloneqq \begin{cases}
				1 & \text{if $y\in \cO$}	\\
				0 & \text{otherwise. }
			\end{cases} \text{for $y\in Y$.} 
		\]
		In contrast the space $(kY)^{G}$ has a basis labeled by finite $G$-orbits
		\[
			\{
			v_{\cO}=\sum_{y\in \cO}v_y \mid \cO\in G\backslash Y, \text{ $\cO$ is finite}.
			\}
		\]
	\end{lemm}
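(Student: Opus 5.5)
The plan is to prove both statements of Lemma \ref{lemma linearization} directly from the definition of the $G$-action $(g\cdot v)(y)=v(g^{-1}y)$. We first characterize invariant functions. A map $v\colon Y\to k$ satisfies $g\cdot v=v$ for all $g\in G$ if and only if $v(g^{-1}y)=v(y)$ for all $g\in G$ and $y\in Y$. This says exactly that $v$ is constant on each $G$-orbit $\cO\in G\backslash Y$. So $\Maps(Y,k)^{G}$ is identified with the space of functions $G\backslash Y\to k$, via $v\mapsto (\cO\mapsto v|_{\cO})$.

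Next we check that the $v_{\cO}$ form a basis. Since $Y=\bigsqcup_{\cO\in G\backslash Y}\cO$ is a disjoint union, each invariant $v$ can be written as the pointwise combination
\[
v=\sum_{\cO\in G\backslash Y} c_{\cO}\, v_{\cO}, \qquad c_{\cO}\coloneqq v(y) \text{ for any } y\in \cO.
\]
This sum may be infinite, but it is well defined pointwise because each $y$ lies in exactly one orbit. Linear independence of the $v_{\cO}$ is clear, since their supports are disjoint. One point needs care: if $G\backslash Y$ is infinite, the $v_{\cO}$ span $\Maps(Y,k)^G$ only in this pointwise sense, not as a Hamel basis. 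In the settings used later the orbit sets are finite (orbit-finite $Y$), so the statement should be read with that in mind. Alternatively, we interpret ``basis'' as identifying $\Maps(Y,k)^G\cong \Maps(G\backslash Y,k)$. I would state this interpretation explicitly; it is the only subtle step.

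For $(kY)^G$, we intersect with the finite-support condition. An invariant $v$ is constant on orbits, so if $v\in kY$ and $c_{\cO}\neq 0$, then $\cO\subseteq \operatorname{supp}(v)$ is finite. Moreover only finitely many such orbits can occur. Hence $v$ is a finite linear combination of the $v_{\cO}=\sum_{y\in\cO}v_y$ with $\cO$ finite. Conversely, each such $v_{\cO}$ is finitely supported and $G$-invariant. Linear independence again follows from disjointness of supports, which finishes the proof; there is no real obstacle here.
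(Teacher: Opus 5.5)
Your proof is correct, but it argues differently from the paper. The paper does not compute invariants directly. It treats the lemma as the special case $Y=\{\pt\}$ of Lemma~\ref{lemma missing homomorphisms}, using $\Hom_G(k_{\operatorname{triv}},V)=V^G$; Example~\ref{example: All the orbits!} carries this out for $Y'=X$. There, $G$-orbits on $Y'\times\{\pt\}\cong Y'$ index a basis of $\cC(Y')^G=\Maps(Y',k)^G$, and the $(\{\pt\}\to Y')$-small orbits are exactly the finite ones. That Hom-lemma is itself proved by your observation that coefficients are constant on orbits, applied to $Y'\times Y$. It needs $G$ oligomorphic and $Y,Y'$ smooth and orbit-finite, so that the support decomposes into finitely many orbits.

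Your direct argument is more elementary and works for an arbitrary group $G$ and arbitrary $G$-set $Y$, as the lemma is stated. In exchange it makes visible something the paper's route hides in its hypotheses. When $G\backslash Y$ is infinite, the $v_{\cO}$ are linearly independent but do not span $\Maps(Y,k)^G\cong\Maps(G\backslash Y,k)$; for instance, the constant function $1$ is not a finite combination of them. So the first claim is a Hamel-basis statement only for orbit-finite $Y$, exactly as you flag. Your treatment of $(kY)^G$ needs no such restriction.

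The paper's route buys uniformity instead. The same computation describes all of $\Hom_G(kY,\cC(Y'))$ and $\Hom_G(kY,kY')$, which is what the later construction of $\mu$-lifts needs.
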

	\begin{proof}
		This lemma is a special case of Lemma \ref{lemma missing homomorphisms}. We explain how to deduce it as part of Example \ref{example: All the orbits!}.
	\end{proof}
	Lemma \ref{lemma linearization} says that from an orbit counting point of view one should take $\Maps(Y,k)$ as linearization of $Y$ and not $kY$ to remember infinite $G$-orbits. The space of all maps is way to large: its dimension is uncountable even for countably infinite $Y$. We will explain how \cite{harmansnowden22} solve this problem in Subsection \ref{subsection linearizing with measures} by introducing the Schwartz space $\cC(Y)$, which sits in between $kY$ and $\Maps(Y,k)$.
 	\begin{lemm} \label{lemma linearizing product}
 		Let $Y_1$, $Y_2$ be $G$-sets. 
 		There are natural isomorphisms
 		\[
 		kY_1\oplus kY_2\cong k(Y_1\sqcup Y_2), \quad  kY_1 \otimes kY_2\cong k(Y_1\times Y_2).
 		\]
 	\end{lemm}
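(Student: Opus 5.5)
The plan is to write down explicit linear maps on the standard bases, check that they are mutually inverse, and then check $G$-equivariance. Since $kY$ is spanned by the characteristic functions $v_y$, every linear map out of $kY$ is determined by its values on these basis vectors. So it suffices to specify the maps on $\{v_y\}$ and compare bases.

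For the direct sum, define
\[
kY_1\oplus kY_2\to k(Y_1\sqcup Y_2),\qquad (v,w)\mapsto v\sqcup w,
\]
where $v\sqcup w$ is the function that agrees with $v$ on $Y_1$ and with $w$ on $Y_2$. This function is again finitely supported. The map sends the basis $\{(v_{y_1},0)\}\cup\{(0,v_{y_2})\}$ bijectively onto the basis $\{v_y \mid y\in Y_1\sqcup Y_2\}$, so it is an isomorphism. Equivariance holds because $G$ acts on $Y_1\sqcup Y_2$ componentwise, so $(g\cdot(v\sqcup w))(y)=(v\sqcup w)(g^{-1}y)$ is computed summand by summand.

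For the tensor product, use the bilinear map
\[
kY_1\times kY_2\to k(Y_1\times Y_2),\qquad (v,w)\mapsto \bigl((y_1,y_2)\mapsto v(y_1)w(y_2)\bigr).
\]
The product function is supported on $\operatorname{supp}(v)\times\operatorname{supp}(w)$, which is finite, so the map lands in $k(Y_1\times Y_2)$. By the universal property it induces a linear map $kY_1\otimes kY_2\to k(Y_1\times Y_2)$ with $v_{y_1}\otimes v_{y_2}\mapsto v_{(y_1,y_2)}$. The elements $v_{y_1}\otimes v_{y_2}$ form a basis of the tensor product of two vector spaces with bases $\{v_{y_1}\}$ and $\{v_{y_2}\}$. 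Hence this map takes a basis bijectively onto a basis and is an isomorphism. For equivariance it suffices to check on pure tensors:
\[
g\cdot(v\otimes w)=gv\otimes gw\mapsto v(g^{-1}y_1)\,w(g^{-1}y_2),
\]
and this is exactly $g$ applied to the product function, since $g^{-1}(y_1,y_2)=(g^{-1}y_1,g^{-1}y_2)$ under the diagonal action.

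Naturality in $Y_1$ and $Y_2$ is checked on basis vectors. A $G$-map $f\colon Y\to Y'$ induces pushforward $v_y\mapsto v_{f(y)}$, and both isomorphisms visibly commute with pushforward on the basis elements.

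No step is a real obstacle. The only point needing care is that the naive linearization $kY$ consists of finitely supported functions, so the tensor product lands in finitely supported functions on the product. For the full function spaces $\Maps(\blank,k)$ the analogous tensor statement would fail.
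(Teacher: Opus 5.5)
Your proof is correct. The paper states this lemma without any proof and treats it as a standard fact. Your basis-to-basis argument is exactly the evident one it relies on: the bijection $v_{y_1}\otimes v_{y_2}\mapsto v_{(y_1,y_2)}$, the equivariance check under the diagonal action, and the observation that finite support is what makes the tensor statement hold.
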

	We finally want to recall an important definition.
	\begin{defi}[{\cite[\S5]{cameron90}}] \label{defi orbit algebra}
		Let $G\subseteq \Sym(X)$ be an oligomorphic permutation group.
		Let $Y$ be a smooth and orbit-finite $G$-set. We define \textit{the orbit algebra} as the vector space
		\[
			\H_{G,Y}^{\star}\coloneqq \bigoplus_{n\geq 0}H^n_{G,Y} \quad \text{ with } \quad H^n_{G,Y}\coloneqq \Maps\left(\binom{Y}{n}, \C \right)^{G} \! \! \!.
		\]
		For each $m,n\in \mN$ we define a product $\cdot\colon \H_{G,Y}^{m}\otimes_\C \H_{G,Y}^{n}\to \H_{G,Y}^{n+m}$ by setting for $S\in \binom{Y}{m+n}$
		\[
			(v_1\cdot v_2)(S)\coloneqq \sum_{S=S_1\sqcup S_2, |S_1|=m, |S_2|=n}v_1(S_1)v_2(S_2).
		\]
		This turns $\H_{G,Y}^{\star}$ into a commutative, graded algebra.
	\end{defi}
	
 \subsection{Linearizing with Measures} \label{subsection linearizing with measures}
We fix a set $X$ and an oligomorphic group $G\subseteq \Aut(X)$. This section is about Schwartz spaces introduced in \cite{harmansnowden22}. Lemma \ref{lemma missing homomorphisms} explains how these fix a flaw of naive linearizations of $G$-sets, which don't contain all the morphisms/invariants, which one would expect coming from the representation theory of finite groups.

 \begin{defi}
 	Let $Y$ be a smooth (but not necessarily orbit-finite) $G$-set. 
 	The \textit{Schwartz space} $\cC(Y)$ is defined to be the subset of $\Maps(Y,k)$ given by
 	\[
 	\cC(Y)\coloneqq \{ f \mid \text{$f$ is $U$-invariant for an open subgroup $U\subseteq G$}\}.
 	\]
 	Here a function $f\colon Y\to k$ is called \textit{ $U$-invariant}, if it is fixed by the action of $U$ on $\Maps(Y,k)$.
 \end{defi}
	\begin{lemm}
		We have inclusions of subrepresentations
		\[
			kY\subseteq \cC(Y)\subseteq \Maps(Y,k).
		\]
	\end{lemm}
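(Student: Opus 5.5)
We need to show $kY\subseteq \cC(Y)\subseteq \Maps(Y,k)$ as subrepresentations, where $Y$ is a smooth $G$-set. The plan is to check three things: the second inclusion, that $\cC(Y)$ is a $G$-stable subspace, and the first inclusion.

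The inclusion $\cC(Y)\subseteq \Maps(Y,k)$ holds by definition. So the first step is to check that $\cC(Y)$ is a $G$-stable linear subspace.

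\emph{Linear subspace.} If $f_1$ is $U_1$-invariant and $f_2$ is $U_2$-invariant, then $af_1+bf_2$ is $U_1\cap U_2$-invariant. The group $U_1\cap U_2$ is open, since it is a finite intersection of open subgroups. The zero function is $G$-invariant.

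\emph{$G$-stability.} If $f$ is $U$-invariant and $g\in G$, then $g\cdot f$ is $gUg^{-1}$-invariant, because $(gug^{-1})\cdot(g\cdot f)=g\cdot(u\cdot f)=g\cdot f$. It remains to see that $gUg^{-1}$ is open. This holds because conjugation by $g$ is a homeomorphism of the topological group $G$ (Proposition \ref{topological properties}). Concretely, $gU(x_1,\ldots,x_n)g^{-1}=U(gx_1,\ldots,gx_n)$, so the criterion a) of Proposition \ref{topological properties} applies.

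For $kY\subseteq \cC(Y)$, it suffices by linearity to treat the standard basis vectors $v_y$. Here we use smoothness: the stabilizer $G_y$ is open. For $u\in G_y$ and $y'\in Y$ we have $(u\cdot v_y)(y')=v_y(u^{-1}y')=\delta_{y,u^{-1}y'}=\delta_{uy,y'}=\delta_{y,y'}$. So $v_y$ is $G_y$-invariant. A finite linear combination $\sum a_i v_{y_i}$ is then invariant under $\bigcap_i G_{y_i}$, which is open. Finally, $kY$ is a subrepresentation of $\Maps(Y,k)$, and hence also of $\cC(Y)$.

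There is no real obstacle here. The only point needing care is the openness of conjugates and of finite intersections of open subgroups, and both follow from the description of open subgroups via basic open subgroups.
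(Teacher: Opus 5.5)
Your proof is correct and follows the paper's argument step by step. You show closure under linear combinations via intersections of open subgroups, $G$-stability via invariance under the conjugate $gUg^{-1}$, and $kY\subseteq \cC(Y)$ via invariance of $v_y$ under the open stabilizer $G_y$; you additionally justify that conjugates and finite intersections of open subgroups are open, using $gU(x_1,\ldots,x_n)g^{-1}=U(gx_1,\ldots,gx_n)$, which the paper leaves implicit.
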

\begin{proof}
	First note that the Schwartz space is a subspace: It is closed under scalar multiplication, and the sum of two vectors is invariant under the intersection of two open subgroups.
	It is also a subrepresentation. Indeed let $f\in \cC(Y)$ be $U$-invariant for some open subgroup $U\subseteq G$, and let $g\in G$. Then $gf$ is invariant under $gUg^{-1}$, which is again an open subgroup of $G$. The naive linearization $kY$ is a subspace of the Schwartz space $\cC(X)$, since $v_y$ is fixed by the point stabilizer $G_y$, which is open since $Y$ is smooth.
\end{proof}
 \begin{beis} \label{example Schwartz space}
 	Let $G=S_{\infty}=\Sym(\N)$ and consider $Y=\N$. Then 
 	\[
 		k\N=\spann \{v_n \mid n\in \N\}\subsetneq \cC(\N)=\spann \{v_n \mid n\in \N\} \oplus  \spann \{\sum_{n\in \N}v_n\}.
  	\] 
  	Here $\sum_{n\in \N}v_n$ stands for the $S_{\infty}$-invariant function $v$ with $v(n)=1$ for all $n\in \N$.
 \end{beis}
\begin{rema}
	The price one has to pay for working with Schwartz spaces is Lemma \ref{lemma linearizing product}. Let $Y$, $Y'$ be smooth, orbit-finite $G$-sets. We have $\cC(Y\sqcup Y')\cong \cC(Y)\oplus \cC(Y')$. There also exists a monomorphism $\cC(Y)\otimes_k \cC(Y')\hookrightarrow \cC(Y\times Y')$, but it is general not an isomorphism. 
%	For instance $\cC(\N)^{\otimes 2}\ncong \cC(\N^{2})$ as representation of $S_{\infty}$. Hence Lemma \ref{lemma linearizing product} does not carry over in full generality.
\end{rema}
 We will fix some notation for possibly infinite sums as in Example \ref{example Schwartz space}. 
 \begin{defi} \label{notation infinite sums}
 	Let $Y$ be a smooth $G$-set. Let $v\in \cC(Y)$. We define the coefficient $a_y\coloneqq v(y)$ for $y\in Y$.
 	We will from now on write $v=\sum_{y\in Y}a_yv_y$ for $v$ ignoring whether $v\in kY$ (i.e.\ whether this is an actual sum of basis vectors). 
 \end{defi} 
 \begin{beis} \label{defi finite decomposition}
 	Let $Y$ be a smooth $G$-set. Let $v=\sum_{y\in Y}a_y v_y\in \cC(Y)$. Choose an open subgroup $U$ which fixes $v$. Then we can decompose $Y$ into finitely many $U$-orbits $\cO$ and write $a_{\cO}=a_y$ if $y\in \cO$ (this is independent of choice of $y$ since $v$ is fixed by $U$). Then $v$ can be written as a finite sum of `infinite sums'
 	\[
 		v=\sum_{\cO\in U\backslash Y}a_{\cO} \sum_{y\in \cO} v_y.
 	\]	
 	%We call this a \textit{finite decomposition} of $v$.
 \end{beis}
We fix smooth and orbit-finite $G$-sets $Y$ and $Y'$. 
\begin{defi} 
	Let $\cO\in G\backslash (Y'\times Y)$ be an orbit. We say $\cO$ is \textit{$(Y\to Y')$-small}, if it satisfies the following condition: 
	\begin{equation} \label{condition finiteness}
		\text{For all $y\in Y$ the set $\{y'\in Y'\mid (y',y)\in \cO\}$ is finite.}
	\end{equation}
\end{defi}
\begin{lemm} \label{lemma missing homomorphisms}
	We have $\dim_k\Hom_G(kY, \cC(Y'))= |G\backslash (Y'\times Y)|$
	and
	\[
	\dim_k\Hom_G(kY, kY')=|\{\cO\in  G\backslash (Y'\times Y) \mid \text{$\cO$ is $(Y\to Y')$-small} \}|.
	\]
	A $k$-basis of $\Hom_G(kY, \cC(Y))$ is given by the \textit{indicator functions} $\delta_\cO$ of orbits $\cO\in G\backslash (Y'\times Y)$. Here $\delta_{\cO}$ is defined on the basis of $kY$ via
	\[
	\delta_\cO(v_y)=\sum_{y'\in Y\colon (y',y)\in \cO}v_{y'}\in \cC(Y').
	\]
\end{lemm}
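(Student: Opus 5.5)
The proof reduces both dimension counts to Frobenius-type reciprocity plus a description of invariants in a Schwartz space. A $G$-morphism $\varphi\colon kY\to \cC(Y')$ is determined by the images $\varphi(v_y)$ of the standard basis vectors. Since $\varphi$ is $G$-equivariant and $G_y$ fixes $v_y$, the vector $\varphi(v_y)$ is $G_y$-invariant. Conversely, choose one representative $y$ in each $G$-orbit of $Y$ (finitely many, as $Y$ is orbit-finite). Any choice of $G_y$-invariant vectors $w_y\in \cC(Y')$ extends uniquely to a $G$-morphism by setting $\varphi(v_{gy})=gw_y$. This is well defined exactly because $w_y$ is $G_y$-invariant. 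This gives
\[
\Hom_G(kY,\cC(Y'))\cong \bigoplus_{y\in G\backslash Y}\cC(Y')^{G_y},
\]
and the same argument with $kY'$ in place of $\cC(Y')$ gives the analogous decomposition with $(kY')^{G_y}$.

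Next I describe $\cC(Y')^{G_y}$. Since $G_y$ is open, Proposition~\ref{topological properties}~b) shows it is oligomorphic, and $Y'$ is a smooth $G_y$-set. The analogue of Lemma~\ref{lemma linearization} for Schwartz spaces then says the following. A $G_y$-invariant function $Y'\to k$ is constant on $G_y$-orbits. It automatically lies in $\cC(Y')$, since it is $U$-invariant for the open subgroup $U=G_y$. Moreover $Y'$ has only finitely many $G_y$-orbits: $G_y\backslash Y'$ embeds into $G\backslash(Y'\times Y)$ via $\cO\mapsto G\cdot(\cO\times\{y\})$. So $\cC(Y')^{G_y}$ has basis the indicator functions of the $G_y$-orbits on $Y'$. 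Similarly $(kY')^{G_y}$ has basis the indicator functions of the \emph{finite} $G_y$-orbits.

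It remains to match these orbits with $G\backslash(Y'\times Y)$. The key bijection is
\[
\bigsqcup_{y\in G\backslash Y} G_y\backslash Y' \;\longleftrightarrow\; G\backslash (Y'\times Y),\qquad G_y y'\mapsto G(y',y).
\]
It is surjective because every pair can be moved by $G$ so that its second coordinate is the chosen representative $y$. It is injective because $g(y'_1,y)=(y'_2,y)$ forces $g\in G_y$. Under this bijection, the $G_y$-orbit corresponding to $\cO$ is precisely the fibre $\{y'\mid (y',y)\in\cO\}$. So the orbit is finite if and only if $\cO$ is $(Y\to Y')$-small; smallness is independent of the choice of $y$ in its orbit, since fibres over $y$ and $gy$ differ by the bijection $g$. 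Counting basis elements gives both dimension formulas.

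Tracing through the isomorphisms, the basis vector attached to $\cO$ is the morphism sending $v_y$ to the indicator of the fibre over $y$, i.e.\ $\delta_\cO$ as written in the statement. It is equivariant because $\delta_\cO(v_{gy})=g\,\delta_\cO(v_y)$, as the fibre over $gy$ is $g$ applied to the fibre over $y$. The only real subtlety is checking that the indicator of a $G_y$-orbit lies in $\cC(Y')$, and hence is a legitimate value of a morphism. This is why the Schwartz space, rather than $kY'$, captures all orbits, and the third step above handles it.
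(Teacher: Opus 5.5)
Your proof is correct, but it is organized differently from the paper's. The paper never picks orbit representatives in $Y$. It writes $f(v_y)=\sum_{y'}a_{y',y}v_{y'}$ and notes that $G$-equivariance is equivalent to the kernel $(y',y)\mapsto a_{y',y}$ being constant on $G$-orbits of $Y'\times Y$. It then reads off the basis $\{\delta_\cO\}$ from the orbit-finiteness of $Y'\times Y$ (Lemma \ref{lemma products of smooth}). The $kY'$ case is the extra condition that, for fixed $y$, only finitely many $a_{y',y}$ are nonzero.

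You instead split $kY$ into transitive pieces and use the reciprocity $\Hom_G(kY,V)\cong\bigoplus_{y\in G\backslash Y}V^{G_y}$. You then compute invariants of the open subgroup $G_y$ and transport them along the stabilizer bijection between $G_y\backslash Y'$ and the $G$-orbits on $Y'\times Y$ lying over $Gy$.

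The paper's route is shorter, and it sets up the matrix-coefficient language $a_{y',y}$ on which Definition \ref{definition f mu} and Proposition \ref{properties f mu} are built. Yours is more modular, since the reciprocity step holds for any representation $V$. It also makes explicit two points the paper leaves implicit:
\begin{enumerate}
\item $\delta_\cO(v_y)$ genuinely lies in $\cC(Y')$, because it is invariant under the open subgroup $G_y$;
\item $(Y\to Y')$-smallness of $\cO$ is exactly finiteness of the corresponding $G_y$-orbit.
\end{enumerate}
The paper derives Lemma \ref{lemma linearization} from this lemma (Example \ref{example: All the orbits!}). So your description of $\cC(Y')^{G_y}$ must be argued directly rather than cited from Lemma \ref{lemma linearization}, and as written it is. You also correctly read the statement's $\cC(Y)$ and $y'\in Y$ as $\cC(Y')$ and $y'\in Y'$.
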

\begin{proof}
	An element $f\in \Hom_G(kY,\cC(Y'))$ is determined by the images $f(v_y)$ of the standard basis elements $v_y$ where $y\in Y$. Let $y\in Y$ and write $f(v_y)=\sum_{y'\in Y'} a_{y',y} v_{y'}$
	for some coefficients $a_{y',y}\in k$, utilizing the notation for infinite sums in Definition \ref{notation infinite sums}. The condition $g\cdot f(v_y)=f(g\cdot v_y)=f(v_{g y})$ for all $g\in G$ is equivalent to the condition $a_{y',gy}=a_{g^{-1}y',y}$, which is equivalent to $a_{gy',gy}=a_{y',y}$ for all $g\in G$ and $(y',y)\in Y'\times Y$. Hence $G$-equivariance is equivalent to the condition that the coefficients are constant along $G$-orbits on $Y'\times Y$.
	We can consider the $G$-subset $\{(y',y)\mid a_{y',y}\neq 0\}\subseteq Y'\times Y$ and decompose it into finitely many $G$-orbits $\cO_1,\ldots, \cO_r$ by Lemma \ref{lemma products of smooth}. We fix representatives $(y_i',y_i)$ of each $\cO_i$. By the above consideration we have a unique decomposition $f=\sum_{i=1}^{r}a_{y_i',y_i}\delta_{\cO_i}$. Hence the indicator functions of $G$-orbits give a basis of $\Hom_G(kY,\cC(Y'))$. By definition of $kY'$ the coefficient $a_{y',y}$ is non-zero for only finitely many $
	y'\in Y'$ once $y$ is fixed. This finiteness allows in the decomposition only for those $G$-orbits $\cO_i$ which satisfy condition \ref{condition finiteness}, i.e.\ $(Y\to Y')$-small $G$-orbits.
\end{proof}
\begin{beis} \label{example: All the orbits!}
	We explain how Lemma \ref{lemma missing homomorphisms} implies Lemma \ref{lemma linearization}. Consider $Y=\{\operatorname{pt}\}$ and $Y'=X$. Then $kY=k_{\operatorname{triv}}$ is the trivial representation. For a $G$-representation $V$ one can identify $\Hom_G(k_{\operatorname{triv}}, V)$ with the subspace $V^{G}\subseteq V$ of $G$-invariants. Lemma \ref{lemma missing homomorphisms} says that $(kX)^G$ has a basis labeled by $(\{\operatorname{pt}\}\to X)$-small $G$-orbits in $X\times \{\operatorname{pt}\}$, which are precisely finite $G$-orbits in $X$. In contrast $\cC(X)^{G}$ has a basis labeled by all $G$-orbits. Hence, the Schwartz space $\cC(X)$ should be imagined as a nice completion of $kX$.
\end{beis}
  The problem is that morphisms $f\colon kY\to \cC(Y')$ and $g\colon kY'\to \cC(Y'')$ can not be naively composed. For the rest of the section we fix a measure $\mu$ on $G$ with values in $k$. We will use $\mu$ to extend morphisms $f\colon kY\to \cC(Y')$ to morphisms $f_{\mu}\colon \cC(Y)\to \cC(Y')$. 
 \begin{defi} \label{definition f mu}
 	Let $f\in \Hom_G(kY, \cC(Y'))$ and fix coefficients $a_{y',y}$ such that $f(v_y)=\sum_{y'\in Y'}{a_{y', y}}v_{y'}$ for $y\in Y$. 
 	We define its \textit{$\mu$-lift}
 	$f_{\mu}\colon \cC(Y)\to \cC(Y')$ in the following way. Let $v=\sum_{y\in Y}\lambda_y v_y\in \cC(Y)$ and choose an open subgroup $U\subseteq G$ such that $v$ is $U$-invariant. Let $y'\in Y'$. We consider the $(U(y')\cap U)$-subset 
 	\[
 		\operatorname{Supp}_{f,y',v}\coloneqq \{y\in Y \mid a_{y', y}\neq 0, \, \lambda_y\neq 0\}\subseteq Y,
 	\]
 	which is smooth and orbit-finite, since $Y$ is. We decompose it into $(U(y')\cap U)$-orbits 
 	\[
 		\operatorname{Supp}_{f,y',v}=\cO_{y',v,1}\sqcup\cO_{y',v,2} \sqcup \cdots \sqcup \cO_{y',v,r(y')} 
 	\]
 	for some $r(y')\in \N$. We choose representatives $y_{y',v,i}\in \cO_{y',v,i}$ of these orbits. We set
 	\[
 		f_\mu(v)\coloneqq \sum_{y'\in Y'} \sum_{i=1}^{r(y')}\mu(\cO_{y',v,i})a_{y',y_{y',v,i}}\lambda_{y_{y',v,i}}v_{y'}\in \cC(Y').
 	\]
 \end{defi}
 \begin{beis}
 	This example is dual to Example \ref{example: All the orbits!}. Every $G$-orbit on $X$ can be viewed as an orbit $\cO$ on $\{\operatorname{pt}\}\times X$ and hence gives a characteristic function $\delta_{\cO}\colon kX\to \cC({\pt})=k(\{\pt\})=k_{\operatorname{triv}}$, see Lemma \ref{lemma missing homomorphisms}. Consider $\sum_{x\in \cO}v_x\in \cC(X)$. Then $(\delta_{\cO})_{_\mu}(\sum_{x\in \cO}v_x)=\mu(\cO)\cdot 1\in k_{\operatorname{triv}}$.
 \end{beis}
 The following technical proposition is crucial.
 \begin{prop} \label{properties f mu}
 	Let $f$ be as in Definition \ref{definition f mu}. The following statements hold about $\mu$-lifts:
 	\begin{enumerate}
 		\item $f_{\mu}$ is well-defined.
 		\item \label{property ii} $f_{\mu}$ is a morphism of $G$-representations.
 		\item $\restr{f_{\mu}}{kY}=f$.
 		\item Consider the inclusion $\iota\colon kY\to \cC(Y)$. Then $\iota_{\mu}=\id_{\cC(Y)}$.
 		\item \label{property v} Given $G$-morphisms $f\colon kY\to \cC(Y')$, $g\colon kY'\to \cC(Y'')$, then $g_{\mu}\circ f_{\mu}=(g_{\mu}\circ f)_{\mu}$. 
 	\end{enumerate} 
 \end{prop}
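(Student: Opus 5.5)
The cleanest route is to recast $f_\mu$ as an integral against $\mu$, in the style of \cite{harmansnowden22}. For a $U$-invariant function $\phi$ on a smooth orbit-finite $U$-set $Z$, we set $\int_Z \phi\,d\mu \coloneqq \sum_{\cO\in U\backslash Z}\mu(\cO)\phi(y_\cO)$. Then the definition reads
\[
f_\mu(v)(y')=\int_{Y} a_{y',y}\lambda_y\,d\mu(y),
\]
where the integrand is $U(y')\cap U$-invariant. Indeed, $G$-equivariance of $f$ gives $a_{gy',gy}=a_{y',y}$, so $a_{y',\cdot}$ is invariant under the stabilizer of $y'$. Note that $Y$ is orbit-finite for any open subgroup, by Roelcke precompactness (Proposition \ref{topological properties}). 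Everything then reduces to basic properties of this integral.

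\textbf{Well-definedness (i).} I will first show that $\int_Z\phi\,d\mu$ does not depend on the open subgroup chosen. Since any two open subgroups have an open intersection, it suffices to compare $U$ with a smaller open $V\subseteq U$. Let $\cO$ be a $U$-orbit and split it into $V$-orbits $\cO_1,\ldots,\cO_s$. The claim is that $\mu(\cO)=\sum_j\mu(\cO_j)$, where the left side is computed as a $U$-set and the right side as $V$-sets. This holds because a $\hat G$-set does not change when the group is shrunk (Definition \ref{definition G hat sets}), and because $\mu$ is additive on disjoint unions; additivity is built into the Harman--Snowden definition, as the additive completion in Theorem \ref{theorem equivalence of measures} shows. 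Independence from the choice of representatives holds by invariance of the integrand. Finally, $f_\mu(v)$ must lie in $\cC(Y')$. For $g\in U$, conjugation invariance and isomorphism invariance of $\mu$ give $f_\mu(v)(gy')=f_\mu(v)(y')$, since translation by $g$ is an isomorphism between the relevant orbits, intertwining $U(y')\cap U$ and $U(gy')\cap U$. So $f_\mu(v)$ is $U$-invariant.

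\textbf{Properties (ii)--(iv).} Property (ii) follows the same way: for $g\in G$, the vector $gv$ is $gUg^{-1}$-invariant, and transporting the orbit decomposition by $g$ uses conjugation invariance $\mu(Z^g)=\mu(Z)$. For (iii), take $v=v_{y_0}$ and $U=G_{y_0}$. The support is $\{y_0\}$, a single point with $\mu(\pt)=1$, so $f_\mu(v_{y_0})=f(v_{y_0})$. For (iv), the function $\iota$ has $a_{y',y}=\delta_{y',y}$, so the support for $y'$ is $\{y'\}$ and $f_\mu(v)(y')=\lambda_{y'}$.

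\textbf{Composition (v): the main obstacle.} Writing $g(v_{y'})=\sum b_{y'',y'}v_{y''}$, we get
\[
(g_\mu\circ f_\mu)(v)(y'')=\int_{Y'} b_{y'',y'}\int_Y a_{y',y}\lambda_y\,d\mu(y)\,d\mu(y').
\]
On the other side, $g_\mu\circ f$ has coefficients $c_{y'',y}=\int_{Y'}b_{y'',y'}a_{y',y}\,d\mu(y')$, so $(g_\mu\circ f)_\mu(v)(y'')$ is the iterated integral in the opposite order. The statement therefore reduces to a Fubini theorem for $\mu$ on the product $Y'\times Y$, with the integrand invariant under $W=U\cap U(y'')$. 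My plan is to prove that both iterated integrals equal the integral over $Y'\times Y$. Decompose $Y'\times Y$ into $W$-orbits $\cO$, and let $p\colon\cO\to p(\cO)$ be the projection onto $Y'$. The map $p$ is a morphism of transitive $W$-sets, so multiplicativity gives $\mu(\cO)=\mu(p^{-1}(y'))\,\mu(p(\cO))$. The fiber $p^{-1}(y')$ is an orbit of $W\cap G_{y'}$, and these fibers are exactly the pieces appearing in the inner integral over $Y$. Summing over orbits identifies the $Y'$-first iterated integral with $\int_{Y'\times Y}$, and the symmetric argument with the projection onto $Y$ handles the other order. The care required lies in matching the orbit decompositions of the inner integrals, which are taken with respect to the varying subgroups $W\cap G_{y'}$, with the fibers of $p$; well-definedness from (i) lets us use exactly these subgroups.
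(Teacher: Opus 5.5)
Your proposal is correct, but it takes a different route from the paper. The paper does not verify anything by hand. It identifies $f_\mu$ with the operator of a $G$-invariant $Y'\times Y$-matrix in \cite[\S7.2]{harmansnowden22}, and it transports the properties of matrix multiplication through the functor $\Phi_0$ of \cite[Proposition 10.13]{harmansnowden22}.

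You instead re-derive the needed special case directly from the axioms:
\begin{enumerate}
\item independence of the open subgroup, from additivity together with the fact that shrinking the group does not change a $\hat G$-set;
\item $G$-equivariance and membership in $\cC(Y')$, from conjugation and isomorphism invariance;
\item property (v), from a Fubini identity. You decompose $Y'\times Y$ into $W$-orbits, observe that the fiber of $\cO\to p(\cO)$ over $y'$ is a single $(W\cap G_{y'})$-orbit, and apply multiplicativity.
\end{enumerate}
This is essentially the mechanism behind associativity of matrix multiplication in \cite{harmansnowden22}, specialised to the case where the right-hand factor is a single Schwartz vector, so nothing is lost.

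The citation buys brevity and places $\mu$-lifts immediately as the morphisms of $\Perm$. Your argument buys a self-contained proof that shows where each axiom enters. In particular, it makes visible that additivity is already needed for (i). That axiom is part of \cite[Definition 3.1]{harmansnowden22} but is missing from the paper's restated definition of a measure. So it is better justified by pointing to that definition than by the additive completion in Theorem \ref{theorem equivalence of measures}, which only concerns regular measures in characteristic $0$.

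One small omission: linearity of $f_\mu$, part of (ii), is left implicit. It follows by computing two vectors with a common open subgroup and using the subgroup-independence you proved.
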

 \begin{proof}
 	The map $f_{\mu}$ is different notation for $A\colon \operatorname{Vec}_X\to \operatorname{Vec}_Y$ corresponding to a $G$-invariant $Y'\times Y$-matrix in \cite[\S7.2]{harmansnowden22}. The properties follow by translating properties of matrices through the forgetful functor $\Phi_0$ in \cite[Proposition 10.13]{harmansnowden22}, see \cite[\S8.2]{harmansnowden22}. The proof heavily relies on the axioms of a measure. 
 \end{proof}
 \begin{rema}
 	Property \ref{property v} in Proposition \ref{properties f mu} says that a composition of $\mu$-lifts $g_\mu\circ f_\mu\colon \cC(Y)\to \cC(Y'')$ is itself a $\mu$-lift. In particular it is determined by the values of the basis vectors $v_y\in kY$ where $y\in Y$.
 \end{rema}
 \begin{rema} \label{remark Perm category for Harman Snowden}
 	  There is a (in general not full) subcategory $\Perm$ of $\Rep(G)$, whose objects are Schwartz spaces $\cC(Y)$ of smooth and orbit-finite $G$-sets $Y$ and whose morphisms $\cC(Y)\to \cC(Y')$ are those which are $\mu$-lifts of morphisms $kY\to \cC(Y')$. This category is the main object of study in \cite{harmansnowden22}. The abbreviation stands for `category of permutation modules'.
 \end{rema}

%\begin{lemm}
%	The identity morphism $\id_{\cC(Y)}$ is continuous, since it is the closure of the inclusion $kY\hookrightarrow \cC(Y)$. The composition of continuous homomorphisms between Schwartz spaces is smooth. 
%\end{lemm}
	\section{Lie Algebra Actions for Oligomorphic Groups} \label{section Lie algebras}
	We fix a field $k$, a set $X$, an oligomorphic permutation group $G\subseteq \Sym(X)$, and a measure $\mu$ on $G$ with values in $k$.
	\subsection{An Infinite Tensor Power} \label{subsection infinite tensor power} \label{subsection: sl2 and Stanley}
We fix $r\in \N$. We start by recalling some notation for the general linear Lie algebra.
\begin{defi}
	We consider the $k$-algebra $(\M_{r \times r}(k), \cdot)$ of $r\times r$ matrices. For $1\leq i,j\leq  r$ we denote by $E_{ij}\in \M_{r \times r}(k)$ the \textit{elementary matrix}, whose only non-zero entry is the $i$-$j$-th entry, which is $1$. We denote by $[\blank, \blank]$ the \textit{Lie bracket} $[a,b]=ab-ba$ for $a,b\in \M_{r \times r}(k)$. The tuple $(\M_{r \times r}(k),[\blank, \blank])$ is the \textit{general linear Lie algebra} denoted $\mathfrak{gl}_r(k)$. In the special case $r=2$ we will consider the \textit{special linear Lie algebra} $\sl_2(k)\subseteq \gl_2(k)$ consisting of trace $0$ matrices. We set
	\[
		e\coloneqq E_{12}= \begin{psmallmatrix}
			0 & 1 \\ 0 & 0
		\end{psmallmatrix}, \quad f\coloneqq E_{21}=\begin{psmallmatrix}
		0 & 0 \\ 1 & 0
		\end{psmallmatrix}, \quad h\coloneqq E_{11}-E_{22} =\begin{psmallmatrix}
		1 & 0 \\ 0 & -1
		\end{psmallmatrix}.
	\] 
\end{defi}
We recall formulas for the Lie bracket of $\gl_r(k)$ for later reference.
\begin{lemm} \label{lemma defining relations of glr}
	Let $i,j,l,m\in \{1,\ldots, r\}$. We have 
	\begin{equation} \label{gl_r-relations}
	[E_{ij}, E_{lm}]=\begin{cases}
		E_{ii}-E_{jj}, & \text{if $i=m$, $j=l$,} \\
		E_{im}, & \text{if $i\neq m$, $j=l$,}\\
		-E_{lj}, & \text{if $i=m$, $j\neq l$,}\\
		0, &\text{if $i\neq m$, $j\neq m$.}
	\end{cases}
	\end{equation}
	In particular an action of $\gl_r(k)$ on a vector space $V$ consists of endomorphisms $E_{ij}\cdot \blank\in \End_k(V)$ for each $1\leq i,j\leq r$ such that their commutator $[E_{ij}\cdot \blank, E_{lm}\cdot \blank]$ satisfies the relations \ref{gl_r-relations}.
\end{lemm}
%\begin{proof}
%	This is a direct calculation using $E_{ij}E_{lm}=\delta_{j,l}E_{im}$. The second part is the definition of an action.
%\end{proof}
We fix a smooth and orbit finite $G$-set $Y$. Recall that $2_{\hat{G}}^{Y}\subseteq 2^Y$ denotes the set of $\hat{G}$-subsets of $Y$, cf.\ Definition \ref{definition G hat sets}.
\begin{defi} \label{definition par}
	We define the set of \textit{ordered partitions of $Y$} (into $r$ parts, which are $\hat{G}$-subsets) as
	\[
	\pa_r(Y)\coloneqq \{(Y_1,\cdots, Y_r) \mid  \bigcup_{i=1}^{r}Y_i=Y, \, Y_i\cap Y_j=\emptyset \text{ for $i\neq j$}\}\subseteq \left(2_{\hat{G}}^{Y}\right)^r.
	\]
\end{defi}
\begin{rema} \label{remark: identification with powerset r=2}
	In the case $r=2$ we will identify $\pa_2(Y)$ with $2_{\hat{G}}^{Y}$ via $(Y_1,Y_2)\mapsto Y_1$, since the complement of a $\hat{G}$-subset is a $\hat{G}$-subset by Lemma \ref{lemma elementary set-theoretic properties of hat G sets}. 
	%We will use this to simplify things in section Subsection \ref{subsection: sl2 and Stanley}.
\end{rema}
%\begin{beis}
%	Let $G=S_{\infty}$ and let $Y=\N$. Then $2_{\hat{G}}^{\N}$ consists precisely of the finite or cofinite subsets of $\N$ as in Example \ref{example finite or cofinite subsets}.
%\end{beis}
\begin{lemm}
	Let $Y$ be a smooth $G$-set. The componentwise $G$-action on $\pa_r(Y)$ is smooth.
\end{lemm}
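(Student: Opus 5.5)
We must show that for each $(Y_1,\ldots,Y_r)\in \pa_r(Y)$ the stabilizer $G_{(Y_1,\ldots,Y_r)}$ is open. Before that, we check that the componentwise action is well defined: for $g\in G$, the tuple $(gY_1,\ldots,gY_r)$ again lies in $\pa_r(Y)$. Both the union condition and pairwise disjointness are preserved by the bijection $g\colon Y\to Y$. Moreover, if $Y_i$ is stable under an open subgroup $U_i$, then $gY_i$ is stable under $gU_ig^{-1}$. This subgroup is open because conjugation by $g$ is a homeomorphism of the topological group $G$ (Proposition \ref{topological properties}). Hence each $gY_i$ is again a $\hat{G}$-subset.

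For smoothness, by Definition \ref{definition par} each $Y_i$ is a $\hat{G}$-subset, so there are open subgroups $U_1,\ldots,U_r\subseteq G$ with $U_iY_i\subseteq Y_i$. We set $U\coloneqq U_1\cap\cdots\cap U_r$. A finite intersection of open subgroups is an open subgroup, so $U$ is open. We have $UY_i\subseteq Y_i$ for every $i$. Since $U$ is a group, applying this to $u^{-1}$ also gives $Y_i\subseteq uY_i$, and hence $uY_i=Y_i$ for all $u\in U$. Therefore $U\subseteq G_{(Y_1,\ldots,Y_r)}$. A subgroup containing an open subgroup is open, being the union of cosets of that subgroup. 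So the stabilizer is open and the action is smooth.

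The only point needing care is the passage from ``$U\cdot Y_i\subseteq Y_i$'' to actual equality $uY_i=Y_i$, and this is settled by the inverse argument above. Alternatively, one can first show that $2_{\hat{G}}^{Y}$ itself is a smooth $G$-set by the same argument, and then view $\pa_r(Y)$ as a $G$-subset of the product $(2_{\hat{G}}^{Y})^r$. Lemma \ref{lemma products of smooth} handles products of two smooth $G$-sets, so by induction it applies to finite products, and any $G$-subset of a smooth set is smooth (Example \ref{example smooth orbit-finite sets}). Either way no step presents a genuine obstacle. Note that orbit-finiteness of $\pa_r(Y)$ is not claimed here, and indeed it need not hold.
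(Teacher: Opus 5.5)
Your proof is correct and is essentially the paper's argument. The paper shows that $2^{Y}_{\hat{G}}$ is a smooth $G$-subset of $2^Y$: $gZ$ is preserved by $gUg^{-1}$, and the set-stabilizer of $Z$ contains the open subgroup $U$. It then applies Lemma \ref{lemma products of smooth} to $(2^{Y}_{\hat{G}})^r$, which is exactly your alternative route, while your main route just unwinds that lemma by intersecting the $U_i$ directly.
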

\begin{proof}
	First note that $2_{\hat{G}}^{Y}\subseteq 2^Y$ is a $G$-subset of the power-set with the elementwise $G$-action. Indeed let $Z\subseteq Y$ is a $\hat{G}$-subset of $Y$, say $Z$ is a $U$-subset of $Y$ for some open subgroup $U\subseteq G$ and let $g\in G$. Then $gZ$ is preserved by the open subgroup $gUg^{-1}$. Hence $gZ$ is also a $\hat{G}$-subset of $Y$. The set-stabilizer of $Z$ contains $U$ by definition, and every subgroup of $G$ containing an open subgroup is open by Proposition \ref{topological properties}. Now $\left(2_{\hat{G}}^{Y}\right)^r$ is smooth by Lemma \ref{lemma products of smooth} and hence its $G$-subset $\pa_r(Y)$ is also smooth.
\end{proof}

\begin{defi} \label{definition tensor power}
	We define the \textit{$Y$-th tensor power of $k^r$} as the $G$-representation
	\[
	(k^r)^{\otimes Y}\coloneqq \cC(\pa_r(Y)).
	\]
\end{defi}
\begin{beis} \label{example: finite Y}
	If $Y$ is finite, 
	then $\pa_r(Y)$ is isomorphic as $G$-set to $\Maps(Y, \{1,\ldots, r\})$. Indeed, the $\hat{G}$-subsets are all subsets of $Y$ by Lemma \ref{lemma elementary set-theoretic properties of hat G sets}. The isomorphism identifies $(Y_1,\ldots, Y_r)$ with the map, which maps $Y_i$ to $i$. We can further identify $\Maps(Y, \{1,\ldots, r\})\cong \prod_{y\in Y}\{1,\ldots, r\}$. After linearizing products turn to tensor products by Lemma \ref{lemma linearizing product}, so that we have $k\pa_r(Y)\cong (k^r)^{\otimes |Y|}$. Note here also that $k\pa_r(Y)=\cC(\pa_r(Y))$, since $\pa_r(Y)$ is finite. In total we have
	\[
		(k^r)^{\otimes Y}=\cC(\pa_r(Y))=k\pa_r(Y)\cong k \prod_{y\in Y}\{1,\ldots, r\} \cong (k^r)^{\otimes |Y|}.
	\]
\end{beis}
\begin{defi} \label{defi: action glr}
	 Let $1\leq i,j\leq r$. We first define a $G$-equivariant linear map $E_{ij}\colon k\pa_r(Y)\to (k^r)^{\otimes Y}$ by linearly extending the assignment
	 \[
	 	E_{ij}\cdot v_{Y_1,\ldots, Y_r}=\sum_{y\in Y_j}v_{Y_1,\ldots, Y_i\cup\{y\}, \ldots Y_j\setminus\{y \}, \ldots, Y_r} \in \cC(\pa_r(Y))
	 \]
	 on standard basis vectors labeled by $(Y_1,\ldots, Y_r)\in \pa_r(Y)$.
	 We extend $E_{ij}\cdot \blank$ to an endomorphism $(E_{ij} \cdot \blank)_{\mu}\in \End_k((k^r)^{\otimes Y})$ using Definition \ref{definition f mu}. For simplicity we will write $E_{ij} \cdot \blank$ instead of $(E_{ij} \cdot \blank)_{\mu}$. By convention we interpret the operator for $i=j$ as rescaling
	 \[
	  E_{ii}\cdot v_{Y_1,\ldots, Y_r}\coloneqq \mu(Y_i)\cdot v_{Y_1,\ldots, Y_r}.
	  \]
\end{defi}
The following is the main theorem of this section.
\begin{theo} \label{theorem lie algebra action}
	The operators $E_{ij}\cdot \blank$ from Definition \ref{defi: action glr} are well-defined and give an action $\gl_r(k)\acts (k^r)^{\otimes Y}$,
	which commutes with the action of $G$. In particular we obtain a natural action on invariants
	\[
		\gl_r(k)\acts \left((k^r)^{\otimes Y}\right)^G.
	\]
\end{theo}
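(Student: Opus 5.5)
Throughout, $E_{ij}\cdot\blank$ with $i\neq j$ means the $\mu$-lift of Definition \ref{definition f mu}, and $E_{ii}$ is the rescaling by $\mu(Y_i)$.

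\emph{Well-definedness.} First I check that the unlifted map $E_{ij}\colon k\pa_r(Y)\to\cC(\pa_r(Y))$ lands in the Schwartz space and is $G$-equivariant. If $(Y_1,\ldots,Y_r)$ is stabilized by an open $U$, then the sum $\sum_{y\in Y_j}v_{\ldots,Y_i\cup\{y\},\ldots,Y_j\setminus\{y\},\ldots}$ is $U$-invariant, because $U$ permutes $Y_j$. Its summands are again elements of $\pa_r(Y)$ by Lemma \ref{lemma elementary set-theoretic properties of hat G sets}. Equivariance is clear, since $g$ sends $y\in Y_j$ to $gy\in gY_j$. Proposition \ref{properties f mu} then gives a well-defined $G$-equivariant lift. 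For $E_{ii}$, $\mu(Y_i)$ makes sense because $Y_i$ is a $\hat G$-set. Conjugation invariance gives $\mu(gY_i)=\mu(Y_i)$, so $E_{ii}$ is equivariant on $k\pa_r(Y)$. It is also the $\mu$-lift of itself, since it is diagonal with coefficients constant on $G$-orbits. In particular, all operators commute with $G$, so they preserve $G$-invariants.

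\emph{Reduction of the relations to basis vectors.} By Proposition \ref{properties f mu}\eqref{property v}, a composite $(E_{ij})_\mu\circ(E_{lm})_\mu$ equals the $\mu$-lift of $(E_{ij})_\mu\circ E_{lm}$. A linear combination of $\mu$-lifts is the $\mu$-lift of the combination; this is immediate from Definition \ref{definition f mu}, or one can refine the open subgroups and use additivity of $\mu$. Hence the commutator $[E_{ij},E_{lm}]$ is the $\mu$-lift of its restriction to $k\pa_r(Y)$, and the right-hand sides in \eqref{gl_r-relations} are $\mu$-lifts too. So it suffices, by Lemma \ref{lemma defining relations of glr}, to verify the relations on standard basis vectors $v=v_{Y_1,\ldots,Y_r}$.

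\emph{Verification, case by case.} Fix $v=v_{Y_1,\ldots,Y_r}$, stabilized by an open $U$.

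\begin{itemize}
\item For $\{i,j\}\cap\{l,m\}=\emptyset$, both orders move distinct points between disjoint blocks. Both composites give the same sum over $Y_j\times Y_m$, with measure factor $1$, since each fibre of the second application to a fixed target is a single point.
\item For $j=l$, $i\neq m$, the composite $E_{ij}E_{jm}v$ is a sum over pairs $(y,y')$ with $y'\in Y_m$ moved to $Y_j$, and $y$ then moved from $Y_j\cup\{y'\}$ to $Y_i$. The terms with $y=y'$ give exactly $E_{im}v$. The terms with $y\neq y'$ match the terms of $E_{jm}E_{ij}v$. The mixed cases ($i=m$, $j\neq l$) are handled the same way.
\end{itemize}

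The genuinely measure-theoretic case is $[E_{ij},E_{ji}]$ with $i\neq j$. Here $E_{ij}E_{ji}v$ first moves some $y\in Y_i$ into $Y_j$ and then moves some $y'$ back. The diagonal terms $y'=y$ all return $v$. Computing the $\mu$-lift, the coefficient of $v_{Y_1,\ldots,Y_r}$ is the measure of the $U$-set $\{y\in Y_i\}$, namely $\mu(Y_i)$. The off-diagonal terms cancel against the corresponding terms of $E_{ji}E_{ij}v$, leaving $(\mu(Y_i)-\mu(Y_j))v=(E_{ii}-E_{jj})v$.

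\emph{Main obstacle.} I expect the main difficulty to be the bookkeeping in the $\mu$-lift: one has to check that the off-diagonal contributions match including their measure coefficients. For a target vector $v_{Y'}$ of a composite, the coefficient is the $\mu$-measure of the $U(Y')\cap U$-set of intermediate configurations. I would verify that this set is a single point for off-diagonal targets, so the coefficient is $1$ in both orders. For the diagonal target it is a copy of $Y_i$ (respectively $Y_j$), which produces $\mu(Y_i)$ by isomorphism invariance of $\mu$.

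Finally, the relations involving $E_{ii}$ with $E_{lm}$ follow from additivity of $\mu$. For example, $\mu(Y_i\cup\{y\})=\mu(Y_i)+1$ and $\mu(Y_i\setminus\{y\})=\mu(Y_i)-1$, which is exactly the weight shift required.
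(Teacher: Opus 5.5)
Your proposal is correct and follows essentially the same route as the paper. Well-definedness comes from the $U$-invariance of the moved-point sums, $G$-equivariance and the reduction of the $\gl_r(k)$-relations to standard basis vectors come from the properties of $\mu$-lifts in Proposition \ref{properties f mu}, and the key relation $[E_{ij},E_{ji}]=E_{ii}-E_{jj}$ follows by computing the measure of the set of intermediate configurations (a copy of $Y_i$ for the diagonal target, a single point otherwise). You also write out more of the case analysis than the paper, which only treats $[E_{ij},E_{ji}]$ after reducing to $r=2$; note that the vanishing commutators with $i=l$ or $j=m$ (e.g.\ $[E_{12},E_{13}]$, $[E_{13},E_{23}]$) fall outside your ``disjoint blocks'' bullet, although the same single-point-fibre argument handles them.
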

\begin{proof}
	The ``summands'', which the operator $E_{ij}\cdot \blank$ produces are indeed labeled by elements in $\pa_r(Y)$, since $\hat{G}$-sets are compatible with adding or removing one element by Lemma \ref{lemma elementary set-theoretic properties of hat G sets}. For $Y_1,\ldots, Y_r\in \pa_r(Y)$ there exist open subgroups $U_1,\ldots, U_r$ which act on $Y_1,\ldots, Y_r$ respectively. Hence $U\coloneqq \bigcap_{i=1}^rU_i$ acts on each $Y_j$ with $1\leq j\leq r$. The expression $\sum_{y\in Y_j}v_{Y_1,\ldots, Y_i\cup\{y\}, \ldots Y_j\setminus\{y \}, \ldots, Y_r}$ is $U$-invariant and hence contained in the Schwartz space $\cC(\pa_r(Y))$.
	The operators $E_{ij}$ commute with the action of $G$ by Proposition \ref{properties f mu}, part \eqref{property ii}. For basis vectors the $G$-equivariance holds since it does not matter whether we first act with $G$ on a partition of $Y$ and then move one element or do it the other way around. 
	
	One has to check the $\gl_r(k)$-relations from \ref{lemma defining relations of glr} hold, which can be checked on basis vectors using Proposition \ref{properties f mu}, part \eqref{property v}, by a case by case distinction. We show one of the relations, namely $[E_{ij},E_{ji}]=E_{ii}-E_{jj}$ for $i\neq j$. Since all the calculations are local we can simplify notation and assume that $r=2$, $i=1$, $j=2$. This does not change the calculation, just the number of indices we would have to write. Let $(Y_1,Y_2)\in \pa_2(Y)$, we compute the action of $e=E_{12}$ and $f=E_{21}$:
	\[
		(ef)(v_{Y_1,Y_2})=e\left(\sum_{y_1\in Y_1}v_{Y_1\setminus\{y_1\},Y_2\cup\{y_1\}}\right)=\sum_{(Y_1', Y_2')\in \pa_2(Y)} \mu(S_{Y_1,Y_1'})v_{Y_1', Y_2'}
	\]
	where for fixed $(Y_1', Y_2')\in \pa_2(Y)$ we set
	\[
		S_{Y_1,Y_1'}\coloneqq\left\{y_1\in Y_1 \mid \text{There exists } \tilde{y}_2\in Y_2\cup\{y_1\}\colon Y_1'=Y_1\setminus\{y_1\}\cup \{\tilde{y}_2\}\right\}.
	\]
	This set can be reformulated as the set
	\[
		S_{Y_1',Y_2'}=\{y_1\in Y_1 \mid Y_1\setminus \{y_1\}\subseteq Y_1'\}.
	\]
	There are three cases how large this $\hat{G}$-set is. If $Y_1=Y_1'$, then $S_{Y_1, Y_1'}=Y_1=Y_1'$ and $\mu(S_{Y_1, Y_1'})=\mu(Y_1)$.  If $\emptyset\neq Y_1\cap Y_1' \neq Y_1$ then $|S_{Y_1, Y_1'}|=1$ and in particular $\mu(S_{Y_1, Y_1'})=1$. Otherwise $Y_1$ and $Y_1'$ differ in at least two elements, and this set is empty. By the same calculation
	\[
	(fe)(v_{Y_1,Y_2})=e\left(\sum_{y_2\in Y_2}v_{Y_1\cup\{y_2\},Y_2\setminus\{y_2\}}\right)=\sum_{(Y_1', Y_2')\in \pa_2(Y)} \mu(T_{Y_1,Y_1'})v_{Y_1', Y_2'}
	\]
	where for fixed $(Y_1', Y_2')\in \pa_2(Y)$ we set
	\[
	T_{Y_1,Y_1'}\coloneqq\left\{y_2\in Y_2 \mid \text{There exists } \tilde{y}_1\in Y_1\cup\{y_2\}\colon Y_2'=Y_2\setminus\{y_2\}\cup \{\tilde{y}_1\}\right\}.
	\]
	Again the measure of this set is either $\mu(Y_2)$, if $Y_2=Y_2'$, it is $1$ if $Y_1$ and $Y_1'$ differ exactly in one point, and $0$ otherwise. 
	In total we see that
	\[
		(ef-fe)(v_{Y_1,Y_2})=(\mu(Y_1)-\mu(Y_2))v_{Y_1,Y_2}=(E_{11}-E_{22})\cdot v_{Y_1,Y_2},
	\] 
	since the remaining summands, whose coefficient is $1$ cancel.
	%We include the proof of the $\sl_2(k)$-relations in the proof of \Cref{proposition for homogenous graphs} below.
\end{proof}
\begin{rema}[Symmetry] \label{rema: symmetry}
	The set $\pa_r(Y)$ comes with a natural action of the finite symmetric group $S_r$ by permuting components. This action induces an action $S_r\acts (k^r)^{\otimes Y}$, which commutes with the action of $G$. In particular it also acts on $\left((k^r)^{\otimes Y}\right)^G$. This $S_r$-action is compatible with the $\gl_r(k)$-action in the sense that for $\sigma\in S_r$ one has
	\[
		(\sigma\cdot \blank) \circ (E_{ij} \cdot \blank) \circ (\sigma^{-1}\cdot \blank)=E_{\sigma(i)\sigma(j)} \cdot \blank
	\] for $1\leq i, j \leq r$. 
	In the special case $r=2$ and the Lie algebra $\sl_2(\C)$ conjugating with the transposition $1\mapsto 2, 2\mapsto 1$ exchanges $e=E_{12}$ and $f=E_{21}$, and replaces $h=E_{11}-E_{22}$ by $-h$. 
	For finite permutation groups $G\subseteq \Sym(X)$ the $S_r$-action gives symmetry on analogues of multinomial coefficients. 
\end{rema}
The next remark explains why one would expect a Lie algebra action and no commuting group action.
\begin{warn}[No general linear group]
	Since the symmetric group $S_r$ can be viewed as permutation matrices living in $\GL_r(k)$ one might ask if the $S_r$-action from Remark \ref{rema: symmetry} can extended to an action of $\GL_r(k)$ on $(k^r)^{\otimes Y}$.
	In the setting of finite groups $G$ acting on finite sets $Y$, there is indeed this natural action $\operatorname{GL}_r(k)$ on $(k^r)^{\otimes |Y|}\cong k\Maps(Y,\{1,\ldots, r\})$,
	cf.\ Example \ref{example: finite Y}.
	 Here $A\in \GL_r(k)$ acts on any pure tensor $v_1\otimes v_2 \otimes \cdots \otimes v_{|Y|}$ via
		\[
			A\cdot (v_1\otimes v_2 \cdots \otimes v_{|Y|})=Av_1\otimes Av_2\otimes \cdots \otimes Av_{|Y|}.
		\]
	An analogue of this action does not exist for infinite sets $Y$.
	Up to integrating (and subtleties, which we sweep under the rug) finite-dimensional representations of $\GL_r(k)$ defined in terms of polynomials correspond bijectively to representations of $\gl_r(k)$. However there are many infinite-dimensional representations of $\gl_r(k)$ with no corresponding $\GL_r(k)$-action. Polynomial representations of $\GL_r(k)$ are comodules over the coordinate ring $k[\GL_r]$, and hence unions of their finite-dimensional subrepresentations. This does not hold for the $\gl_r(k)$-representations like Verma modules, which we construct.
\end{warn}
\begin{defi}
	We define the $\hat{G}$-subset $\operatorname{Part}_r^{\operatorname{fin}}(Y)\subseteq \pa_r(Y)$ of all those set partitions $(Y_1,Y_2,\ldots, Y_{r-1},Y_r)$ such that $Y_1,\ldots, Y_{r-1}$ are finite. We define $(k^r)_{\operatorname{fin}}^{\otimes X}\coloneqq \cC(\operatorname{Part}_r^{\operatorname{fin}}(Y))\subseteq (k^r)^{\otimes Y}$.
\end{defi}

\begin{beis}
	Consider the case $r=2$. If we use Remark \ref{remark: identification with powerset r=2} and identify $\pa_2(Y)$ with $2_{\hat{G}}^Y$, then $\operatorname{Part}_2^{\operatorname{fin}}(Y)$ gets identified with $2_{\operatorname{fin}}^Y$, the set of finite subsets of $Y$. In this case $2_{\operatorname{fin}}^Y=\bigsqcup_{n\in \mN} \binom{Y}{n}$ as $G$-sets. 
\end{beis}
\begin{lemm} \label{lemma action on finite stuff}
	The actions of $G$ and $\gl_r(k)$ on $(k^r)^{\otimes Y}$ restrict to $(k^r)_{\operatorname{fin}}^{\otimes X}$. In particular we obtain an action 
	\[
		\gl_r(k)\acts \left((k^r)_{\operatorname{fin}}^{\otimes Y}\right)^G.
	\]
\end{lemm}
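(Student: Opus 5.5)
The plan is to show that $\cC(\operatorname{Part}_r^{\operatorname{fin}}(Y))$ is stable under both actions, viewed inside $\cC(\pa_r(Y))$ as the functions supported on $\operatorname{Part}_r^{\operatorname{fin}}(Y)$. First I check that $\operatorname{Part}_r^{\operatorname{fin}}(Y)$ is a $G$-subset of $\pa_r(Y)$. The group $G$ acts by bijections of $Y$, so $gY_i$ is finite whenever $Y_i$ is. Hence $G$ preserves $\operatorname{Part}_r^{\operatorname{fin}}(Y)$. A function $v\in\cC(\pa_r(Y))$ that vanishes outside this subset is sent by $g$ to a function that again vanishes outside it. So the $G$-action restricts, and $\operatorname{Part}_r^{\operatorname{fin}}(Y)$ is a smooth $G$-set, being a $G$-subset of the smooth set $\pa_r(Y)$.

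Next come the diagonal operators $E_{ii}$. They act on basis vectors by the scalar $\mu(Y_i)$. Through the $\mu$-lift, or simply pointwise, they send a function $v$ to $(Y_\bullet)\mapsto \mu(Y_i)v(Y_\bullet)$. This preserves the support, so $E_{ii}$ restricts.

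For $i\neq j$ I use the explicit formula for the $\mu$-lift in Definition \ref{definition f mu}. Take $v\in\cC(\pa_r(Y))$ supported on $\operatorname{Part}_r^{\operatorname{fin}}(Y)$ and an output label $y'=(Y'_1,\ldots,Y'_r)$. The coefficient of $v_{y'}$ in $E_{ij}\cdot v$ is a finite sum over orbits of $\operatorname{Supp}_{f,y',v}$. This set consists of labels $(Y_1,\ldots,Y_r)$ with $v(Y_\bullet)\neq 0$ that are sent to $y'$ by moving one element from $Y_j$ to $Y_i$. Each such $Y_\bullet$ differs from $y'$ in exactly one point. So if $Y_1,\ldots,Y_{r-1}$ are finite, then $Y'_1,\ldots,Y'_{r-1}$ are finite, since each $Y'_l$ is either $Y_l$, or $Y_l$ with one point added or removed. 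Contrapositively, if $y'\notin\operatorname{Part}_r^{\operatorname{fin}}(Y)$ then $\operatorname{Supp}_{f,y',v}=\emptyset$, and the coefficient of $v_{y'}$ vanishes. Hence $E_{ij}\cdot v$ is again supported on $\operatorname{Part}_r^{\operatorname{fin}}(Y)$.

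The $\gl_r(k)$-relations and the commutation with $G$ are inherited from Theorem \ref{theorem lie algebra action}, since we are restricting to an invariant subspace. Taking $G$-invariants of an invariant subspace on which the two actions commute then gives the action $\gl_r(k)\acts\left((k^r)^{\otimes Y}_{\operatorname{fin}}\right)^G$.

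The only delicate point is the identification of $\cC(\operatorname{Part}_r^{\operatorname{fin}}(Y))$ with the subspace of $\cC(\pa_r(Y))$ of functions supported there. Here I extend by zero. The extension is still invariant under the same open subgroup because the subset is $G$-stable, so the identification works. I must also make sure the support argument is applied to the $\mu$-lift itself and not just to basis vectors. That is exactly why I argue via $\operatorname{Supp}_{f,y',v}$ rather than only on basis vectors.
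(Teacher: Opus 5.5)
Your proof is correct and follows the same idea as the paper's one-line proof: each $E_{ij}$ either rescales or moves a single element per summand, so the first $r-1$ parts stay finite. You make this rigorous at the level of the $\mu$-lift, via $\operatorname{Supp}_{f,y',v}=\emptyset$ for labels outside $\operatorname{Part}_r^{\operatorname{fin}}(Y)$, and you add the routine checks of $G$-stability, the extension-by-zero identification, and the passage to $G$-invariants.
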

\begin{proof}
	For each $1\leq i,j\leq r$ the action $E_{ij}\cdot \blank$ either rescales or moves at most one element at a time per summand. 
\end{proof}
\begin{rema}
	In the case $k=\C$, $r=2$ Lemma \ref{lemma action on finite stuff} gives an action $\gl_2(\C)\acts \H_{G,Y}^{\star}$, since $\H_{G,Y}^{n}=\cC\left(\binom{Y}{n}\right)^G=\Maps\left(\binom{Y}{n}, \C\right)^G$. We will usually just consider the action of $\sl_2(\C)\subseteq \gl_2(\C)$.
\end{rema}
The following example appeared already in work of Entova-Aizenbud \cite{aizenbud2015} in the context of Deligne's interpolation category $\operatorname{\underline{Rep}}(S_t)$ (cf.\ \cite{deligne07}). This category itself heavily inspired \cite{harmansnowden22}.
\begin{beis} \label{example sl2-vermas}
	We go back to the setting of Example \ref{example: polynomial ring generating function}. There is a unique $S_{\infty}$-orbit on $\binom{\N}{n}$ for each $n\in \mN$, which gives the sequence $1,1,1,1,\ldots$. Let $k=\C$ and let $\lambda\in \C$. The space $(\C^{2})_{\operatorname{fin}}^{\otimes \N}$ has basis
	\[
	v_{[n]}\coloneqq \sum_{S=\{s_1,\ldots, s_n\}\in \binom{\N}{n}}v_{\{s_1,\ldots, s_n\}}, \quad  \text{ where } n\in \mN.
	\]
	Here $[n]$ stands for the isomorphism class of an $n$-element set. The action of $e=\begin{psmallmatrix}
		0 & 1 \\ 0 & 0
	\end{psmallmatrix}$ takes $v_{[n]}$ to $(n+1)\cdot v_{[n+1]}$, since there are $(n+1)$ embeddings of $[n]$ into $[n+1]$. Next we consider the action of $f=\begin{psmallmatrix}
		0 & 0 \\ 1 & 0
	\end{psmallmatrix}$. Fix $S\in \binom{\N}{n}$ and $S'\in \binom{\N}{n+1}$. The vector $v_{S'}$ is mapped under $f\cdot \blank $ onto a sum which contains $v_S$ as a summand if and only if $S'$ is of the form $S'=S\sqcup \{s\}$ for some $s\in \N\setminus S$. The measure $\mu_{\lambda}$ assigns to $\N\setminus S$ the value $\lambda-n$. Since this argument holds for any $S$ and $S'$ as above we have $f\cdot v_{[n+1]}=(\lambda-n)\cdot v_{[n]}$. Finally
	\[
	h\cdot v_{[n]}=(E_{11}-E_{22})v_{[n]}=\mu(\{1,\ldots, n\})-\mu(\N\setminus \{1,\ldots, n\})=(2n-\lambda)v_{[n]}.
	\]
	To summarize we obtain the following picture
	\begin{equation} \label{Verma module picture}
		\begin{tikzcd}
			{v_{[0]}} \arrow[r, red!60!black, "1", bend left] \arrow["-\lambda"', teal!60!black, loop, distance=2em, in=305, out=235] & {v_{[1]}} \arrow[l, blue!60!black,  "\lambda", bend left] \arrow[r, red!60!black, "2", bend left] \arrow["-\lambda+2"', teal!60!black, loop, distance=2em, in=305, out=235] & {v_{[2]}} \arrow[l, blue!60!black, "\lambda-1", bend left] \arrow[r, red!60!black, "3", bend left] \arrow["-\lambda+4"', teal!60!black, loop, distance=2em, in=305, out=235] & {v_{[3]}}  \arrow[l, blue!60!black,  "\lambda-2", bend left] \arrow[r, red!60!black, "4", bend left] \arrow["-\lambda+6"', teal!60!black, loop, distance=2em, in=305, out=235] & \cdots \arrow[l, blue!60!black, "\lambda-3", bend left]
		\end{tikzcd}
	\end{equation}
	The arrows pointing \textcolor{red!60!black}{right} depict the action of $\textcolor{red!60!black}{e}$, the ones \textcolor{blue!60!black}{left} the action of $\textcolor{blue!60!black}{f}$ (note that $v_{[0]}$ is annihilated by $f$) and the \textcolor{teal!60!black}{loops} the action of $\textcolor{teal!60!black}{h}$. This is precisely a lowest weight Verma module $M^{-}(-\lambda)$ for $\sl_2(\C)$, see e.g.\ \cite[\S3]{mazorchuk10}. 
\end{beis}
\begin{rema}
	In Example \ref{example sl2-vermas} there is also a subspace $(\C^{2})_{\operatorname{cofin}}^{\otimes \N}\subseteq (\C^{2})^{\otimes \N}$. Moreover $(\C^{2})^{\otimes \N}=(\C^{2})_{\operatorname{fin}}^{\otimes \N}\oplus (\C^{2})_{\operatorname{cofin}}^{\otimes \N}$.
	One can check that $\left((\C^{2})_{\operatorname{cofin}}^{\otimes \N}\right)^{S_{\infty}}$ is the usual highest weight Verma module $M(\lambda)$ of $\sl_2(\C)$. The two subspaces $\left((\C^{2})_{\operatorname{fin}}^{\otimes \N}\right)^{S_{\infty}}=M^{-}(-\lambda)$ and $\left((\C^{2})_{\operatorname{cofin}}^{\otimes \N}\right)^{S_{\infty}}=M(\lambda)$ are exchanged by the action of $S_2$ from Remark \ref{rema: symmetry}. The actions of $\sl_2(\C)$ are related via the Chevalley involution $e\mapsto f$, $f\mapsto e$, $h\mapsto -h$.
\end{rema}
The following remark explains the connection to Deligne's interpolation category $\underline{\operatorname{Rep}}(S_t)$ from \cite{deligne07}.
\begin{rema}[{\cite[Proposition 6.3.2]{aizenbud2015}}]
	In Example \ref{example sl2-vermas} the lowest weight Verma module $M^{-}(-\lambda)$ is not irreducible if and only if $\lambda=n\in \mN$, which is precisely the case if the measure $\mu_{\lambda}$ is not regular. In the interpolation category setting (write $t=\lambda$), this is precisely the case when there is a functor $\Phi_n\colon \operatorname{\underline{Rep}}(S_{t=n})\to \operatorname{Rep}_{\C}(S_n)$. Technically as pointed out in \cite{aizenbud2015} $\H_{S_{\infty},\N}=(\C^2)_{\operatorname{fin}}^{\otimes \N}$ is not contained in $\operatorname{\underline{Rep}}(S_t)$, but in an (ind-)completion, since one considers the infinite direct sum of objects for $\binom{\N}{n}$.  The completion of the functor $\Phi_n$ sends $(\C^2)_{\operatorname{fin}}^{\otimes \N}$ to $(\C^{2})^{\otimes n}$ and $M^{-}(-n)=\left( (\C^2)_{\operatorname{fin}}^{\otimes \N}\right)^{S_\infty}$ to the quotient $\left((\C^2)^{\otimes n}\right)^{S_n}$, which is precisely the $(n+1)$-dimensional irreducible representation of $\sl_2(\C)$.
\end{rema}
\begin{theo} \label{theorem: Stanley's argument}
	Let $G\subseteq \Sym(X)$ be an oligomorphic group with measure $\mu$ taking values in $\C$. Let $Y$ be an infinite, smooth, and orbit-finite $G$-set.
	The orbit algebra $\H_{G,Y}^{*}$
	has a ascending filtration by $\sl_2(\C)$-subrepresentations 
	\[
		0=\cF_0\subseteq \cF_1 \subseteq \cF_2 \subseteq \cdots \subseteq \cF_n \subseteq \cdots \subseteq \H_{G,Y}^{*}, \quad \text{ with } \quad\bigcup_{n}\cF_n=\H_{G,Y}^{*},
	\]
	such that each quotient $\cF_{n+1}/\cF_{n}$ is isomorphic to a lowest weight Verma module $M^{-}(\lambda_n)$ for some $\lambda_n\in k$.
	In particular the integer sequence  $\left(\left| G \backslash \binom{Y}{n} \right|\right)_{n\in \mN}$ decomposes into a sum of constant $1,1,1,1,\ldots$-sequences. 
\end{theo}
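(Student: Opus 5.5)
The plan is to use the $\C[e]$-module structure of $\H_{G,Y}^{\star}$, with $e=E_{12}$ the raising operator. The first step is to identify the three operators on $\H_{G,Y}^{\star}=\left((\C^2)^{\otimes Y}_{\operatorname{fin}}\right)^G$, using Lemma \ref{lemma action on finite stuff}. Let $v\in \H^n_{G,Y}$ and $S'\in\binom{Y}{n+1}$. Since each $S'$ arises from only finitely many $(S,y)$, the $\mu$-lift of $e$ evaluated at $S'$ is the finite sum $\sum_{y\in S'}v(S'\setminus\{y\})$. So $e$ is exactly Cameron's operator of multiplication by $v_{[1]}\in \H^1_{G,Y}$ (Definition \ref{defi orbit algebra}). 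Next, $h=E_{11}-E_{22}$ acts on $v_S$ with $|S|=n$ by $\mu(S)-\mu(Y\setminus S)=n-(\mu(Y)-n)$. Hence $\H^n_{G,Y}$ is the weight space of weight $2n-\mu(Y)$. Finally, $f$ maps $\H^{n}_{G,Y}$ to $\H^{n-1}_{G,Y}$. I would then invoke Cameron's theorem \cite[\S5]{cameron90}: for infinite $Y$, multiplication by $v_{[1]}$ is injective. This is where the hypothesis that $Y$ is infinite enters, and it is the key nontrivial input. If one wants the argument self-contained, reproving this injectivity is the main obstacle; I would cite it.

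Next I would show that $\H_{G,Y}^{\star}$ is a free graded $\C[e]$-module. For each $n$, choose a complement $W_n$ of $e\H^{n-1}_{G,Y}$ in $\H^n_{G,Y}$; these are finite-dimensional because $G$ is oligomorphic and $Y$ is orbit-finite. Take a homogeneous basis of $\bigoplus_n W_n$ and order it by degree as $w_1,w_2,\ldots$, with $\deg w_1\le \deg w_2\le\cdots$. This is possible since each degree contributes finitely many vectors.
\begin{itemize}
\item Spanning: by induction on degree (graded Nakayama), $\H^{\le d}_{G,Y}$ lies in the $\C[e]$-span of the $w_i$ of degree $\le d$.
\item Linear independence: suppose a homogeneous relation $\sum_i c_i e^{a_i}w_i=0$ holds with not all $c_i$ zero, and let $a=\min a_i$ over the nonzero terms. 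Cancel $e^{a}$ using injectivity of $e$. The terms with $a_i=a$ then give an element of $W_d\cap e\H^{d-1}_{G,Y}=0$, where $d$ is their common degree. This forces those $c_i$ to vanish, a contradiction.
\end{itemize}

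Now define $\cF_m\coloneqq\bigoplus_{i\le m}\C[e]w_i$. This is an ascending, exhaustive filtration (it stabilises if there are only finitely many $w_i$), and each $\cF_{m}$ is $e$- and $h$-stable. For $f$-stability, first note that $f w_{m+1}$ has degree $\deg w_{m+1}-1$. By the ordering and the spanning statement, all of $\H^{<\deg w_{m+1}}_{G,Y}$ lies in $\cF_m$, so $f w_{m+1}\in\cF_m$. Then the relation $[e,f]=h$ gives, by induction on $j$,
\[
fe^jw=e^jfw-\textstyle\sum_{t=0}^{j-1}e^{j-1-t}he^{t}w .
\]
Since $h$ acts on each $e^{t}w$ by a scalar, this lies in $\cF_m+\C[e]w$ for $w=w_{m+1}$. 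Hence each $\cF_{m+1}$ is an $\sl_2(\C)$-subrepresentation.

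Finally, $\cF_{m+1}/\cF_m$ is free of rank one over $\C[e]$ on the class $\bar w$ of $w_{m+1}$, with $f\bar w=0$ and $h\bar w=(2\deg w_{m+1}-\mu(Y))\bar w$. A module generated by a lowest weight vector on which $e$ acts freely is exactly the lowest weight Verma module. Hence $\cF_{m+1}/\cF_m\cong M^{-}(2\deg w_{m+1}-\mu(Y))$. Counting dimensions degree by degree gives $\dim\H^n_{G,Y}=\#\{i:\deg w_i\le n\}$, so the orbit sequence is a sum of shifted constant sequences $1,1,1,\ldots$.
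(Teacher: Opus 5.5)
Your proof is correct. It rests on the same two inputs as the paper's proof:
\begin{itemize}
\item Cameron's injectivity of $e$, identified with multiplication by the constant function on $\binom{Y}{1}$;
\item the fact that a vector of minimal degree in what remains generates a lowest weight Verma module, because $e$ acts freely.
\end{itemize}

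The organization differs. The paper builds $\cF_{n+1}$ inductively. It picks $v\in\ker f$ of minimal degree in $\H^{\star}_{G,Y}/\cF_n$, asserts that $e$ remains injective on this quotient, and lifts the Verma module generated by $v$. You instead first prove that $\H^{\star}_{G,Y}$ is a free graded $\C[e]$-module on a degree-ordered homogeneous basis of complements $W_d$ of $e\H^{d-1}_{G,Y}$. You then take partial sums and check $f$-stability by hand, using degree reasons and the commutator identity.

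What your route buys is that injectivity of $e$ on each quotient is automatic, since $\H^{\star}_{G,Y}/\cF_m\cong\bigoplus_{i>m}\C[e]w_i$ as $\C[e]$-modules. In the paper this step is justified by the cited ``general fact about vector spaces''. That fact only gives injectivity of the induced map $\H^{\star}_{G,Y}/\cF_n\to\H^{\star}_{G,Y}/e(\cF_n)$, not of $e$ acting on $\H^{\star}_{G,Y}/\cF_n$. One needs the additional, true but unstated, observation that $\cF_n$ is $e$-saturated, i.e.\ $e^{-1}(\cF_n)=\cF_n$. This holds because each new generator lies in the lowest nonzero degree of the current quotient.

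Your version also makes explicit two things that stay implicit in the paper:
\begin{itemize}
\item the lowest weights $2\deg w_i-\mu(Y)$;
\item the number $\dim\H^{d}_{G,Y}-\dim\H^{d-1}_{G,Y}$ of Verma subquotients starting in degree $d$, which is the monotonicity statement directly.
\end{itemize}
The paper's version is shorter and closer to the standard lowest-weight argument.
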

\begin{proof}
	The crucial step is that the map $e\cdot \blank$ on $(\C^{2})_{\operatorname{fin}}^{\otimes \N}$ is injective. For this we view $(\C^{2})_{\operatorname{fin}}^{\otimes \N}$ as subspace
	\[
	(\C^{2})_{\operatorname{fin}}^{\otimes \N}=\bigoplus_{n\in \mN} \cC\left(\binom{Y}{n}\right)\subseteq \bigoplus_{n\in \mN} \Maps\left(\binom{Y}{n}, k\right). 
	\]
	In \cite[\S5]{cameron90} Cameron turns $\bigoplus_{n\in \mN} \Maps\left(\binom{Y}{n}, k\right)$ into a graded algebra called the incidence algebra. Taking the product with the constant $1$-function on $Y=\binom{Y}{1}$ defines the linear map
	\begin{align*}
		\Maps\left(\binom{Y}{n}, k\right)&\rightarrow \Maps\left(\binom{Y}{n+1}, k\right), \quad v \mapsto [T \mapsto \sum_{S\subseteq T, \, |S|=n}\! \! \!v(S)].
	\end{align*}
	This map is injective provided $|Y|\geq 2n+1$, see \cite[\S5]{cameron90}. 
	By comparing definitions it restricts to our action $e\cdot \blank\colon \cC\left(\binom{Y}{n}\right)\to \cC\left(\binom{Y}{n+1}\right)$, which is hence injective, which also implies that $e\cdot \blank\colon H^{n}_{G,Y}\to H^{n+1}_{G,Y}$ is injective by taking invariants.
	
	Now we can construct a filtration on $\H_{G,Y}^{*}$ inductively. 
	Define $\cF_0\coloneqq 0$. Assume $\cF_0, \ldots, \cF_n$ were constructed for some $n\in \mN$. Consider the quotient $\H_{G,Y}^{*}/ \cF_n$. First note that it decomposes into eigenspaces for $h\cdot \blank $ given by $\H_{G,Y}^{m}/ (\H_{G,Y}^{m} \cap \cF_n)$ with eigenvalue $-\mu(Y)+2n$. Since the action of $f\in \sl_2(\C)$ on $\H_{G,Y}^{*}$ is nilpotent, it is nilpotent when considered as endomorphism of the quotient. Choose $0\neq v\in \H_{G,Y}^{m}/ (\H_{G,Y}^{m}\cap \cF_n)$ such that $f\cdot v=0$ with $m\in \mN$ minimal. The action $e\cdot \blank$ on the quotient $\H_{G,Y}^{*}/ \cF_n$ is injective. Indeed we saw that $e\cdot \blank\colon \H_{G,Y}^{*}\to \H_{G,Y}^{*}$. Moreover is a general fact about vector spaces, that given any monomorphism $\varphi\colon V\hookrightarrow W$ between vector spaces $V$ and $W$ and any subspace $U\subseteq V$ the induced map $\bar{\varphi}\colon V/U\to W/f(U)$ remains injective. Hence $v$ generates a submodule $\bar{\cF_{n+1}}\subseteq \H_{G,Y}^{*}/ \cF_n$ isomorphic to a lowest weight Verma module $M^{-}(\lambda_n)$ for some $\lambda_n\in k$. Now lift $\bar{\cF}_{n+1}$ to a submodule $\cF_{n+1}\subseteq \H_{G,Y}^{*}$ containing $\cF_{n}$. Since each $h$-eigenspace $H^{n}_{G,Y}$ is finite dimensional (its dimension is $\left|G\backslash\binom{Y}{n}\right|$, which is finite since $Y$ is smooth and orbit-finite), every element in $\H_{G,Y}^{*}$ is contained in some $\cF_{m}$ for $m>>0$.   
	%Consider the quotient $\H_{G,Y}^{*}/\cF_n$. $\cF_1\subseteq \H_{G,Y}^{*}$ to be the $\sl_2(\C)$-subrepresentation generated by $v_{\emptyset}$. Since $e\cdot \blank$ is injective, $f\cdot v_{\emptyset}=0$, and $h\cdot v_{\emptyset}=-\mu(Y)v_{\emptyset}$ this representation is isomorphic to the lowest weight Verma module $M^{-}(-\mu(Y))$. Consider the quotient $\H_{G,Y}^{*}/\mathcal{F_1}$. The induced action $e\cdot \blank$ on this quotient remains injective. Indeed this is a general fact about vector spaces, given any monomorphism $\varphi\colon V\hookrightarrow W$ between vector spaces $V$ and $W$ and any subspace $U\subseteq V$ the induced map $\bar{\varphi}\colon V/U\to W/f(U)$ remains injective.
%	
%	The subspace $\H_{G,Y}^{n}=\cC\left( \binom{Y}{n} \right)^G$ is the eigenspace for the action of $h$ with weight (i.e.\ eigenvalue) $2n-\mu(Y)$. We consider the usual partial order $\leq$ on weights, where $\lambda_2\leq \lambda_1$ if $\lambda_1-\lambda_2\in 2\cdot \mN$, for $\lambda_1, \lambda_2 \in k$. In particular the minimal weight occurring is $-\mu(Y)$.
%	The action of $f$ takes a weight vector of weight $\lambda$, and creates a weight vector of weight $\lambda-2$, in particular $f\cdot \blank$ is nilpotent. We choose a  weight vector $v_0$ in the kernel of $f$, it generates the lowest weight Verma module $\cF_1=M^{-}(-\mu(Y))$, since the action of $e$ is injective. 
%	Next consider $\H_{G,Y}^{*}/\mathcal{F_1}$.
%	We proceed recursively by choosing a new vector $v_{r+1}$ in the kernel of $f\cdot \blank$ not contained in the sum of the previously chosen Verma modules.
\end{proof}
%\subsection{The $\sl_2$-Action and Stanley's Argument} %\label{subsection: sl2 and Stanley}
%We first explain Lemma \ref{lemma action on finite stuff} further in the case, where the group $G$ is the automorphism group of a homogeneous oligomorphic graph.
%\begin{prop} \label{proposition for homogenous graphs}
%	Let $\cS$ be a homogeneous oligomorphic graph with vertex set $X$. Consider the group $G=\Aut(\cS)$. The following facts hold about the action $\sl_2(k)\acts (k^2)_{\operatorname{fin}}^{\otimes X}$:
%	\begin{enumerate}
%		\item The space $(k^2)_{\operatorname{fin}}^{\otimes X}$ is the direct sum $\bigoplus_{n\in \mN} \cC(\binom{X}{n})$ considered as $G$-representation.
%		\item The subspace $\left(\cC(\binom{X}{n})\right)^{G}$ has a basis $v_{[\cs]}$ labeled by isomorphism classes $[\cs]$ of $n$-element subgraphs $\cs\subseteq \cS$.  
%		\item We have 
%		\[
%		e\cdot v_{[\cs]}=\!\sum_{[\cs']\in G\backslash \binom{X}{n+1}} \!\left| {\cs'}^{[\cs]} \right|\cdot  v_{[\cs']}, \quad f\cdot v_{[\cs]}=\!\sum_{[\cs']\in G\backslash \binom{X}{n-1}}\!\mu\left({(\cS\setminus \cs')}^{[\cs]} \right)\cdot  v_{[\cs']}.
%		\]
%	\end{enumerate}
%\end{prop}
%
%\subsection{Example I: The infinite symmetric group}
%We set $k=\C$,  $G=S_{\infty}$ and consider $Y=X=\N$. We fix $\lambda\in \C$ and consider the measure $\mu_{\lambda}$ on $S_{\infty}$ defined in Example \ref{example measure for S}.

\begin{beis} \label{example sl3}
	Let $r\in \N$. Then $\left((\C^r)_{\operatorname{fin}}^{\otimes \N}\right)^{S_{\infty}}$ is isomorphic as a vector space to a polynomial ring $\C[x_1,\ldots, x_{r-1}]$ in $(r-1)$-many variables.
	For $a_1,\ldots, a_{r-1}\in \mN$ the basis vector $x_1^{a_1}x_2^{a_2}\cdots x_{r-1}^{a_r}$ corresponds to the $S_{\infty}$-orbit on $\pa_r^{\operatorname{fin}}(\N)$ of the element $(\{1,\ldots, a_1\}, \{a_1+1,\ldots, a_2\}, \ldots, \{\sum_{i=1}^{r-2}a_i+1, \ldots, \sum_{i=1}^{r-1}a_i, \N\setminus \{1,\ldots, \sum_{i=1}^{r-1}a_i\}\})$.
	For instance, consider $r=3$ and write $x=x_1$ and $y=x_2$. Then 
	%The action of $\gl_r(k)$ is given by the formulas
	\[
					\left((\C^{3})_{\operatorname{fin}}^{\otimes \N}\right)^{G} = 	\cbox{\begin{tikzpicture}
							
							% --- Parameters ---
							% Simple roots for sl3 with 60° angle
							\pgfmathsetmacro{\sq}{sqrt(3)/2}
							\coordinate (alpha) at (1,0);  % horizontal direction
							\coordinate (beta) at (0.5,\sq);  % 60° up direction
							\coordinate (gamma) at (-0.5,\sq); % -120° direction (completes the triangle)
							
							% --- 1. Base point -3*omega1 ---
							\pgfmathsetmacro{\basex}{-2*1 + -1*0.5}
							\pgfmathsetmacro{\basey}{-2*0 + -1*\sq}
							\coordinate (Base) at (\basex,\basey);
							
							% --- 2. Create the foreground Pascal triangle coordinates ---
							% Level 1:
							\coordinate (X) at ($(Base)+(alpha)$);
							\coordinate (Y) at ($(Base)+(beta)$);
							
							% Level 2:
							\coordinate (XX) at ($(X)+(alpha)$);
							\coordinate (XY) at ($(X)+(beta)$);
							\coordinate (YY) at ($(Y)+(beta)$);
							
							% Level 3:
							\coordinate (XXX) at ($(XX)+(alpha)$);
							\coordinate (XXY) at ($(XX)+(beta)$);
							\coordinate (XYY) at ($(XY)+(beta)$);
							\coordinate (YYY) at ($(YY)+(beta)$);
							
							% Level 4:
							\coordinate (XXXX) at ($(XXX)+(alpha)$);
							\coordinate (XXXY) at ($(XXX)+(beta)$);
							\coordinate (XXYY) at ($(XXY)+(beta)$);
							\coordinate (XYYY) at ($(XYY)+(beta)$);
							\coordinate (YYYY) at ($(YYY)+(beta)$);
							
							% Level 5 (dots):
							\coordinate (XXXXX) at ($(XXXX)+(alpha)$);
							\coordinate (XXXXY) at ($(XXXX)+(beta)$);
							\coordinate (XXXYY) at ($(XXXY)+(beta)$);
							\coordinate (XXYYY) at ($(XXYY)+(beta)$);
							\coordinate (XYYYY) at ($(XYYY)+(beta)$);
							\coordinate (YYYYY) at ($(YYYY)+(beta)$);
							
							% --- 3. Draw background lattice - VERY tightly cropped ---
							% Only draw lattice points that are within the convex hull of the triangle plus a tiny margin
							\foreach \i in {-2,...,4} {
								\foreach \j in {-2,...,4} {
									\pgfmathsetmacro{\px}{\i*1 + \j*0.5}
									\pgfmathsetmacro{\py}{\i*0 + \j*\sq}
									
									% Only draw if point is very close to our triangle
									% Triangle roughly spans from x=-2.5 to x=3.5, y=-0.9 to y=2.6
									\pgfmathparse{(\px>-2.8) && (\px<3.8) && (\py>-1.2) && (\py<2.9)}
									\ifnum\pgfmathresult=1
									\coordinate (P) at (\px,\py);
									
									% Draw very short line segments
									\draw[gray!25, very thin] ($(P)-0.5*(alpha)$) -- ($(P)+0.5*(alpha)$);
									\draw[gray!25, very thin] ($(P)-0.5*(beta)$) -- ($(P)+0.5*(beta)$);
									\draw[gray!25, very thin] ($(P)-0.5*(gamma)$) -- ($(P)+0.5*(gamma)$);
									\fi
								}
							}
							
							% --- 4. Place the "1" node ---
							\node[font=\normalsize] at (Base) {1};
							
							% --- 5. Place all monomial nodes ---
							\node[font=\normalsize] at (X) {$y$};
							\node[font=\normalsize] at (Y) {$x$};
							
							\node[font=\normalsize] at (XX) {$y^2$};
							\node[font=\normalsize] at (XY) {$xy$};
							\node[font=\normalsize] at (YY) {$x^2$};
							
							\node[font=\normalsize] at (XXX) {$y^3$};
							\node[font=\normalsize] at (XXY) {$xy^{2}$};
							\node[font=\normalsize] at (XYY) {$x^{2}y$};
							\node[font=\normalsize] at (YYY) {$x^3$};
							
							\node[font=\normalsize] at (XXXX) {$y^4$};
							\node[font=\normalsize] at (XXXY) {$xy^3$};
							\node[font=\normalsize] at (XXYY) {$x^2y^2$};
							\node[font=\normalsize] at (XYYY) {$x^3y$};
							\node[font=\normalsize] at (YYYY) {$x^4$};
							
							% Small dots to show continuation
							\foreach \p in {XXXXX, XXXXY, XXXYY, XXYYY, XYYYY, YYYYY}
							\node[circle,fill=black,inner sep=0.6pt] at (\p) {};
						\end{tikzpicture}}
	\]
	
	In this picture the action of the Lie algebra $\gl_3(\C)$ creates/deletes variables according to the gray lines in the background. 
	%We encourage the reader to compute the $\gl_3(\C)$-action here. 
\end{beis}
\begin{rema}[Interpolation 2]
	Really one should think that the basis in Example \ref{example sl3} is not the basis of a polynomial ring in $r-1$ variables, but rather the basis of the `degree $\infty$'-part of a polynomial ring in $r$ variables, where the individual degrees of $x_1,\ldots, x_{r-1}$ are finite and the degree of $x_r$ is infinite. The representation $\left((\C^r)_{\operatorname{fin}}^{\otimes \N}\right)^{S_{\infty}}$ of $\gl_r(\C)$ should be thought of as an infinite symmetric power $\Sym^{\infty} \C^r$, i.e.\ some formal limit of $\Sym^l \C^r=L(l \cdot \omega_1)$ for $l\in \mN$, the irreducible representation of highest weight $l\cdot \omega_1$. This module is a lowest weight parabolic Verma module $M^{\mathfrak{p}, -}(-\lambda \omega_{n-1})=U(\gl_r)\otimes_{U(\mathfrak{p})} k_{-\lambda\omega_{n-1}}$. Here $\mathfrak{p}$ is the maximal parabolic Lie subalgebra given by block-lower-triangular matrices of the form $\mathfrak{p}=\begin{psmallmatrix}
		* & * & \cdots & * & 0 \\
		* & * & \cdots & * & 0 \\
		 & & \vdots & & 0 \\
		* & * & \cdots & * & 0 \\ 
		* & * & \cdots & * & * \\ 
	\end{psmallmatrix}\subseteq \gl_r(\C)$. As indicated before these are the parabolic Verma modules which are considered in \cite{aizenbud2015} (up to the subtlety that we consider lowest weight modules).
\end{rema}

 \subsection{Examples II: Disjoint unions and tensor products} \label{subsection disjoint unions}
  In this section we explain how taking tensor products interacts with the construction in this paper. We explain how the combinatorial model in Subsection \ref{subsection graphs} connects to Lemma \ref{lemma disjoint unions is oligomorphic} on products of oligomorphic groups, and how this connects to tensor products of Lie algebra representations.
 \begin{defi} \label{definition exterior direct sum}
 	Let $I_1$, $I_2$ be indexing sets. The \textit{(exterior) disjoint union} of a $I_1$-multi-relational graph $\cS_1$ and a $I_2$-multi-relational graph $\cS_2$ is the $(\{1,2\}\sqcup I_1\sqcup I_2)$-multi-relational graph $\cS_1\sqcup \cS_2$ with vertices $X_1\sqcup X_2$ and edges
 	$E_1=\{(x_1,x_1)\mid x_1\in X_1\}$, $E_2=\{(x_2,x_2)\mid x_2\in X_2\}$, $E_{i}\coloneqq E_{i}$ for $i\in I_1\sqcup I_2$.
 \end{defi}
 \begin{lemm} 
 	Assume we are in the setting of Definition \ref{definition exterior direct sum}. We have $\Aut(\cS_1\sqcup \cS_2)=\Aut(\cS_1)\times \Aut(\cS_2)$. If $\cS_1$ and $\cS_2$ are homogeneous oligomorphic then $\cS_1\sqcup \cS_2$ also is homogeneous oligomorphic. Finite subgraphs of $\cS_1\times \cS_2$ consists of pairs $(\cs_1, \cs_2)$ of subgraphs $\cs_1\subseteq \cS_1$, $\cs_2\subseteq \cS_2$. 
 \end{lemm}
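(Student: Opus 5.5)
We prove the three assertions in turn, working directly from the definitions in Subsection \ref{subsection graphs}. The main tool is the two loop relations $E_1,E_2$: they record on which side each vertex lies, and any graph map must preserve them.

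\emph{Automorphisms.} An automorphism $\varphi$ of $\cS_1\sqcup \cS_2$ preserves the relation $E_1=\{(x,x)\mid x\in X_1\}$ and its non-edges. Hence $x\in X_1$ if and only if $\varphi(x)\in X_1$, so $\varphi$ restricts to bijections $X_1\to X_1$ and $X_2\to X_2$. These restrictions preserve the relations $E_i$ for $i\in I_1$ and $i\in I_2$ respectively. By construction these relations live only on $X_1$, resp.\ $X_2$, so no edges join the two sides. Conversely, any pair $(\varphi_1,\varphi_2)\in\Aut(\cS_1)\times\Aut(\cS_2)$ glues to an automorphism of the disjoint union. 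This gives the identification $\Aut(\cS_1\sqcup\cS_2)=\Aut(\cS_1)\times\Aut(\cS_2)$ inside $\Sym(X_1\sqcup X_2)$.

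\emph{Finite subgraphs.} A finite induced subgraph with vertex set $S\subseteq X_1\sqcup X_2$ is determined by $S_1=S\cap X_1$ and $S_2=S\cap X_2$. Its edges are exactly the edges of the induced subgraphs $\cs_1\subseteq\cS_1$ on $S_1$ and $\cs_2\subseteq\cS_2$ on $S_2$, together with loops labelled $1$ or $2$. This yields the bijection with pairs $(\cs_1,\cs_2)$. The same computation on isomorphism classes shows that an abstract finite graph in $\age(\cS_1\sqcup\cS_2)$ is the same as an isomorphism class of pairs $([\cs_1],[\cs_2])$. Here the isomorphism must respect the labels $1,2$.

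\emph{Homogeneity and oligomorphy.} For homogeneity, take an isomorphism $f\colon\cs\to\cs'$ between finite subgraphs. Since $f$ preserves $E_1,E_2$, it splits as $f=f_1\sqcup f_2$ with $f_j\colon\cs_j\to\cs_j'$ isomorphisms of finite subgraphs of $\cS_j$. Homogeneity of $\cS_j$ extends $f_j$ to some $\tilde f_j\in\Aut(\cS_j)$, and $(\tilde f_1,\tilde f_2)$ is the required lift. For finiteness, the number of isomorphism classes of $n$-element subgraphs is $\sum_{i=0}^n c^{(1)}_i c^{(2)}_{n-i}<\infty$, where $c^{(j)}_i$ counts $i$-element subgraphs of $\cS_j$. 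Alternatively this follows from Lemma \ref{lemma disjoint unions is oligomorphic} together with Lemma \ref{lemma bijection isoclasses and orbits}.

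The only point needing care is the role of the loop relations. Without them, an automorphism or isomorphism could swap vertices between $X_1$ and $X_2$, for instance if $\cS_1\cong\cS_2$. That would break the identification of the automorphism group, though homogeneity might survive. The statement writes $\cS_1\times\cS_2$ for the finite-subgraph claim; I read this as $\cS_1\sqcup\cS_2$, which is what the splitting argument above covers.
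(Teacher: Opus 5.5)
Your proof is correct. For the automorphism group and for the description of finite subgraphs you argue as the paper does, directly from Definition \ref{definition exterior direct sum}, using that the loop relations $E_1,E_2$ fix the side of each vertex. The difference is in the second assertion. The paper only cites Lemma \ref{lemma disjoint unions is oligomorphic}, which says that $\Aut(\cS_1)\times\Aut(\cS_2)$ is an oligomorphic group. That gives the finiteness condition (2) in Definition \ref{definition homogeneuous oligomorphic}, because subsets in one orbit induce isomorphic subgraphs. It says nothing about homogeneity. Your step of splitting an isomorphism $f=f_1\sqcup f_2$ of finite subgraphs along $E_1,E_2$ and lifting each $f_j$ by homogeneity of $\cS_j$ supplies exactly this missing part. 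Your direct count $\sum_i c^{(1)}_i c^{(2)}_{n-i}$ also makes the finiteness condition self-contained, without passing through groups. So your route is a bit longer but complete, whereas the paper's one-line citation silently takes homogeneity for granted. You are also right to read $\cS_1\times\cS_2$ in the statement as a typo for $\cS_1\sqcup\cS_2$.
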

 \begin{proof}
 	The first and third assertion follow from Definition \ref{definition exterior direct sum}, the second assertion follows from Lemma \ref{lemma disjoint unions is oligomorphic}.
 \end{proof}
% \begin{beis}
% 	Fix $n\in \mN$. Consider the finite symmetric group $G_1=S_n=\Sym(\{1,\ldots, n\})$ and an infinite symmetric group $G_2=S_{\infty}=\Sym(\N)$. 
% \end{beis}
 \begin{beis} \label{defi r-colored finite sets}
 	Let $m\in \N$. We set $I=\{1,\ldots, r\}$ and consider the $I$-multi-relational graph $\N^{\sqcup r}\coloneqq \bigsqcup_{i=1}^{m}\N$, where $\N$ is viewed as $\emptyset$-multi-relational graph. We write the vertices $X_r$  $=\N\times \{1,\ldots, m\}$ and for $i\in \{1,\ldots, m\}$ the set of $i$-edges is $E_i=\{((n,i),(n,i)) \mid n,m\in \N \}$. One can imagine $\N^{\sqcup m}$ as $m$ differently color copies of $\N$. The shared color of two elements indicates that they have $i$-edges to themselves:
 	\begin{center}
 		\begin{tikzpicture}[
 			every node/.style={font=\small},
 			set/.style={
 				ellipse,
 				draw,
 				thick,
 				minimum width=2.6cm,
 				minimum height=5cm
 			}, scale=0.9
 			]
 			
 			% Abstand 1 -> 2 (kleiner)
 			\def\distA{3.2}
 			
 			% Abstand 2 -> 3 (groß, mit cdots)
 			\def\distB{4.5}
 			
 			% Beschriftung links vom Bild
 			\node at (-2.2,0) {$\cS_r\colon$};
 			
 			% -------- Erste Ellipse --------
 			\node[set, fill=blue!5] (A) at (0,0) {};
 			\node at (A.north) [above=4pt] {$\mathbb{N}\times\{1\}$};
 			
 			\foreach \y/\n in {1.8/1, 1.0/2, 0.2/3, -0.6/4, -1.4/5}
 			\node at ($(A.center)+(0,\y)$) {$({\n},1)$};
 			\node at ($(A.center)+(0,-2.0)$) {$\vdots$};
 			
 			% -------- Zweite Ellipse --------
 			\node[set, fill=green!5] (B) at (\distA,0) {};
 			\node at (B.north) [above=4pt] {$\mathbb{N}\times\{2\}$};
 			
 			\foreach \y/\n in {1.8/1, 1.0/2, 0.2/3, -0.6/4, -1.4/5}
 			\node at ($(B.center)+(0,\y)$) {$({\n},2)$};
 			\node at ($(B.center)+(0,-2.0)$) {$\vdots$};
 			
 			% -------- Punkte --------
 			\node at ({\distA + 0.5*\distB},0) {$\cdots$};
 			
 			% -------- Dritte Ellipse --------
 			\node[set, fill=red!5] (C) at ({\distA + \distB},0) {};
 			\node at (C.north) [above=4pt] {$\mathbb{N}\times\{r\}$};
 			
 			\foreach \y/\n in {1.8/1, 1.0/2, 0.2/3, -0.6/4, -1.4/5}
 			\node at ($(C.center)+(0,\y)$) {$({\n},r)$};
 			\node at ($(C.center)+(0,-2.0)$) {$\vdots$};
 		\end{tikzpicture}
 	\end{center}
 	The age of $\N^{\sqcup m}$ is the class of \textit{$m$-colored finite sets}. The number of $n$-element subgraphs of $\N^{\sqcup m}$ is  $\left|S_{\infty}^r\backslash\binom{\N\times\{1,\ldots, m\}}{n}\right|=\binom{n+m-1}{n}$, cf.\ Example \ref{example: polynomial ring generating function}.
 	%They can be described as pairs $\cs=(s, c\colon s\to \{1,\ldots, r\})$ of a finite set $s$ together with a coloring map $c$ of the elements of $s$ in $r$ colors.
 \end{beis}
 \begin{lemm} \label{lemma sum measure}
 	Let $\cS_1$, $\cS_2$ be homogeneous oligomorphic graphs. Let $G_1\subseteq \Sym(X_1)$ and $G_2\subseteq \Sym(X_2)$ be two oligomorphic permutation groups. Let $\nu_1$, $\nu_2$ be R-measures on $\cS_1$, $\cS_2$ with values in $k$. 
 	
 	\begin{enumerate}
 		\item \label{item 1 tensor products} There is a disjoint union measure $\mu_1\sqcup \mu_2$ on $\cS_1\sqcup \cS_2$ with values in $k$, which assigns to a pair $(\cs_1, \cs_2)$, where $\cs_1\subseteq \cS_1$, $\cs_2\subseteq \cS_2$ the value $\nu(\cs_1)+\nu(\cs_2)$.
 		\item \label{item 2 tensor products} 
 		Let $G_1=\Aut(\cS_1)$, $G_2=\Aut(\cS_2)$. As $\sl_2(k)$-representations we have an isomorphism
 		\[
 			\H_{G_1\times G_2, X_1\sqcup X_2}^{\star}\cong \H_{G_1, X_1}^{\star} \otimes_k \H_{G_2, X_2}^{\star},
 		\]
 		where the first action of $\sl_2(k)$-action uses the measure $\mu_{\nu_1\sqcup \nu_2}$ on $G_1\times G_2$, the second uses the measure $\mu_{\nu_1}$ on $G_1$, and the third the measure $\mu_{\nu_2}$ on $G_2$. 
 	\end{enumerate}
 \end{lemm}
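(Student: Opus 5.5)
Before starting I would flag what I take to be a typo in \eqref{item 1 tensor products}. The disjoint union R-measure should assign to $(\cs_1,\cs_2)$ the \emph{product} $\nu_1(\cs_1)\nu_2(\cs_2)$, not the sum. The sum already violates normalization, since $\nu(\emptyset,\emptyset)$ would be $2$. It also fails the amalgamation axiom and does not reproduce $r_{G_1\times G_2}=r_{G_1}r_{G_2}$ on the measure side. So I would prove the statement with $\nu(\cs_1,\cs_2)\coloneqq\nu_1(\cs_1)\nu_2(\cs_2)$, and write $\mu_{\nu_1\sqcup\nu_2}$ for the induced regular measure from Theorem \ref{theorem equivalence of measures}.

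\emph{Step 1: the R-measure.} Conditions \eqref{condition 1 nu} and \eqref{condition 2 nu} are immediate, and the value $\nu_1(\cs_1)\nu_2(\cs_2)$ is nonzero because both factors are. For \eqref{condition 3 nu} I would use the One-point lemma (Lemma \ref{technical lemma}), though the general case is equally easy. The loop relations $E_1,E_2$ force every embedding in $\cS_1\sqcup\cS_2$ to preserve the colour of vertices. Hence an amalgamation of $(\iota_1,\iota_2)$, with $\iota_1=(\iota_1^{(1)},\iota_1^{(2)})$ and similarly $\iota_2$, is exactly a pair consisting of an amalgamation in $\age(\cS_1)$ and one in $\age(\cS_2)$. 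The same holds for isomorphisms of amalgamations. The sum over isomorphism classes of amalgamations therefore factors as $\bigl(\sum_a\nu_1(\ca_a)\bigr)\bigl(\sum_b\nu_2(\ca_b)\bigr)$. Multiplying the two amalgamation identities for $\nu_1$ and $\nu_2$ then gives the identity for $\nu$.

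\emph{Step 2: the linear isomorphism.} As in Lemma \ref{lemma disjoint unions is oligomorphic}, we have $\binom{X_1\sqcup X_2}{n}\cong\bigsqcup_{i}\binom{X_1}{i}\times\binom{X_2}{n-i}$ as $G_1\times G_2$-sets. Since the group is a product, its orbits are exactly the products $\cO_1\times\cO_2$ of orbits. I would define the map $v_{\cO_1}\otimes v_{\cO_2}\mapsto v_{\cO_1\times\cO_2}$. It sends a basis to a basis and respects the grading, where $\H_{G_1,X_1}^{\star}\otimes_k\H_{G_2,X_2}^{\star}$ carries the total grading.

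\emph{Step 3: the actions agree.} I would show that under this map $x$ corresponds to $x\otimes1+1\otimes x$ for each $x\in\{e,f,h\}$.
\begin{itemize}
\item For $e$ this follows on basis vectors. Adding a point $y\in X_1\sqcup X_2$ to $S=S_1\sqcup S_2$ adds it either to the $X_1$-part or to the $X_2$-part.
\item For $h$, the complement splits as $(X_1\setminus S_1)\sqcup(X_2\setminus S_2)$, and $\mu$ is additive on disjoint unions. So it suffices to show that for a $\hat{G}$-set $Z$ on which $G_2$ (or an open subgroup of it) acts trivially, one has $\mu_{\nu_1\sqcup\nu_2}(Z)=\mu_{\nu_1}(Z)$, and symmetrically for $X_2$.
\item For $f$, I would compute the coefficient of $v_{S'}$ in $f\cdot v_{[\cO]}$ via the $\mu$-lift of Definition \ref{definition f mu}. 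The relevant support set consists of the points $y$ outside $S'$ with $S'\cup\{y\}$ in the orbit $\cO$. This set splits into its intersections with $X_1$ and with $X_2$, and each piece is stable under an open subgroup of the form $U_1\times U_2$. Using the same restriction property of $\mu_{\nu_1\sqcup\nu_2}$ again gives $f\otimes1+1\otimes f$.
\end{itemize}

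The main obstacle is this restriction property: that $\mu_{\nu_1\sqcup\nu_2}$ restricted to sets living in one factor agrees with $\mu_{\nu_i}$. I would prove it from the explicit formula in the proof of Theorem \ref{theorem equivalence of measures}. It suffices to check it on transitive sets $U(\underline{x},\underline{x}')/U(\underline{x},\underline{x}',\underline{z})$, where $\underline{x}\subseteq X_1$, $\underline{x}'\subseteq X_2$ and the new points $\underline{z}\subseteq X_1$. The value is $\nu(\cs\cup\cs_z,\cs')/\nu(\cs,\cs')=\nu_1(\cs\cup\cs_z)/\nu_1(\cs)$, because the $\nu_2(\cs')$ factors cancel. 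The finite-index correction terms also match, since finite-index subgroups split along the product. The remaining point is that $U(\underline{x},\underline{x}')\cong U_1(\underline{x})\times U_2(\underline{x}')$, so the $U$-orbits on $X_1$ coincide with the $U_1(\underline{x})$-orbits. This identifies both sides as $\mu_{\nu_1}$ of the same set.
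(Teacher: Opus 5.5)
Your proposal is correct and follows the same route as the paper: non-interacting amalgamations for (1), and for (2) the orbit bijection from Lemma \ref{lemma disjoint unions is oligomorphic} followed by a direct check of the actions. Your restriction property for $\mu_{\nu_1\sqcup\nu_2}$ spells out what the paper leaves as ``a direct computation'', and you are right that the value in (1) must be the product $\nu_1(\cs_1)\nu_2(\cs_2)$ rather than the sum, since the sum gives $\nu(\emptyset,\emptyset)=2$ and would also break the cancellation your restriction argument relies on.
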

 \begin{proof}
 	Part \ref{item 1 tensor products} follows, since amalgamations for subgraphs of $\cS_1$ and $\cS_2$ don't interact with each other. For \ref{item 2 tensor products} one uses the bijection from Lemma \ref{lemma disjoint unions is oligomorphic}. It gives an explicit $1\colon 1$-correspondence on bases and hence a vector space isomorphism. A direct computation shows that the $\sl_2(\C)$-actions agree.
 \end{proof}
% \begin{proof}
% 	The open subgroups in $G_1\times G_2$ are exactly of the form $U_1\times U_2$ where $U_1\subseteq G_1$, $U_2\subseteq G_2$ are open subgroups.
% \end{proof}
 \begin{beis}
 	Let $m\in \mN$. Consider the complete graph $K_m$, whose vertices are $\{1,\ldots, m\}$ and edges are $E=\{1,\ldots, m\}^2$. We consider the graph $K_m\sqcup \N$ as in Definition \ref{definition exterior direct sum} with vertex set $\{1,\ldots, m\} \sqcup \N$.
 	By Lemma \ref{lemma sum measure} the unique measure on $S_m=\Aut(K_m)$ (which counts embeddings, see Remark \ref{intuition for measure}) and the measure $\mu_{\lambda}$ for $\lambda\in k$ on $S_{\infty}$ combine to a measure on $S_m\times S_{\infty}$. Lemma \ref{lemma sum measure} gives us an isomorphism
 	\[ 
 	\H_{S_m\times S_{\infty}, \{1,\ldots, m\}\sqcup \N}^{\star}\cong \H_{S_{m}, \{1,\ldots, m\}}^{\star} \otimes_\C \H_{S_{\infty}, \N}^{\star}\cong L(m)\otimes_k M^{-}(-\lambda)
 	\] of $\sl_2(\C)$-representations. Here $L(m)=((\C^{2})^{\otimes m})^{S_m}$ is the $(m+1)$-dimensional irreducible representation of $\sl_2(\C)$, corresponding to the finite integer sequence $1,1,\ldots, 1$ of $(m+1)$-many $1$'s. The representation $L(m)\otimes_k M^{-}(-\lambda)$ has a finite filtration with subquotients
 	\[
 	M^{-}(-\lambda-m), \quad  M^{-}(-\lambda-m+2), \quad  \cdots, \quad  M^{-}(-\lambda+m).
 	\]
 	This corresponds to the sequence $1,2,\dots, m,m+1,m+1,m+1,\ldots$ decomposing into a sum of $(m+1)$ shifted sequences $1,1,1,1,\ldots$.
 \end{beis}
 \begin{beis}
 	Let $\lambda_1, \lambda_2\in \C$. Consider the corresponding R-measures $\nu_{\lambda_1}$ on $\N$ and $\nu_{\lambda_2}$ on $\N$. Then 
 	 \[
 	 \H_{S_{\infty}\times S_{\infty}, \N\sqcup \N}^{\star}\cong \H_{S_{\infty}, \N}^{\star} \otimes_\C \H_{S_{\infty}, \N}^{\star} \cong M^{-}(-\lambda_1)\otimes_\C M^{-}(-\lambda_1).
 	 \]
 	 This representation has an infinite filtration with subquotients
 	 \[
 	 	M^{-}(-\lambda_1-\lambda_2), \quad M^{-}(-\lambda_1-\lambda_2+2), \quad M^{-}(-\lambda_1-\lambda_2+4), \quad \cdots
 	 \]
 	 see \cite[\S4.1]{merceron2025} for more on this.
 	 The integer sequence in this case is $1,2,3,4,\ldots$, which decomposes as a sum of constant $1$-sequences.
 \end{beis}
 \subsection{Example III: Fibonacci numbers} \label{section Fibonacci}
 Throughout this section we fix a set $I$. Moreover we fix a homogeneous oligomorphic $I$-multi-relational graph $\cS$ with vertex set $X$.
%	
%	We the complete directed graph $K_m$ with vertices $\{1,\ldots, m\}$ and edges $E=\{1,\ldots, m\}^2$. We will consider a $\{1,2\}$-multi-relational graph built out of $K_m$ and $\mQ$. 
 \begin{defi}
 	 We consider the $(I\sqcup \{2\})$-multi-relational graph $\cS\times \mQ$ whose vertex set is $X\times \mQ$ and whose edge sets are
 	\begin{gather*}
 		\tilde{E}_i=\{ ((x_1,y),(x_2,y)) \mid (x_1,x_2)\in E_i\}\subseteq (X\times \mQ)^{2} \quad \text{for $i\in I$,}\\
 		E_{2}=\{(x_1,y_1),(x_2,y_2) \mid x_1,x_2\in X, \{y_1<y_2\} \subseteq \mQ\}.
 	\end{gather*}
 \end{defi}
 \begin{beis}
 	Consider the complete graph $\cS=K_2$. It consists of two vertices connected by a blue edges in both directions, which we image as one blue undirected edge.
 	The graph $K_2\times \mQ$ is an infinitely squeezed together sponge, built locally out of such pieces:
 	\[
% 	\begin{tikzpicture}[
% 		vertex/.style={circle,fill,inner sep=2pt},
% 		inneredge/.style={blue!60!black, line width=1.6pt},
% 		outeredge/.style={red!60!black, -{Latex[length=2mm]}, line width=0.8pt}
% 		]
% 		
% 		% Parameters
% 		\def\sep{2.2}      % horizontal spacing between fibers
% 		\def\v{0.9}        % vertical separation inside each K2
% 		\def\levels{-1,0,1,2}
% 		
% 		% Draw fibers (copies of K2)
% 		\foreach \i in \levels {
% 			
% 			% Vertices of K2
% 			\coordinate (T\i) at (\i*\sep, \v);
% 			\coordinate (B\i) at (\i*\sep,-\v);
% 			
% 			% Thick internal K2 edge
% 			\draw[inneredge] (T\i) -- (B\i);
% 			
% 			% Vertices
% 			\node[vertex] at (T\i) {};
% 			\node[vertex] at (B\i) {};
% 		}
% 		
% 		% Complete bipartite connections between consecutive fibers
% 		\foreach \i in {-1,0,1}{
% 			\pgfmathtruncatemacro{\next}{\i+1}
% 			\foreach \x in {T,B}{
% 				\foreach \y in {T,B}{
% 					\draw[outeredge] (\x\i) -- (\y\next);
% 				}
% 			}
% 		}
% 		
% 		% Continuation dots
% 		\node at (-3.2,0) {$\cdots$};
% 		\node at (6.6,0) {$\! \! \! \! \cdots$};
% 		
% 		% Labels
% 		\node at (-4.3,0) {$K_2\times \mQ: \qquad $};
% 		%\node[above] at (0, \v+0.6) {$\{1\}\times\mathbb{Q}$};
% 		%\node[below] at (0,-\v-0.6) {$\{-1\}\times\mathbb{Q}$};
% 		
% 	\end{tikzpicture}
\begin{tikzpicture}[
	vertex/.style={circle,fill,inner sep=2.2pt},
	inneredge/.style={green!80!black, line width=2pt},
	outeredge/.style={
		orange,
		line width=1.2pt,
		-{Latex[length=4mm, width=2.5mm]}
	}
	]
	
	% Parameters
	\def\sep{2.6}      % horizontal spacing between fibers
	\def\v{1.0}        % vertical separation inside each K2
	\def\levels{-1,0,1,2}
	
	% Draw fibers (copies of K2)
	\foreach \i in \levels {
		
		% Vertices
		\coordinate (T\i) at (\i*\sep, \v);
		\coordinate (B\i) at (\i*\sep,-\v);
		
		% Internal edge
		\draw[inneredge] (T\i) -- (B\i);
		
		% Nodes
		\node[vertex] at (T\i) {};
		\node[vertex] at (B\i) {};
	}
	
	% Bipartite connections
	\foreach \i in {-1,0,1}{
		\pgfmathtruncatemacro{\next}{\i+1}
		\foreach \x in {T,B}{
			\foreach \y in {T,B}{
				\draw[outeredge] (\x\i) -- (\y\next);
			}
		}
	}
	
	% Continuation dots
	\node at (-3.8,0) {$\cdots$};
	\node at (7.6,0) {$\cdots$};
	
	% Label
	\node at (-5.2,0) {$K_2 \times \mathbb{Q}:\quad$};
	
\end{tikzpicture}
 	\]
 	Here the \textcolor{green!80!black}{green unoriented edges} represent the $1$-edges (which exist in both directions, hence unoriented), while the \textcolor{orange}{orange directed edges} symbolize the $2$-edges coming from the total order on $\mQ$.
 \end{beis}
 We gather some elementary properties about $\cS\times \mQ$ in a proposition.
 \begin{prop}[Folklore] \label{proposition properties of some wreath products}
 	The graph $\cS\times \mQ$ satisfies the following properties:
 	\begin{enumerate}
 		\item Its automorphism group is isomorphic to the wreath product $\Aut(\cS) \wr \Aut(\mQ, <)$. 
 		\item It is homogeneous oligomorphic.
 		\item \label{number composition with certain summands} The isomorphism classes of subgraphs of $\cS\times \mQ$ correspond to $l$-tuples $([\cs_1],\ldots, [\cs_l])$ of any length $l\in \mN$ whose entries are isomorphism classes $[\cs_j]$ of finite non-empty subgraphs $\cs_j\subseteq \cS$, where $1\leq j\leq l$. The $2$-edges in $([\cs_1],\ldots, [\cs_l])$ declare $\cs_1<\cs_2<\ldots <\cs_l$.
 	\end{enumerate}
 \end{prop}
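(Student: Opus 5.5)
The plan is to treat the three items in order, and to reduce everything to a concrete description of the structure on $\cS\times\mQ$ in terms of the fibres $X\times\{q\}$. The basic observation, read off from the definition of the edge sets, is this. Two vertices $(x_1,y_1),(x_2,y_2)$ with $y_1\neq y_2$ are joined by exactly one $2$-edge, pointing towards the larger $\mQ$-coordinate, and by no $I$-edges. Vertices in the same fibre $X\times\{y\}$ are joined by no $2$-edge, and among themselves carry exactly the $I$-edges of $\cS$. Consequently the equivalence relation ``same $\mQ$-coordinate'' is definable in the graph as ``no $2$-edge in either direction''. This uses that $E_2$ consists only of pairs with $y_1<y_2$, so there are no $2$-loops. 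Fibres are therefore the equivalence classes, each fibre is a copy of $\cS$, and the $2$-edges induce a dense total order without endpoints on the set of fibres.

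For (1), an automorphism $\phi$ preserves this equivalence relation, so it permutes the fibres. Since it preserves $2$-edges, the induced permutation of $\mQ$ is order preserving. Restricted to a fibre, $\phi$ gives an isomorphism $X\times\{y\}\to X\times\{\sigma(y)\}$, that is, an element of $\Aut(\cS)$. Conversely, every pair consisting of $\sigma\in\Aut(\mQ,<)$ and a family $(g_y)_{y\in\mQ}\in\Aut(\cS)^{\mQ}$ defines an automorphism $(x,y)\mapsto(g_y x,\sigma(y))$. The map is a bijection and respects composition in the way the wreath product multiplication prescribes, which gives $\Aut(\cS)\wr\Aut(\mQ,<)$.

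For (3), a finite subgraph $\cs\subseteq\cS\times\mQ$ meets finitely many fibres, say at $y_1<\dots<y_l$. Its intersection with the fibre at $y_j$ is a nonempty finite subgraph $\cs_j\subseteq\cS$, and the $2$-edges say exactly $\cs_1<\dots<\cs_l$. An isomorphism of finite subgraphs preserves the definable fibre relation and the order on the classes. Hence it matches the $j$-th block with the $j$-th block and restricts to isomorphisms $\cs_j\cong\cs_j'$. Conversely, given such blockwise isomorphisms one assembles an isomorphism of the two subgraphs. So isomorphism classes correspond to tuples $([\cs_1],\dots,[\cs_l])$. Every such tuple is realised, by placing copies of the $\cs_j$ in the fibres over any $l$ increasing rationals.

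For (2), homogeneity is the step needing most care. Let $f\colon\cs\to\cs'$ be an isomorphism between finite subgraphs. By the previous paragraph, $f$ induces an order isomorphism $\{y_1<\dots<y_l\}\to\{y_1'<\dots<y_l'\}$ between the occupied $\mQ$-coordinates, together with isomorphisms $f_j\colon\cs_j\to\cs_j'$ of finite subgraphs of $\cS$. Homogeneity of $(\mQ,<)$ extends the order isomorphism to some $\sigma\in\Aut(\mQ,<)$. Homogeneity of $\cS$ extends each $f_j$ to some $g_{y_j}\in\Aut(\cS)$. We take $g_y=\id$ for all other $y$. The resulting wreath product element from (1) extends $f$.

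Oligomorphicity follows from (3) together with Corollary \ref{corollary oligomorphic}. An $n$-vertex subgraph corresponds to a composition $n=n_1+\dots+n_l$ together with a choice of isomorphism class of $n_j$-vertex subgraphs of $\cS$ for each part. There are finitely many compositions of $n$, and each part has finitely many choices because $\cS$ is oligomorphic. Hence there are finitely many isomorphism classes in each size, and $\Aut(\cS\times\mQ)$ is oligomorphic.
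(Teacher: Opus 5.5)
Your proposal is correct and follows the same route as the paper, whose proof is only the remark that all three properties are clear by inspection. Your argument is that inspection written out: you recover the fibres $X\times\{y\}$ as the classes of the relation ``no $2$-edge in either direction'', describe finite subgraphs blockwise, extend isomorphisms fibrewise using the homogeneity of $(\mQ,<)$ and of $\cS$, and count over compositions of $n$ for the finiteness condition.
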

 \begin{proof}
 	All properties are clear by inspection.
 \end{proof}
 \begin{beis} \label{example Fibonacci numbers}
 	The complete graph $K_2$ has two finite non-empty subgraphs, corresponding to numbers $1$ and $2$.
 	For $n\in \mN$ the isomorphism-classes of $n$-element subgraphs of $K_2\times \mQ$ correspond to tuples of $1$'s and $2$'s, whose entries sum up to $n$. The number of such tuples is exactly the $n$-th Fibonacci number with the usual conventions $F_0=1$, $F_1=1$, and $F_{n}=F_{n-1}+F_{n-2}$ for $n\geq 2$. 
 	Similarly one obtains for $K_3\times \mQ$ the Tribonacci numbers $1, 1, 2, 4, 7, 13,24,\ldots$, for $K_4\times \mQ$ the Tetranacci numbers
 	$1, 1, 2, 4, 8, 15, 29$, and so on.
 \end{beis}
 
 \begin{theo} \label{theorem measure on Fibonacci}
 	Let $k$ be a field. Let $\nu$ be an R-measure on $\age(\cS)$ with values in $k$. Then there is an R-measure $\nu_{\mQ}$ on $\cS\times \mQ$, which assigns to the graph $(\cs_1,\cs_2,\ldots, \cs_l)$, cf.\ Proposition \ref{proposition properties of some wreath products}, the value obtained as a product $
 		\nu_{\mQ}(\cs_1,\cs_2,\ldots, \cs_l)=(-1)^l \prod_{i=1}^{l} \nu(\cs_i)\in k\setminus\{0\}$.
 \end{theo}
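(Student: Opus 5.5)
Conditions \eqref{condition 1 nu} and \eqref{condition 2 nu} of Definition \ref{defi R-measure} are immediate from the formula: isomorphic finite subgraphs of $\cS\times\mQ$ have the same tuple $([\cs_1],\ldots,[\cs_l])$ by Proposition \ref{proposition properties of some wreath products}\eqref{number composition with certain summands}, and the empty graph is the tuple with $l=0$, which has value $1$. Nonvanishing holds because each $\nu(\cs_i)\neq 0$. The real work is condition \eqref{condition 3 nu}. By the one-point lemma (Lemma \ref{technical lemma}) it suffices to check it for one-point amalgamations. So the plan is to fix $\cs_0=(\cs_1,\ldots,\cs_l)$ and one-point extensions $\cs_0\cup\{p\}$ and $\cs_0\cup\{q\}$. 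We then classify all amalgamations $\cs_0\cup\{p,q\}$ up to isomorphism and verify
\[
\nu_{\mQ}(\cs_0\cup\{p\})\,\nu_{\mQ}(\cs_0\cup\{q\})=\nu_{\mQ}(\cs_0)\sum_{\ca}\nu_{\mQ}(\ca).
\]

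The key step is a case distinction by where $p$ and $q$ sit relative to the $\mQ$-levels of $\cs_0$. Call $p$ \emph{internal} if it lies on an existing level $j$, so $\cs_j$ becomes a one-point extension $\cs_j\cup\{p\}\subseteq\cS$. Call it \emph{new} if it creates a new singleton level $\{p\}$ inserted in a gap $g\in\{0,\ldots,l\}$. In the amalgamation the identifications $p=q$ are allowed only if the extensions are isomorphic over $\cs_0$; the other amalgamations are the two points placed disjointly. The cases are as follows.

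(a) $p$ and $q$ are internal on different levels $j\neq j'$. The amalgamation is unique and the identity factors immediately.

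(b) Both are internal on the same level $j$. The amalgamations of $\cs_0\cup\{p\}$ and $\cs_0\cup\{q\}$ are exactly the amalgamations in $\age(\cS)$ of $\cs_j\cup\{p\}$ and $\cs_j\cup\{q\}$ over $\cs_j$, because $p$ and $q$ must share the level. The identity therefore reduces to condition \eqref{condition 3 nu} for $\nu$, multiplied by the common factor $(-1)^l\prod_{i\neq j}\nu(\cs_i)$ twice.

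(c) $p$ is internal on level $j$ and $q$ is new. The amalgamation is unique and the factor $-\nu(\{q\})$ matches.

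(d) Both are new, in gaps $g$ and $g'$. If $g\neq g'$ the amalgamation is unique. If $g=g'$ the relative position of the two levels admits three options: $p$ below $q$, $q$ below $p$, or $p$ and $q$ on a common level. In the last option the common level is a two-element subgraph of $\cS$ amalgamating $\{p\}$ and $\{q\}$ over $\emptyset$. It can be either the identification $p=q$ (isomorphic over $\cs_0$, giving a single level $\{p\}$) or any two-point graph $\mathsf{y}$.

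Case (d) with $g=g'$ is the main obstacle. With $a=\nu(\{p\})$ and $b=\nu(\{q\})$, the left side divided by $\nu_{\mQ}(\cs_0)$ is $(-a)(-b)=ab$. The two ordered options contribute $ab+ab$, since two extra levels give sign $(+1)$. The common-level options contribute $-\sum_{\ca}\nu(\ca)$, summed over amalgamations of $\{p\},\{q\}$ over $\emptyset$ in $\age(\cS)$, and by condition \eqref{condition 3 nu} for $\nu$ this sum equals $ab$. The total is $2ab-ab=ab$, as required. This mirrors the heuristic $\mu(\mQ)=-1$ of Example \ref{example measure on Q}. I expect the delicate part to be the careful bookkeeping that these options exhaust the isomorphism classes of amalgamations and that none are double counted, especially when $p$ and $q$ are isomorphic extensions. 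Homogeneity of $\cS\times\mQ$ from Proposition \ref{proposition properties of some wreath products} guarantees that every such configuration is realized.
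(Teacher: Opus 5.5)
Your proof is correct and follows the paper's route: you reduce to one-point amalgamations via Lemma \ref{technical lemma}, then split according to whether the two new vertices land on the same existing level (the paper's case (1), your (b)), in the same gap as new levels (the paper's case (2), your (d) with $g=g'$), or at different positions (the paper's case (3), your (a), (c), and (d) with $g\neq g'$). Your explicit count $2ab-ab=ab$ in the same-gap case, which includes the two amalgamations with $p$ and $q$ on distinct new levels, spells out what the paper compresses into ``using the signs $(-1)^l$''; note that there you are really dividing both sides by $\nu_{\mQ}(\cs_0)^2$, not by $\nu_{\mQ}(\cs_0)$.
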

 \begin{proof}
 	The only non-trivial part is the multiplicativity of amalgamations condition in Definition \ref{defi R-measure}. The crucial idea of the proof is to use Lemma \ref{technical lemma} about one-point amalgamations. There are three kinds of one-point amalgamations. Either 
	\begin{enumerate}
		\item \label{case 1} both points are added to the same position, which already exists,
		\item \label{case 2} they are added to the same relative position in between or behind the sequence, which does not exist yet,
		\item \label{case 3} they are added to different positions.
	\end{enumerate}
 	In \ref{case 1} the possible amalgamations all come from $\cS$. In \ref{case 2} the possible amalgamations correspond to one-point amalgamations of the empty subgraph $\emptyset\subseteq \cS$. In both cases multiplicativity for $\nu_{\mQ}$ follows from multiplicativity of $\nu$ using the signs $(-1)^l$.
 	Case \ref{case 3} corresponds to adding a vertex at two different existing places, which gives a unique amalgamation.
 \end{proof}
% \begin{rema}
% 	The `limit' of the groups $\Aut(K_m\times \mQ)=S_m\wr \Aut(\mQ, <)$ as $m$ approaches $\infty$ is $S_{\infty}\wr \Aut(\mQ, <)=\Sym(\N\times \mQ)$. The analogue of Proposition \ref{proposition properties of some wreath products} part \ref{number composition with certain summands} is that orbits $(S_{\infty}\wr \Aut(\mQ, <)) \backslash \binom{\N\times \mQ}{n}$ correspond to all compositions of an integer $n\in \mN$ without restrictions. The number of those is $1$ for $n=0$ and $2^{n-1}$ for $n>0$.
% 	This is not to be confused with \Cref{beis: partitions}, where we considered $\Aut(\mQ, <) \wr S_{\infty}\subseteq \Sym(\mQ\times \N)$, which resulted in the numbers of integer partitions. We will consider a measure for $\Aut(\mQ, <) \wr S_{\infty}$ in the upcoming work \cite{MeyerWojciechowski2026}. 
% \end{rema}
 \begin{beis} \label{example Verma of fibonacci}
 	We consider the graph $\cS=K_2\times (\mQ,<)$. Let $G=S_2 \wr \Aut(\mQ, <)$. As usual, $\mathsf{H}_{G,X}^{\star}$
 	has a basis labeled by $G$-orbits. By Example \ref{example Fibonacci numbers} these correspond precisely to tuples of $1$'s, which we depict as single dots $\cbox{\seq{1}}$ and $2$'s, which we depict as \hphantom{,} $\cbox{\seq{2}}${\hphantom{,}}. Moreover $\dim_k \mathsf{H}_{G,X}^{n}=F_n$.
 	Consider Cameron's action of $\textcolor{red!60!black}{e}$ on $\mathsf{H}_{G,X}^{\star}$ in figure \ref{the action of e}.
 	The positive integers within each arrow count the number of embeddings between the indicated subgraphs. Note that the sum of all numbers going into a fixed finite graph $\cs$ is the number of its vertices $n=|\cs|$.
 	We consider the R-measure on $K_2\times \mQ$ induced by the unique measure on $K_2$ via Theorem \ref{theorem measure on Fibonacci}. 
 	It assigns to the one-point graph $\seq{1}$ the value $-2$, since $-2=(-1)\cdot 2$ and $2$ is the number of embeddings of the one-vertex subgraph into $K_2$. Moreover the the set of vertices $X$ is precisely the set of embeddings $X^{[\seq{1}]}=\{\seq{1} \hookrightarrow X\}$. This implies $\mu(X)=-2$. Hence the action of $h=\begin{psmallmatrix}
 		1 & 0 \\
 		0 & -1
 	\end{psmallmatrix}$ acts on $\cC\left(\binom{X}{n}\right)$ by the scalar $2n-(-2)=2n+2$. We get the following action of
 	$\textcolor{blue!60!black}{f}=\begin{psmallmatrix}
 					0 & 0 \\ 
 					1 & 0
 				\end{psmallmatrix}$:
 			\begin{figure}[ht]
 				\centering
 				
 				\begin{tikzpicture}[
 					element/.style={inner sep=2pt, outer sep=2pt},
 					>=Stealth,
 					arrow/.style={blue!60!black, ->, double=white, double distance=1.8pt, line width=0.6pt},yscale=1.1
 					]
 					
 					% Shift everything to start at x=0.5cm from left edge
 					\begin{scope}[xshift=0.5cm]
 						
 						% Column spacing: 2.3cm
 						% Increased vertical spacing for better number placement
 						
 						% Column 0: n=0 at x=0
 						\node[element] (n0) at (0,0) {$\emptyset$};
 						
 						% Column 1: n=1 at x=2.3
 						\node[element] (n1-1) at (2.3,0) {\seq{1}};
 						
 						% Column 2: n=2 at x=4.6 (increased vertical spacing: 1.5 instead of 1.2)
 						\node[element] (n2-1) at (4.6,1.5) {\seq{2}};
 						\node[element] (n2-2) at (4.6,-1.5) {\seq{1,1}};
 						
 						% Column 3: n=3 at x=6.9 (increased vertical spacing: 2.3 instead of 1.8)
 						\node[element] (n3-1) at (6.9,2.3) {\seq{2,1}};
 						\node[element] (n3-2) at (6.9,0) {\seq{1,2}};
 						\node[element] (n3-3) at (6.9,-2.3) {\seq{1,1,1}};
 						
 						% Column 4: n=4 at x=9.2 (increased vertical spacing: 3.0 instead of 2.4)
 						\node[element] (n4-1) at (9.2,3.0) {\seq{2,2}};
 						\node[element] (n4-2) at (9.2,1.5) {\seq{2,1,1}};
 						\node[element] (n4-3) at (9.2,0) {\seq{1,2,1}};
 						\node[element] (n4-4) at (9.2,-1.5) {\seq{1,1,2}};
 						\node[element] (n4-5) at (9.2,-3.0) {\seq{1,1,1,1}};
 						
 						% Draw all edges from right (larger n) to left (smaller n) - blue arrows
 						
 						% From n=1 to n=0
 						\draw[arrow] (n1-1) -- (n0);
 						
 						% From n=2 to n=1
 						\draw[arrow] (n2-1) -- (n1-1);
 						\draw[arrow] (n2-2) -- (n1-1);
 						
 						% From n=3 to n=2
 						\draw[arrow] (n3-1) -- (n2-1);
 						\draw[arrow] (n3-2) -- (n2-1);
 						\draw[arrow] (n3-1) -- (n2-2);
 						\draw[arrow] (n3-2) -- (n2-2);
 						\draw[arrow] (n3-3) -- (n2-2);
 						
 						% From n=4 to n=3
 						% [2,1,1] and [1,2,1] to [2,1]
 						\draw[arrow] (n4-2) -- (n3-1);
 						\draw[arrow] (n4-3) -- (n3-1);
 						% [2,2] to [2,1]
 						\draw[arrow] (n4-1) -- (n3-1);
 						
 						% [1,2,1] and [1,1,2] to [1,2]
 						\draw[arrow] (n4-3) -- (n3-2);
 						\draw[arrow] (n4-4) -- (n3-2);
 						% [2,2] to [1,2]
 						\draw[arrow] (n4-1) -- (n3-2);
 						
 						% [1,1,1,1] to [1,1,1]
 						\draw[arrow] (n4-5) -- (n3-3);
 						% [2,1,1], [1,2,1], [1,1,2] to [1,1,1]
 						\draw[arrow] (n4-2) -- (n3-3);
 						\draw[arrow] (n4-3) -- (n3-3);
 						\draw[arrow] (n4-4) -- (n3-3);
 						
 						% Add numbers placed directly on top of each arrow with white background
 						% n=1 -> n=0
 						\node[fill=white, inner sep=1pt, font=\small] at ($(n1-1)!0.5!(n0)$) {-2};
 						
 						% n=2 -> n=1
 						\node[fill=white, inner sep=1pt, font=\small] at ($(n2-1)!0.5!(n1-1)$) {1};
 						\node[fill=white, inner sep=1pt, font=\small] at ($(n2-2)!0.5!(n1-1)$) {-4};
 						
 						% n=3 -> n=2
 						\node[fill=white, inner sep=1pt, font=\small] at ($(n3-1)!0.5!(n2-1)$) {-2};
 						\node[fill=white, inner sep=1pt, font=\small] at ($(n3-2)!0.3!(n2-1)$) {-2};
 						\node[fill=white, inner sep=1pt, font=\small] at ($(n3-1)!0.3!(n2-2)$) {1};
 						\node[fill=white, inner sep=1pt, font=\small] at ($(n3-2)!0.5!(n2-2)$) {1};
 						\node[fill=white, inner sep=1pt, font=\small] at ($(n3-3)!0.5!(n2-2)$) {-6};
 						
 						% n=4 -> n=3
 						% [2,1] edges
 						\node[fill=white, inner sep=1pt, font=\small] at ($(n4-2)!0.6!(n3-1)$) {-4};
 						\node[fill=white, inner sep=1pt, font=\small] at ($(n4-3)!0.4!(n3-1)$) {-2};
 						\node[fill=white, inner sep=1pt, font=\small] at ($(n4-1)!0.5!(n3-1)$) {1};
 						
 						% [1,2] edges
 						\node[fill=white, inner sep=1pt, font=\small] at ($(n4-3)!0.58!(n3-2)$) {-2};
 						\node[fill=white, inner sep=1pt, font=\small] at ($(n4-4)!0.67!(n3-2)$) {-4};
 						\node[fill=white, inner sep=1pt, font=\small] at ($(n4-1)!0.3!(n3-2)$) {1};
 						
 						% [1,1,1] edges
 						\node[fill=white, inner sep=1pt, font=\small] at ($(n4-5)!0.5!(n3-3)$) {-8};
 						\node[fill=white, inner sep=1pt, font=\small] at ($(n4-2)!0.15!(n3-3)$) {1};
 						\node[fill=white, inner sep=1pt, font=\small] at ($(n4-3)!0.3!(n3-3)$) {1};
 						\node[fill=white, inner sep=1pt, font=\small] at ($(n4-4)!0.5!(n3-3)$) {1};
 						
 					\end{scope}
 					
 				\end{tikzpicture}
 				
 				\caption{Our action of $f\in \sl_2(\C)$ on $\H_{G,X}^{\star}$ for Fibonacci}
 				\label{fig:mytikz}
 			\end{figure}
 	Here the scalars `count' the number of embeddings between the complements. For instance, consider $\cbox{\seq{2,1}}$, which corresponds to the orbit of $\{(1,0), (2,0), (1,1)\}\in \binom{X}{3}$. We consider the complement of this subset in $X$ and decompose it into $U$-orbits for the open subgroup $U=U((1,0), (2,0), (1,1))$. In this way we get $4$ orbits:
 	\begin{center}
 		\begin{tikzpicture}[scale=1.2, yscale=-1,
 			cross/.style={draw=black, line width=0.8pt, minimum size=8pt, inner sep=0pt,
 				path picture={
 					\draw (path picture bounding box.south west) -- (path picture bounding box.north east);
 					\draw (path picture bounding box.south east) -- (path picture bounding box.north west);
 			}},
 			dot/.style={circle, fill, inner sep=1.5pt, outer sep=0pt}]
 			
 			% Draw the two horizontal lines
 			\draw[thick] (0,1) -- (8,1);  % top line
 			\draw[thick] (0,0) -- (8,0);  % bottom line
 			
 			% Labels on the left
 			\node at (-0.8,1) {$\mathbb{Q}\times\{1\}$};
 			\node at (-0.8,0) {$\mathbb{Q}\times\{2\}$};
 			
 			% Three excluded points as crosses
 			\node[cross] at (2,1) {};   % top left excluded point
 			\node[cross] at (5,1) {};   % top right excluded point  
 			\node[cross] at (2,0) {};   % bottom excluded point (directly under left top point)
 			
 			% Decompose the complement into 4 pieces:
 			
 			% ORANGE: left of the left excluded points on both lines
 			\draw[orange, thick, line width=1.2pt] (0,1) -- (2,1);
 			\draw[orange, thick, line width=1.2pt] (0,0) -- (2,0);
 			
 			% GREEN: between the left and right excluded points on top line AND directly below on bottom line
 			\draw[green, thick, line width=1.2pt] (2,1) -- (5,1);
 			\draw[green, thick, line width=1.2pt] (2,0) -- (5,0);
 			
 			% RED: the segment on bottom line between the left and right excluded points? No — let's clarify:
 			% Actually: the special point is at (5,0), so the opposite of the green part means:
 			% Green on top is (2,1) to (5,1)
 			% Green on bottom is (2,0) to (5,0) — but that's interrupted by the special point?
 			% Wait, the special point at (5,0) is NOT excluded, so the bottom green segment should stop at (5,0) then continue after?
 			% Let me reconsider: The "opposite side" means: wherever there's a colored segment on top, the corresponding bottom segment (vertically aligned) gets the same color.
 			
 			% So:
 			% ORANGE: left of x=2 on both lines
 			% GREEN: between x=2 and x=5 on both lines
 			% BLUE: right of x=5 on both lines
 			% RED: just the special point at (5,0)
 			
 			\draw[orange, thick, line width=1.2pt] (0,1) -- (2,1);
 			\draw[orange, thick, line width=1.2pt] (0,0) -- (2,0);
 			
 			\draw[green, thick, line width=1.2pt] (2,1) -- (5,1);
 			\draw[green, thick, line width=1.2pt] (2,0) -- (5,0);
 			
 			\draw[blue, thick, line width=1.2pt] (5,1) -- (8,1);
 			\draw[blue, thick, line width=1.2pt] (5,0) -- (8,0);
 			
 			% Mark the special point (already at (5,0) — this is now the endpoint of green on bottom and start of blue)
 			\node[dot, fill=red, draw=black, thick, inner sep=3pt, minimum size=6pt] at (5,0) {};
 			%\node at (5,-0.4) {\small\textbf{special point}};
 			
 		\end{tikzpicture}
 	\end{center}
 	Here they are \textcolor{orange}{orange}, \textcolor{green}{green}, \textcolor{red}{red}, and \textcolor{blue}{blue}. If we add a vertex from \textcolor{green}{green} or \textcolor{blue}{blue} we create graphs isomorphic to $\cbox{\seq{2,1,1}}$ . The coefficient of the action of $f$ is $-4$, since each of the measures of the double lines is $-2$. If we add a vertex from \textcolor{red}{red}, we create $\cbox{\seq{2,2}}$. Since there is only  one vertex, the coefficient is $1$. If we add a vertex from \textcolor{orange}{orange} we create a graph isomorphic to $\cbox{\seq{1,2,1}}$, hence the scalar in the action of $f$ is $-2$.
 \end{beis}
 \subsection*{A question}
 One natural question that remains is the following:
 \begin{center}
 	\textit{When is the rank generating function of an oligomorphic group (in particular a finite permutation groups) log-concave?}
 \end{center}
  This does not hold in general (even for permutation groups of finite sets). There is algebraic geometry flavored theory for matroids, see \cite{adiprasitohuhkatz18}. On the other hand there is work on log-concavity for characters of Verma modules in \cite{kharemathernedizier25}. From the perspective of this paper we see this as hints that there should exist a natural class of oligomorphic permutation groups, which give log-concave sequences coming from an analogue of Hodge theory. How does this class look like? Another hint in this geometric direction is that in \cite{falquetheiry18}, it is shown that if the number of $G$-orbits on $\binom{X}{n}$ is bounded by a polynomial in $n$, then the orbit algebra $\H_{G,X}^{\star}$ is Cohen--Macaulay.

\printbibliography

@misc{R,
	Author = {R. Rouquier},
	Note = {arXiv:0812.5023},
	Title = {2-{K}ac-{M}oody algebras},
	Year = {2008}}

@misc{harmansnowden22,
 author = {Harman, Nate and Snowden, Andrew},
 title = {Oligomorphic groups and tensor categories},
 year = {2022},
 eprint={2204.04526},
archivePrefix={arXiv},
primaryClass={math.RT},
note={Preprint}
}

@unpublished{reinernote,
    author = {Morales, Alejandro H. and Reiner, Victor and Villamizar, Nelly},
    title = {Reflection Groups and Enumeration},
    note = {Lecture notes},
    url = {https://www-users.cse.umn.edu/~reiner/Talks/Reflection_groups_and_enumeration.pdf}
}

@book {cameron90,
    AUTHOR = {Cameron, Peter J.},
     TITLE = {Oligomorphic permutation groups},
    SERIES = {London Mathematical Society Lecture Note Series},
    VOLUME = {152},
 PUBLISHER = {Cambridge University Press, Cambridge},
      YEAR = {1990},
     PAGES = {viii+160},
      %%ISBN = {0-521-38836-8},
   %%MRCLASS = {20B07 (03C35 03C45 03C60 20-02 20B35)},
  %%MRNUMBER = {1066691},
%%MRREVIEWER = {Anand\ Pillay},
       DOI = {10.1017/CBO9780511549809}
}

@article {harmansnowden24delannoy,
    AUTHOR = {Harman, Nate and Snowden, Andrew and Snyder, Noah},
     TITLE = {The {D}elannoy category},
   JOURNAL = {Duke Math. J.},
  FJOURNAL = {Duke Mathematical Journal},
    VOLUME = {173},
      YEAR = {2024},
    NUMBER = {16},
     PAGES = {3219--3291},
      %%ISSN = {0012-7094,1547-7398},
   MRCLASS = {20C99 (18M25)},
  MRNUMBER = {4846194},
MRREVIEWER = {C.\ N\u ast\u asescu},
       DOI = {10.1215/00127094-2024-0012},
}

@misc{merceron2025,
      title={Tensor Products with Verma Module and Restriction to Parabolic Subalgebra}, 
      author={Antoine Merceron},
      year={2025},
      eprint={2509.14220},
      archivePrefix={arXiv},
      primaryClass={math.RT},
      %%url={https://arxiv.org/abs/2509.14220},
      note={Preprint}
}

@article {aizenbud2015,
    AUTHOR = {Aizenbud, Inna Entova},
     TITLE = {Schur-{W}eyl duality for {D}eligne categories},
   JOURNAL = {Int. Math. Res. Not. IMRN},
  FJOURNAL = {International Mathematics Research Notices. IMRN},
      YEAR = {2015},
    NUMBER = {18},
     PAGES = {8959--9060},
      %%ISSN = {1073-7928,1687-0247},
   MRCLASS = {18D10},
  MRNUMBER = {3417700},
MRREVIEWER = {Eric\ C.\ Rowell},
       DOI = {10.1093/imrn/rnu214},
       %%URL = {https://doi.org/10.1093/imrn/rnu214},
}

@article{stanley80a,
 author = {Stanley, Richard P.},
 title = {Weyl groups, the hard {Lefschetz} theorem, and the {Sperner} property},
 fjournal = {SIAM Journal on Algebraic and Discrete Methods},
 journal = {SIAM J. Algebraic Discrete Methods},
 %%issn = {0196-5212},
 volume = {1},
 pages = {168--184},
 year = {1980},
 doi = {10.1137/0601021},
}

@misc{stanley80b,
 author = {Stanley, Richard P.},
 title = {Unimodal sequences arising from {Lie} algebras},
 year = {1980},
 %%language = {English},
 howpublished = {Combinatorics, representation theory and statistical methods in groups, {Young} {Day} {Proc}., {Lect}. {Notes} pure appl. {Math}., {Vol}. 57, 127-136 (1980).},
 %%keywords = {05A10,17B10},
 %%zbMATH = {3704591},
 %%Zbl = {0451.05004}
}

@article{cameron00,
 author = {Cameron, Peter J.},
 title = {Sequences realized by oligomorphic permutation groups},
 fjournal = {Journal of Integer Sequences},
 journal = {J. Integer Seq.},
 %%issn = {1530-7638},
 volume = {3},
 number = {1},
 pages = {art. 00.1.5, no pag.},
 year = {2000},
 %%language = {English},
 %%keywords = {20B07,03E15},
 %%zbMATH = {1522338},
 %%Zbl = {0980.20001}
}

@misc{kharemathernedizier25,
      title={Log-concavity of characters of parabolic {Verma} modules, and of restricted {Kostant} partition functions}, 
      author={Khare, Apoorva and Matherne, Jacob P. and Dizier, Avery St.},
      year={2025},
      eprint={2504.01623},
      archivePrefix={arXiv},
      primaryClass={math.RT},
      note={Preprint}
}

@misc{MeyerWojciechowski2026,
  title        = {On the finite length problem for vector spaces from oligomorphic groups},
  author       = {Meyer, Sebastian and Wojciechowski, Zbigniew},
  note         = {In Preparation}
}

@article{tsankov12,
 author = {Tsankov, Todor},
 title = {Unitary representations of oligomorphic groups},
 fjournal = {Geometric and Functional Analysis. GAFA},
 journal = {Geom. Funct. Anal.},
 %%issn = {1016-443X},
 volume = {22},
 number = {2},
 pages = {528--555},
 year = {2012},
 doi = {10.1007/s00039-012-0156-9},
}

@misc{harmansnowdensnyder23,
author = {Harman, Nate and Snowden, Andrew and Snyder, Noah},
 title = {The circular {Delannoy} category},
 year = {2023},
 eprint={2303.10814},
archivePrefix={arXiv},
primaryClass={math.RT},
note={Preprint}
}

@misc{krizsophietree,
  title={The Delannoy tree category},
  author={Kriz, Sophie},
  url={https://krizsophie.github.io/DelannoyTrees24098.pdf},
  urldate = {2026-15-03},
  note={Preprint}
}

@misc{krizsophiequantum,
  title={Quantum Dellannoy categories},
  author={Kriz, Sophie},
  url={https://krizsophie.github.io/QuantumDelannoyCategory23111.pdf},
  urldate = {2026-15-03},
  note={Preprint}
}

@misc{snowden24,
 author = {Snowden, Andrew},
 title = {The thirty-seven measures on permutations},
 year = {2024},
 eprint={2404.08775},
archivePrefix={arXiv},
primaryClass={math.{CO}},
 note={Preprint}
}

@misc{canrud26,
 author = {Can, Thanh and R{\"u}d, Thomas},
 title = {Measures on {Cameron}'s treelike classes and applications to tensor categories},
 year = {2026},
 eprint={2603.03690},
 archivePrefix={arXiv},
primaryClass={math.{CO}},
 note={Preprint}
}

@misc{harmansnowdeninterpolation,
 author = {Harman, Nate and Snowden, Andrew},
 title = {Classical interpolation categories},
 year = {2025},
 eprint={2507.12216},
 archivePrefix={arXiv},
primaryClass={math.{RT}}
}

@incollection{deligne07,
 author = {Deligne, Pierre},
 title = {The category of representations of the symmetric group {{\(S_t\)}} when {{\(t\)}} is not a natural number.},
 booktitle = {Algebraic groups and homogeneous spaces. Proceedings of the international colloquium, Mumbai, India, January 6--14, 2004},
 %isbn = {978-81-7319-802-1},
 pages = {209--273},
 year = {2007},
 publisher = {New Delhi: Narosa Publishing House/Published for the Tata Institute of Fundamental Research},
 language = {French},
 %%keywords = {20C30,18D10,05E10,20G05,20C08},
 %%zbMATH = {5238760},
 %%Zbl = {1165.20300}
}

@book {mazorchuk10,
    AUTHOR = {Mazorchuk, Volodymyr},
     TITLE = {Lectures on {$\sl_2(\C)$}-modules},
 PUBLISHER = {Imperial College Press, London},
      YEAR = {2010},
     PAGES = {x+263},
      %%ISBN = {978-1-84816-517-5; 1-84816-517-X},
   MRCLASS = {17B10 (16G20 17B20 17B35 17B55)},
  MRNUMBER = {2567743},
MRREVIEWER = {Murray\ R.\ Bremner},
}

@article {falquetheiry18,
    AUTHOR = {Falque, Justine and Thi\'ery, Nicolas M.},
     TITLE = {The orbit algebra of an oligomorphic permutation group with
              polynomial profile is {C}ohen-{M}acaulay},
   JOURNAL = {S\'em. Lothar. Combin.},
  FJOURNAL = {S\'eminaire Lotharingien de Combinatoire},
    VOLUME = {80B},
      YEAR = {2018},
     PAGES = {Art. 83, 12},
      %%ISSN = {1286-4889},
   %%MRCLASS = {20C20 (05E18)},
  %%MRNUMBER = {3940658},
}

@article{adiprasitohuhkatz18,
 author = {Adiprasito, Karim and Huh, June and Katz, Eric},
 title = {Hodge theory for combinatorial geometries},
 fjournal = {Annals of Mathematics. Second Series},
 journal = {Ann. Math. (2)},
 %issn = {0003-486X},
 volume = {188},
 number = {2},
 pages = {381--452},
 year = {2018},
 language = {English},
 doi = {10.4007/annals.2018.188.2.1},
 %keywords = {14T15,05A99,05E16,14F99},
 %zbMATH = {6921184},
 %Zbl = {1442.14194}
}
\end{document}